\newtheorem{theorem}{Theorem}[section]
\newtheorem{lemma}[theorem]{Lemma}
\newtheorem{prop}[theorem]{Proposition}
\theoremstyle{definition}
\newtheorem{definition}[theorem]{Definition}
\theoremstyle{remark}
\newtheorem{remark}[theorem]{Remark}
\numberwithin{equation}{section}
\newtheorem{thm}{Theorem}[section]
\theoremstyle{definition}
\newtheorem{defn}[thm]{Definition}
\newcommand{\R}{\mathbb R}
\newcommand{\C}{\mathbb C}
\newcommand{\B}{\mathcal B}
\date{\today}
\DeclareMathOperator{\re}{Re}			
\DeclareMathOperator{\im}{Im}			
\title[Bubble tree convergence for harmonic maps into metric spaces]{Bubble tree convergence for harmonic maps into compact locally CAT(1) spaces}
\thanks{CB was supported in part by NSF grant DMS-1609198.}
\author[Breiner]{Christine Breiner}
\address{Department of Mathematics \\
                 Fordham University \\
                 Bronx, NY  10458}
\email{cbreiner@fordham.edu}
\author[Lakzian]{Sajjad Lakzian}
\address{Department of Mathematics \\
                 Fordham University \\
                 Bronx, NY  10458}
                 \email{slakzian@fordham.edu}
\begin{document}
\maketitle
\begin{abstract}
We determine bubble tree convergence for a sequence of harmonic maps, with uniform energy bounds, from a compact Riemann surface into a compact locally CAT(1) space. In particular, we demonstrate energy quantization and the no-neck property for such a sequence. In the smooth setting, Jost \cite{jost} and Parker \cite{Parker} respectively established these results by exploiting now classical arguments for harmonic maps. Our work demonstrates that these results can be reinterpreted geometrically. In the absence of a PDE, we take advantage of the local convexity properties of the target space. Included in this paper are an $\epsilon$-regularity theorem, an energy gap theorem, and a removable singularity theorem for harmonic maps for harmonic maps into metric spaces with upper curvature bounds. We also prove an isoperimetric inequality for conformal harmonic maps with small image. 
\end{abstract}

In pioneering work, Sacks and Uhlenbeck \cite{SU} determined a priori estimates for critical points to a perturbed energy functional to prove the existence of minimal two-spheres in compact Riemannian manifolds. Recently, Breiner et al. \cite{Banff2} extended this result to the singular setting. Lacking a PDE, they instead used the local convexity of the target space (a locally CAT(1) space) to determine a discrete harmonic map heat flow or harmonic replacement process. Given a finite energy map $\phi:(M^2,g) \to (X,d)$, harmonic replacement yields either a harmonic map $u:(M^2,g) \to (X,d)$, homotopic to $\phi$, or a conformal harmonic map $v:(\mathbb S^2,g_0) \to (X,d)$.  
The second case occurs when the modulus of continuity for the sequence of replacement maps blows up at a point. Renormalizing the domain on the scale of the blow up gives a sequence of finite energy maps with uniform modulus of continuity, which converge in $C^0$ uniformly on compact sets to the map $v:\mathbb C \to X$. By proving a removable singularity theorem for \emph{conformal} harmonic maps, in \cite{Banff2} they concluded that $v$ is harmonic on $\mathbb S^2$. 

In the smooth setting, Sacks and Uhlenbeck \cite{SU} proved a removable singularity theorem for harmonic maps; coupling this result with their a priori estimates, any bounded sequence of critical maps has a subsequence $\{u_k\}$ which converges, away from some finite number of points, to a harmonic map $u$. At these points, renormalizing the domain by the norm of the gradient produced a ``bubble'', a harmonic map $\omega_i$ from $\mathbb S^2$. 
Jost \cite{jost} demonstrated that the energy in this process is quantized; that is, 
\[
\lim_{k \to \infty} E[u_k] = E[u] + \sum_{i=1}^\ell E[\omega_i].
\]
Parker \cite{Parker} (see also \cite{ParkerWolfson}) refined the renormalization technique of \cite{SU} to produce a full accounting of the $C^0$ limit picture for harmonic maps to smooth targets. Further, he provided a new proof of energy quantization along with the first proof of the so called ``no-neck property". 

We demonstrate that the energy quantization and no-neck property determined for harmonic maps to smooth manifolds is also valid for harmonic maps into metric spaces with upper curvature bounds. Unlike in the smooth setting, the best convergence one can hope for is $C^0$ uniform convergence. While the work of \cite{Parker} relied on $C^1$ convergence, we demonstrate that the weaker convergence does not impede energy quantization or the no-neck property.

\begin{theorem}\label{MAIN}
Let $u_k:(M^2,g) \to (X,d)$ be a sequence of finite energy harmonic maps from a compact Riemann surface $(M^2,g)$ to a compact locally CAT(1) space $(X,d)$. If $E[u_k] \leq \Lambda <\infty$ then there exists a subsequence again denoted by $\{u_k\}$ and a bubble tower domain $T$ such that the bubble tree maps $\overline {\underline u}_{k,I}:T \to X$ converge to a harmonic bubble tree map $\overline{\underline u}_I:T \to X$ in $C^0$ uniformly on $T$ and $E[\overline {\underline u}_{k,I},T] \to E[\overline {\underline u}_{I},T]$. In addition
\begin{enumerate}
\item energy is quantized: 
\[
\lim_{k \to \infty} E[u_k,M] = E[\overline {\underline u}_{I},T].
\]
\item the no-neck property holds: at each bubble point $y_{i_1\dots i_n}$, the image of the map $\overline {\underline u}_{i_1 \dots i_{n-1}}$ at vertex $i_1 \dots i_n$ and the image of the map $\overline{\underline u}_{i_1 \dots i_n}$ meet. That is $\overline{\underline u}_{i_1\dots i_{n-1}}(y_{i_1 \dots i_n}) = \overline {\underline u}_{i_1 \dots i_n}(p^+)$.
\end{enumerate}
\end{theorem}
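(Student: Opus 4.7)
The plan is to follow Parker's iterative rescaling scheme but to replace every PDE-based argument by the intrinsic geometric tools developed earlier in the paper: the $\epsilon$-regularity theorem, the energy gap, the removable singularity theorem for conformal harmonic maps, and the isoperimetric inequality for conformal harmonic maps with small image.

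For the base of the tree, I would let $\mathcal{S}\subset M$ be the set of points at which more than $\epsilon_0$ of energy concentrates in arbitrarily small neighborhoods along a subsequence; by $\epsilon$-regularity, $\mathcal{S}$ is finite with $|\mathcal{S}|\le\Lambda/\epsilon_0$. Away from $\mathcal{S}$, $\epsilon$-regularity yields a uniform local modulus of continuity, so a diagonal subsequence of $\{u_k\}$ converges in $C^0_{\mathrm{loc}}$ to a harmonic map on $M\setminus\mathcal{S}$, which the removable singularity theorem extends to a harmonic map $u$ on all of $M$. At each $y\in\mathcal{S}$ I would perform a conformal rescaling $u_k^y(z):=u_k(y+\lambda_{k,y}z)$ with $\lambda_{k,y}\to 0$ chosen so that a prescribed fraction of the concentrating energy lies on the unit disk. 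Passing to a subsequence, and using $\epsilon$-regularity together with removable singularities (at $\infty$ and at any new concentration points), I extract a harmonic bubble $\omega^y\colon \mathbb S^2\to X$; by the energy gap it carries at least $\epsilon_\ast$ energy. Iterating at every new concentration point produces the bubble tower domain $T$, and the recursion terminates after finitely many steps because every bubble consumes at least $\epsilon_\ast$ while $E[u_k]\le\Lambda$.

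What remains is the neck analysis. At each bubble point $y_{i_1\dots i_n}$ there is an annulus $A_k=A(r_k,R_k)$ between the parent and child scales on which no energy concentrates. Energy quantization $\lim_k E[u_k,M]=E[\overline{\underline u}_I,T]$ is equivalent to $E[u_k,A_k]\to 0$; this I would obtain by decomposing $A_k$ into dyadic subannuli, each carrying less than $\epsilon_0$, and running a three-annulus argument driven by the monotonicity that underlies $\epsilon$-regularity in the CAT(1) setting, yielding geometric decay of energy between consecutive subannuli.

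The main obstacle, and where the CAT(1) setting departs most sharply from Parker's, is the no-neck property $\overline{\underline u}_{i_1\dots i_{n-1}}(y_{i_1\dots i_n})=\overline{\underline u}_{i_1\dots i_n}(p^+)$, which reduces to showing $\diam u_k(A_k)\to 0$. In Parker's proof, $C^1$ convergence bounds the length of radial image curves directly; we have only $C^0$, so I would instead use the isoperimetric inequality for small-image conformal harmonic maps. On each dyadic subannulus, $\epsilon$-regularity confines $u_k$ to a small convex ball, and the isoperimetric inequality bounds the diameter of the image of that subannulus by a power of its energy. The tension is that the number of subannuli grows like $\log(R_k/r_k)$, whereas the per-subannulus diameter estimates must decay fast enough that the total diameter telescopes to zero. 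Balancing this logarithmic proliferation of subannuli against the geometric decay furnished by the isoperimetric inequality, and verifying at every scale that the small-image hypothesis genuinely holds, is the technical heart of the argument.
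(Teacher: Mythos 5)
Your construction of the base map, the bubbles, and the bubble tower matches the paper's: concentration points via $\epsilon$-regularity, removable singularities, rescaling with a renormalization condition, and termination via the energy gap. The genuine gap is in the neck analysis, which you correctly identify as the heart of the theorem but do not close. First, for energy quantization you invoke a ``three-annulus argument'' with geometric decay of energy between dyadic subannuli; no such lemma is established in this setting, and the standard proofs of three-annulus decay for harmonic maps rest on Fourier/eigenfunction expansions or Pohozaev-type identities --- exactly the PDE machinery unavailable for metric-space targets. Second, for the no-neck property you apply the isoperimetric inequality on each dyadic subannulus and must then sum roughly $\log(R_k/r_k)$ diameter contributions; you explicitly flag the tension between this logarithmic proliferation and the per-annulus decay and leave it unresolved. (You also slightly misstate what the isoperimetric inequality delivers: it bounds area by boundary length squared, not diameter by energy; converting small area into small diameter requires the monotonicity formula.)

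The paper avoids both problems by never decomposing the neck. It forms the conformal suspension $\tilde u_k=(u_k,\nu_k)\colon D\to X\times\mathbb C$ (Lemma \ref{lem:conformal-suspension-2}), whose extra energy on the neck is $O(\epsilon_k^2)$, and then uses the vanishing of energy on the transition regions (item (1) of Lemma \ref{BT2} and item (10) of Lemma \ref{BT3}) together with a mean-value/pigeonhole choice of radii $r_k\in[\epsilon_k/4,\epsilon_k/2]$ and $s_k\in[k\lambda_k,2k\lambda_k]$ to produce the two boundary circles of the neck annulus $A_k'$ with image length tending to zero (Lemma \ref{lem:length}). The monotonicity formula, combined with the a priori bound $\mathrm{Area}[\tilde u_k(A_k')]\le C_R$ built into the renormalization constant \eqref{eq:CR}, forces $\tilde u_k(A_k')$ into a single ball of radius $\tau(X)$, so Theorem \ref{thm:isoperimetric} applies \emph{once} to the entire neck and gives $E[u_k,A_k']\le 2\,\mathrm{Area}[\tilde u_k(A_k')]\le \frac{27\pi}{2}\,\mathrm{length}^2[\tilde u_k(\partial A_k')]\to 0$; this is energy quantization. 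Rerunning the monotonicity argument with the now-vanishing area bound yields $\diam u_k(A_k')\to 0$, which is the no-neck property. To repair your proposal you should replace the dyadic scheme by this single global application of the isoperimetric inequality; the essential ingredients you are missing are the conformal suspension and the selection of boundary circles with small image length.
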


\begin{remark}
The bubble tree domain, the maps $\overline{\underline u}_{k,I}$, and their limit maps are defined in Section \ref{BTconst}. Also, $p^+\in \mathbb S^2$ denotes the north pole.
\end{remark}

For those unfamiliar with the bubble tree construction, we provide some context. The essence of the theorem is that the sequence $u_k$ may develop energy concentration points in the limit. Away from the concentration points, the maps converge in $C^0$ uniformly to a harmonic map $u:M \to X$. By rescaling appropriately at each of the concentration points $x_i$, we produce sequences of maps $\overline u_{k,i}$ with domains exhausting $\mathbb S^2$ and which themselves converge, away from some finite number of concentration points, to a conformal harmonic map $\overline u_i:\mathbb S^2 \to X$. The process iterates at these new concentration points to produce bubbles on bubbles. The energy quantization and the no-neck property imply that all of the essential information is contained in the limit base map and bubbles. This follows since, on the annular domain between the scale on which the $u_k$'s converge and the scale on which the $\overline u_{k,i}$'s converge, the energy and the diameter both tend to zero in the limit.

The paper is organized as follows. In Section \ref{Prelims} we define the relevant terms and provide necessary background or references to it. Section \ref{Tools} contains statements and proofs of the four main tools needed to start a compactness theorem. These include an $\epsilon$-regularity theorem, a gap theorem, a convergence theorem, and a removable singularity theorem for harmonic maps. Notice that the removable singularity theorem extends \cite[Theorem 3.6]{Banff2}, which proved the result for \emph{conformal} harmonic maps. In Section \ref{IsoperimetricSection} we prove Theorem \ref{thm:isoperimetric}, an isoperimetric inequality for conformal harmonic maps into compact locally CAT(1) spaces with small area and small image.  In Section \ref{BTC}, we prove Theorem \ref{MAIN} using the tools from Sections \ref{Tools} and \ref{IsoperimetricSection} and following the general outline of \cite{Parker}. 

\section{Preliminaries}\label{Prelims}

Throughout the paper we let $(M, g)$ denote a compact Riemann surface with a smooth metric and let $(X, d)$ denote a compact locally CAT(1) space. We refer the reader to  \cite[Section 2.2]{Banff1} for background on CAT(1) spaces. A metric space $(X,d)$ is said to be \emph{locally} CAT(1) if every point of $X$ has a geodesically convex CAT(1) neighborhood. Note that for a compact locally CAT(1) space, there exists a radius $r(X)>0$ such that for all $P \in X$, $\overline {\B_{r(X)}(P)}$ is a compact CAT(1) space. Let
\[
\tau(X):= \min\{r(X), \pi/4\}
\]and let $\mathrm{inj}(M)$ denote the injectivity radius of $M$.

For $r \in (0, \mathrm{inj}(M))$, $t \in (0, \tau(X))$, we denote geodesic disks and balls in their respective domains as $D_r(x) \subset M$ and $\B_t(P) \subset X$. We also frequently consider geodesic disks with respect to the metric induced by the pullback of the exponential map and use the same notation, $D_r(0) \subset T_xM =\mathbb R^2$.

Following the definition in \cite{korevaar-schoen1}, the Sobolev space $W^{1,2}(M,X)$ is the space of finite energy maps. That is, $u \in W^{1,2}(M,X)$  if its energy density function (as defined in \cite{korevaar-schoen1}) $|\nabla u|^2 \in L^1(M)$. The total energy of the map $u$ is given by
\[
E[u]:=\int_M|\nabla u|^2 d\mu_g
\]and we denote the energy on subsets $\Omega \subset M$ by
\[
E[u,\Omega]:= \int_\Omega |\nabla u|^2 d\mu_g.
\]Given any $h \in W^{1,2}(\Omega,X)$ we define
\[
W^{1,2}_h(\Omega,X):=\{ f \in W^{1,2}(\Omega,X): Tr(f) = Tr(h)\}
\]where $Tr(u) \in L^2(\partial \Omega,X)$ denotes the trace map (see \cite{korevaar-schoen1}).  
\begin{defn}\label{def:min}
A map $u \in W^{1,2}(M,X)$ is \emph{harmonic} if it is locally energy minimizing. In particular, for each $x \in M$ there exist $0<r_x$, $0< \rho< \tau(X)$, and $P \in X$ such that $u(D_{r_x}(x)) \subset \B_\rho(P)$ and $h:=u|_{D_{r_x}(x)}$ has finite energy and minimizes energy among all maps in $W^{1,2}_h(D_{r_x}(x),\overline{ \B_\rho(P)})$. 
\end{defn} The existence and uniqueness of Dirichlet solutions follows from \cite[Lemma B.2]{Banff2} and \cite{serbinowski}. We will need also the regularity of such solutions.
\begin{theorem}[Lemma 1.3, \cite{Banff1}]
Suppose that $u:D_r \to \B_{\tau(X)}(P) \subset X$ is an energy minimizing map. Then $u$ is Lipschitz continuous on $D_{r/2}$ with Lipschitz constant depending only on $E[u, D_r]$ and $g$.
\end{theorem}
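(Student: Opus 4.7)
The plan is to adapt the Korevaar--Schoen interior regularity machinery, developed for NPC (CAT(0)) targets, to the present CAT(1) setting, following the template of Serbinowski's extension. The key enabler is the constraint that the image lies in the small ball $\B_{\tau(X)}(P)$ with $\tau(X) \le \pi/4 < \pi/2$: inside such a ball of a CAT(1) space, geodesics are unique and the squared distance function is uniformly convex along geodesics, with convexity constant depending only on $\tau(X)$. This makes the target behave, up to a controlled curvature correction, like an NPC ball.

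First I would establish a subharmonicity-type estimate for the composed functions $x \mapsto d^2(u(x),Q)$ for arbitrary $Q \in \overline{\B_{\tau(X)}(P)}$. Combining CAT(1) quadrilateral comparison with the energy-minimizing property of $u$ (by testing against competitor maps produced from geodesic homotopies that agree with $u$ on the boundary of a small disk) should yield a distributional inequality of the form $\Delta\, d^2(u(\cdot),Q) \ge c(\tau(X)) |\nabla u|^2 - C(\tau(X))\,d^2(u(\cdot),Q)$. Testing against radial cutoffs gives a Caccioppoli-type inequality and, by a standard iteration on concentric disks, a Morrey decay $E[u,D_\rho(x)] \le C(\rho/r)^\alpha E[u,D_r(x)]$ for some $\alpha \in (0,1]$, which in turn yields a first H\"older estimate on $u$.

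Next I would upgrade this to a pointwise $L^\infty$ bound on the energy density. Using the Korevaar--Schoen directional energy densities, the subharmonicity of $d^2(u(\cdot),u(x_0))$ together with the small-image hypothesis yields a mean value inequality that controls $|\nabla u|^2(x_0)$ by the averaged energy on a disk around $x_0$. Integrating this pointwise bound along rectifiable curves in the domain gives the interior Lipschitz estimate on $D_{r/2}$, with constant depending only on $E[u,D_r]$, $\tau(X)$, and the background metric $g$.

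The main obstacle is the curvature correction term. In the NPC case, $d^2(u,Q)$ is unconditionally subharmonic along $u$ and the Korevaar--Schoen mean value argument runs cleanly; in our CAT(1) case the lower-order term proportional to $d^2(u,Q)$ must be absorbed, which is possible precisely because the radius bound $\tau(X) \le \pi/4$ places us in a regime where the sphere comparison is sufficiently close to the flat one. Careful bookkeeping is required to ensure that no hidden dependence on the modulus of continuity of $u$ contaminates the Lipschitz constant, and in particular to verify that the small-image assumption is used only through the quantity $\tau(X)$, which is intrinsic to $X$.
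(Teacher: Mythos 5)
The paper offers no proof of this statement: it is imported verbatim from \cite{Banff1} (Lemma 1.3 there), whose argument follows the Korevaar--Schoen interior regularity scheme \cite{korevaar-schoen1} as adapted to small balls in CAT(1) spaces by Serbinowski \cite{serbinowski}. Your overall architecture — quadrilateral comparison plus minimality to get weak differential inequalities for composed distance functions, with the radius bound $\tau(X)\le \pi/4$ guaranteeing uniform convexity and an absorbable curvature correction — is therefore the right one in spirit.

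There is, however, a genuine gap at the decisive step, the passage from energy control to a pointwise Lipschitz bound. You assert that the weak subharmonicity of $x\mapsto d^2(u(x),u(x_0))$ ``yields a mean value inequality that controls $|\nabla u|^2(x_0)$.'' It does not: a mean value inequality bounds a \emph{subharmonic function} by its average, and the subharmonic function here is $d^2(u(\cdot),Q)$, not $|\nabla u|^2$. From $\Delta\, d^2(u,Q)\ge c\,|\nabla u|^2 - C\, d^2(u,Q)$ one only extracts, by testing with cutoffs, an integral (Caccioppoli) bound of the form $\int_{D_{\rho/2}}|\nabla u|^2 \le C\rho^{-2}\sup_{D_\rho} d^2(u,Q)+\dots$, never a pointwise bound on the density. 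A mean value argument for $|\nabla u|^2$ would require $|\nabla u|^2$ itself to be almost subharmonic, i.e.\ a Bochner-type inequality; in the metric-target setting that inequality (Proposition 3.2 of the present paper, quoted from \cite{FZ}) is established only for maps already known to be Lipschitz, so invoking it here is circular. The device actually used is the weak subharmonicity of $x\mapsto d(u(x),v(x))$ for \emph{two} energy minimizers $u,v$ into the same convex ball, obtained by comparing each map with the geodesic-midpoint competitor built from the pair. Taking $v(\cdot)=u(\cdot+h)$, which is again a minimizer on the overlap of the shifted disks, the sup--mean-value inequality for the genuinely subharmonic function $d(u(\cdot),u(\cdot+h))$, combined with the Korevaar--Schoen difference-quotient characterization of energy, $\|d(u(\cdot),u(\cdot+h))\|_{L^2(D_{3r/4})}\le C|h|\,E[u,D_r]^{1/2}$, yields $\sup_{D_{r/2}} d(u(x),u(x+h))\le C|h|$ directly; this bypasses pointwise control of the energy density entirely and is the step your sketch is missing. (Your H\"older step is also lighter than it looks: Morrey decay of the energy does not follow from cutoff testing alone but needs the order-function monotonicity or comparison with Dirichlet replacements; still, that is a matter of detail, whereas the Lipschitz upgrade as written would fail.)
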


Let $|u_*(Z)|^2$ denote the directional energy density function for $Z \in \Gamma( TM)$, where $\Gamma(TM)$ is the space of Lipschitz vector fields on $M$ (see \cite[Section 1.8]{korevaar-schoen1}). For any finite energy map $u:(M,g) \to (X,d)$, let 
\[
\pi:\Gamma(TM) \times \Gamma(TM) \to L^1(M)
\]where 
\[
\pi(Z,W):= \frac 14 \left|u_*(Z+W)\right|^2 - \frac 14\left|u_*(Z-W)\right|^2.
\]By \cite[Lemma 3.5]{Banff1}, $\pi$ is a continuous, symmetric, bilinear, non-negative tensorial operator.

Let 
\begin{equation*}
	\Phi_u = \pi \left( {\partial_x},  {\partial_x}  \right)   -  \pi\left({\partial_y},{\partial_y}  \right) - 2 \mathbf{\textit{i}}\pi \left({\partial_x} , {\partial_y}\right)
\end{equation*}
denote the \emph{Hopf function} for $u$. As in the smooth setting, when $u$ is harmonic, $\Phi_u$ is holomorphic (see \cite[Lemma 3.7]{Banff2}).

\section{Analogues of Classical Results} \label{Tools}
In the smooth setting, the compactness follows from four properties of harmonic maps (see \cite[Proposition 1.1]{Parker}). We state an analogous proposition for harmonic maps into compact locally CAT(1) spaces. Note that the uniform convergence statement is not as strong as Parker's; we can get only $C^0$ uniform convergence. Nevertheless, we are still able to prove Theorem \ref{MAIN}. 

\begin{prop}\label{BTtools}
There exist positive constants $C', \epsilon'>0$ depending only on $(M, g)$ and $(X,d)$ such that the following hold:
\begin{enumerate}
\item (Sup Estimate) Let $u:D_r\to X$ be a harmonic map with $E\left[ u,D_r \right]< \epsilon'$ and $0<r<\epsilon'$. Then 
\[
\max_{0 \le \sigma \le r} \sigma^2 \sup_{D_{r - \sigma}} |\nabla u|^2 \le C'. 
\]In particular for all $x \in D_{3r/4}$, 
\[
 |\nabla u|^2(x) \leq \frac {C'}{r^2}.
\]	
\item (Energy Gap) If $(M,g)=(\mathbb S^2, g_0)$, where $g_0$ is the standard metric on the sphere, and $E\left[ u,\mathbb S^2 \right] < \epsilon'$, then $u$ is a constant map.  
\item (Uniform Convergence) Let $u_k:D_r \to X$ be a sequence of harmonic maps with $E\left[ u_k,D_r \right]<\epsilon'$. Then a subsequence $u_k$ convergence in $C^0$ uniformly to a harmonic map $u$ on $D_{r/2}$.
\item (Removable Singularity) Let $u:D_r \backslash \{0\} \to X$ be a finite energy harmonic map. Then $u$ extends to a locally Lipschitz harmonic map $u:D_r \to X$.

\end{enumerate}
\end{prop}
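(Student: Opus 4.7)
For parts (1) and (3), I would build everything from one pointwise $\epsilon$-regularity bound $|\nabla u|^2(x_0) \leq C r^{-2} E[u, D_r(x_0)]$ for harmonic maps with small energy and image in a CAT(1) ball. This pointwise estimate rests on the subharmonicity of $|\nabla u|^2$ in the small-ball CAT(1) regime (a consequence of the squared-distance convexity recorded in \cite{Banff1}) combined with the planar mean-value inequality. The scale-invariant Sup Estimate (1) then follows from the classical Schoen blow-up: were $\sigma^2 \sup_{D_{r-\sigma}} |\nabla u|^2$ unbounded, a near-maximizing pair $(\sigma_0, x_0)$ rescaled by $|\nabla u|(x_0)$ would produce a harmonic map with unit gradient at the origin, bounded gradient on a large disk, and arbitrarily small energy, contradicting the pointwise bound. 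The Uniform Convergence (3) is then immediate: Arzel\`a-Ascoli extracts a $C^0$-convergent subsequence from the uniform Lipschitz bound on $D_{r/2}$ provided by (1), and the limit is harmonic by energy lower semicontinuity together with stability of Dirichlet solutions under $C^0$ convergence, as developed in \cite{Banff2}.

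For the Energy Gap (2), $\Phi_u$ is a holomorphic quadratic differential on $(\mathbb{S}^2, g_0)$ by \cite[Lemma 3.7]{Banff2}, so $\Phi_u \equiv 0$ since $\mathbb{S}^2$ admits no nonzero such differentials; consequently $u$ is weakly conformal. For $\epsilon'$ sufficiently small, part (1) together with the area-equals-$E/2$ identity from conformality confines the image to a single geodesic ball $\B_\rho(P)$ with $\rho < \tau(X)$. In such a ball, $d^2(\cdot, Q)$ is strongly convex for every $Q$, so $d^2(u(\cdot), Q)$ is subharmonic on $\mathbb{S}^2$ (post-composition of a CAT(1)-harmonic map with a convex function) and hence constant on the closed surface. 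Taking $Q = u(x_0)$ for any $x_0 \in \mathbb{S}^2$ forces this constant to be zero, so $u \equiv Q$.

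The main obstacle is part (4), which I would split into continuous extension followed by a Lipschitz harmonic upgrade. For the extension, finite energy gives $\int_0^r \rho^{-1} \int_{C_\rho} |u_*(\partial_\theta)|^2\, d\theta\, d\rho < \infty$, so there exists a sequence $\rho_n \downarrow 0$ along which $\int_{C_{\rho_n}} |u_*(\partial_\theta)|^2\, d\theta \to 0$; Cauchy-Schwarz then bounds the length (hence diameter) of $u(C_{\rho_n})$, forcing this curve to shrink to a single point $P$. Applying part (1) on thin annuli $D_{2\rho_n}\setminus D_{\rho_n/2}$ (which carry small energy) propagates the conclusion across $D_{\rho_n}$ and yields a continuous extension $u(0) := P$.

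For the upgrade, let $v$ be the Dirichlet solution on $D_{r/2}$ with trace $Tr(u|_{\partial D_{r/2}})$; existence comes from \cite[Lemma B.2]{Banff2} and Lipschitz regularity from \cite[Lemma 1.3]{Banff1}. After possibly shrinking $r$, both $u$ and $v$ map into a common geodesic ball of radius less than $\tau(X)$. With a logarithmic cut-off $\eta_n$ satisfying $\eta_n \equiv 0$ on $D_{\rho_n}$, $\eta_n \equiv 1$ outside $D_{\sqrt{\rho_n}}$, and $\int|\nabla \eta_n|^2 \to 0$ (exploiting that a point has vanishing $2$-capacity), the geodesic interpolation $w_n(x) = \gamma_x(\eta_n(x))$ from $v(x)$ to $u(x)$ yields a $W^{1,2}$ competitor whose energy, by the CAT(1) quadrilateral inequality of \cite{Banff1}, satisfies $E[w_n, D_{r/2}] = E[u, D_{r/2}] + o(1)$. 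Minimality of $v$ against $w_n$ gives $E[v, D_{r/2}] \leq E[u, D_{r/2}]$, and the symmetric interpolation (using local minimality of $u$ on the annulus $D_{r/2}\setminus\overline{D_{\rho_n}}$) yields the reverse inequality; uniqueness of the Dirichlet problem then forces $u = v$ on $D_{r/2}$. The technical heart of this step is the energy estimate for the geodesic interpolation in a non-smooth target, which depends entirely on the squared-distance convexity of the CAT(1) structure.
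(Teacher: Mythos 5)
Your treatment of parts (1) and (3) matches the paper's: a Choi--Schoen blow-up built on an almost-subharmonicity estimate for $|\nabla u|^2$ plus Morrey's mean value inequality, followed by equicontinuity and the stability of Dirichlet solutions from \cite{Banff2}. One correction there: the weak inequality $-\int \nabla|\nabla u|^2\cdot\nabla\eta \geq -C\int \eta|\nabla u|^2(1+|\nabla u|^2)$ is \emph{not} a consequence of the convexity of $d^2(\cdot,Q)$; it is a genuinely harder local Bochner-type formula due to Freidin--Zhang \cite{FZ}, which the paper patches together over a cover of small balls. The convexity of squared distance only gives subharmonicity of $d^2(u(\cdot),Q)$, which is not enough to run the blow-up.

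In part (2) there is a genuine gap at the step ``part (1) together with area $=E/2$ confines the image to a single geodesic ball.'' Small energy/area does not bound the diameter of the image: the sup estimate yields $|\nabla u|^2\leq C'/r^2$ with $C'$ a \emph{fixed} constant, so the resulting Lipschitz bound on $u$ is of order one and the image could a priori be a long thin tube of small area. What rules this out is a density lower bound: the paper invokes the monotonicity formula of \cite[Theorem 3.4]{Banff2}, which forces $E[u,\mathbb S^2]\geq C\tau(X)^2$ whenever $\mathrm{diam}(u(\mathbb S^2))>\tau(X)$. Your reduction to the conformal case via $\Phi_u\equiv 0$ on $\mathbb S^2$ is fine (and is implicitly how the paper's gap theorem, stated for conformal maps, applies here), but you must cite monotonicity, not the sup estimate, to localize the image.

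Part (4) is where your route diverges most from the paper, and it is also where the essential difficulty lies. The paper does \emph{not} argue directly: it builds a double cover $W$ of the punctured disk, takes a square root of the lifted Hopf function, and forms a conformal suspension $\tilde u=(\underline u,v)$ with $\Phi_{\tilde u}\equiv 0$, so that the removable singularity theorem for \emph{conformal} harmonic maps \cite[Theorem 3.6]{Banff2} (whose proof rests on the monotonicity formula) applies. Your continuity step does not close as written: finite energy gives circles $C_{\rho_n}$ with $\mathrm{length}[u(C_{\rho_n})]\to 0$, and the sup estimate on dyadic annuli gives oscillation $O(\sqrt{E_j})$ on the $j$-th annulus, but $\sum_j E_j<\infty$ does not imply $\sum_j\sqrt{E_j}<\infty$, so the oscillations need not telescope to a limit at the origin. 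In the smooth setting this is precisely why Sacks--Uhlenbeck prove a power decay $E[u,D_\rho]\leq C\rho^\alpha$ using the Euler--Lagrange equation; lacking a PDE, the paper routes through conformality and monotonicity instead. Your Lipschitz-harmonic upgrade via the zero $2$-capacity of a point and geodesic interpolation is sound in spirit for the inequality $E[v]\leq E[u]$, but the ``symmetric interpolation'' for the reverse inequality uses minimality of $u$ on the large annulus $D_{r/2}\setminus\overline{D_{\rho_n}}$, which does not follow from local minimality (harmonic maps here are only known to minimize on small disks with image in a CAT(1) ball). A cleaner finish, once continuity is known, is to show $d^2(u,v)$ is weakly subharmonic and bounded on the punctured disk, extend it subharmonically across the puncture, and apply the maximum principle to conclude $u=v$.
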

 The entirety of this section is devoted to proving each of these results. The results are listed in the order in which they are proven and each subsection contains the proof of a single item.

In the smooth setting, the proofs of these results rely on the Euler-Lagrange equation of the (perturbed) energy functional. Lacking such an equation, we instead exploit weak differential inequalities which follow from the locally minimizing property of harmonic maps coupled with the local convexity of the target space.

\subsection{Sup Estimate}

Following the now classical methods of \cite{Choi-Schoen}, we use a monotonicity formula and scale invariance to prove pointwise gradient bound for harmonic maps with small energy. 

\begin{prop}\label{GradientProp}
Suppose $u : D_r \to X$, $r \leq 1$, is a finite energy harmonic map. There exists an $\epsilon_0 >0$, depending only on the metric $g$, such that if $E\left[ u , D_r \right] < \epsilon_0$ and $r < \epsilon_0$, then 
\begin{equation}
	\max_{0 \le \sigma \le r} \sigma^2 \sup_{D_{r - \sigma}} |\nabla u|^2 \le C_0^2 ,
\end{equation} where $C_0$ depends only on the metric $g$. 
\end{prop}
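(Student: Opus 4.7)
The plan is a Choi--Schoen style point-picking and blow-up argument. Define $F:[0,r]\to[0,\infty)$ by $F(\sigma):=\sigma^2\sup_{D_{r-\sigma}}|\nabla u|^2$. The Lipschitz regularity recalled above makes $|\nabla u|^2$ bounded on compact sub-disks, so $F$ is continuous with $F(0)=F(r)=0$, and hence attains its maximum at some $\sigma_0\in(0,r)$ (unless $\max F=0$, in which case the claim is trivial). Choose $x_0\in\overline{D_{r-\sigma_0}}$ realizing $|\nabla u|^2(x_0)=\sup_{D_{r-\sigma_0}}|\nabla u|^2=:e_0$. Since $D_{\sigma_0/2}(x_0)\subset D_{r-\sigma_0/2}$, the maximality of $F(\sigma_0)$ forces $|\nabla u|^2 \le F(\sigma_0/2)/(\sigma_0/2)^2 \le 4 e_0$ on $D_{\sigma_0/2}(x_0)$.

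Now assume for contradiction that $F(\sigma_0)=\sigma_0^2 e_0 > C_0^2$ for a constant $C_0\ge 2$ to be fixed. Introduce the rescaled map $v(y):=u(x_0+y/\sqrt{e_0})$ on the Euclidean disk $D_R(0)$ with $R=\sigma_0\sqrt{e_0}/2\ge C_0/2\ge 1$. A direct change of variables shows that $v$ is harmonic for the pulled-back metric (which, at the small scale $1/\sqrt{e_0}$ guaranteed by $r<\epsilon_0$, is a smooth perturbation of the Euclidean metric and is absorbed into constants depending only on $g$), and satisfies
\[
|\nabla v|^2(0)=1,\qquad |\nabla v|^2 \le 4, \qquad E[v,D_R(0)] = E[u,D_{\sigma_0/2}(x_0)]\le E[u,D_r]<\epsilon_0.
\]
Moreover, the image $v(D_R(0))=u(D_{\sigma_0/2}(x_0))$ sits in a single small ball of radius $<\tau(X)$ once $\epsilon_0$ is chosen small enough (by Lipschitz regularity), so we remain squarely in the CAT(1) convexity regime.

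The required analytic input is a weak Bochner-type inequality for harmonic maps into CAT(1) targets of the form $\Delta |\nabla v|^2 \ge -C_1 |\nabla v|^4 - C_2 |\nabla v|^2$ distributionally, with constants depending only on $g$ and the curvature bound; this is the CAT(1) counterpart of the Korevaar--Schoen subharmonicity of the energy density in the NPC case, and should follow from the local convexity of the squared distance on balls of radius $<\tau(X)$ as exploited in \cite{Banff1,Banff2}. Combined with $|\nabla v|^2\le 4$, it reduces to the linear differential inequality $\Delta |\nabla v|^2 \ge -A\,|\nabla v|^2$ on $D_R(0)$ for a universal $A$, to which the classical Moser mean value inequality for nonnegative weak subsolutions applies:
\[
1 = |\nabla v|^2(0) \le \sup_{D_{1/2}(0)} |\nabla v|^2 \le C \int_{D_1(0)}|\nabla v|^2 \le C\,E[v,D_R(0)] < C\,\epsilon_0.
\]
Choosing $\epsilon_0<1/C$ produces the desired contradiction, yielding $\max F\le C_0^2$ with $C_0$ depending only on $g$. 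The main obstacle is establishing the weak Bochner inequality in the genuinely singular setting: the smooth proof via the second variation of energy is unavailable, and one must instead leverage the CAT(1) convexity together with the local energy-minimizing property of $u$ to produce a distributional subharmonicity of the energy density modulo controlled quadratic lower-order terms. Once this input is in place, the point-picking and Moser mean value steps go through essentially verbatim from the smooth theory.
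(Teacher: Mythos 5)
Your proposal is correct and follows essentially the same Choi--Schoen point-picking, rescaling, and Moser/Morrey mean-value argument as the paper. The weak Bochner-type inequality you flag as the main missing input is exactly what the paper supplies: it quotes the local Bochner inequality of Freidin--Zhang \cite{FZ} and globalizes it over the disk with a partition of unity (Proposition \ref{SubharmProp}), after which the blow-up and contradiction proceed just as you describe.
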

Before proceeding with the proof, we point out an important subharmonicity estimate that we will need. The result follows from a local Bochner type inequality (see \cite{FZ}).

\begin{prop}\label{SubharmProp}
	Let $u : (D_{2r},g) \to X$ be a harmonic map with finite energy and let $g$ be a metric with bounded curvature. Then for all $\eta \in C_0^\infty(D_{r})$,  
	\begin{equation}
		-\int_{D_r} \nabla |\nabla u|^2 \cdot \nabla \eta \ge -C' \int_{D_r} \eta |\nabla u|^2 \left( 1+|\nabla u|^2\right)
	\end{equation}where $C'>0$ depends only on the curvature of the domain.
\end{prop}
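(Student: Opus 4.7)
The plan is to deduce the weak subharmonicity from the local Bochner-type inequality of Freidin--Zhang \cite{FZ}, then convert it into the stated distributional form via integration by parts.

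In the smooth case, Bochner's formula for a harmonic map $u:(M,g)\to(N,h)$ reads
\[
\tfrac{1}{2}\Delta|\nabla u|^2 \;=\; |\nabla du|^2 + \operatorname{Ric}^g(du,du) - \langle R^N(du\wedge du), du\wedge du\rangle.
\]
Discarding the non-negative Hessian term and invoking the hypotheses $\sec_N \le 1$ (coming from CAT$(1)$) together with the bounded curvature of $(M,g)$ yields the pointwise inequality $\Delta|\nabla u|^2 \ge -C\,|\nabla u|^2(1+|\nabla u|^2)$. In the present setting $|\nabla u|^2$ is only $L^1$, so the formula above is not available pointwise; however, the corresponding distributional version is precisely the content of \cite{FZ}. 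Their proof constructs one-parameter families of comparison maps via geodesic homotopies in the CAT$(1)$ target and exploits energy minimality in place of the Euler--Lagrange equation, producing the statement that for every non-negative $\eta\in C_0^\infty(D_r)$,
\[
\int_{D_r}\Delta|\nabla u|^2\,\eta \;\ge\; -C'\int_{D_r}\eta\,|\nabla u|^2(1+|\nabla u|^2),
\]
with $C'$ depending only on the curvature of $g$.

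Given this, the proof reduces to two routine steps. First, shrink $r$ if necessary so that $u(D_r)$ is contained in a single convex CAT$(1)$ ball in $X$; this is possible because $u$ is continuous (by the regularity lemma recalled above) and $X$ is compact locally CAT$(1)$. Second, integrate by parts: for any $\eta\in C_0^\infty(D_r)$,
\[
-\int_{D_r}\nabla|\nabla u|^2\cdot\nabla\eta \;=\; \int_{D_r}\Delta|\nabla u|^2\,\eta \;\ge\; -C'\int_{D_r}\eta\,|\nabla u|^2(1+|\nabla u|^2),
\]
which is the asserted inequality (understood, as is standard, for non-negative test functions).

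The main obstacle, already resolved in \cite{FZ}, is the derivation of the Bochner inequality in the singular setting, where one must reproduce the effect of the target curvature term $-\langle R^N(du\wedge du),du\wedge du\rangle$ using only distance comparisons afforded by the CAT$(1)$ condition. For the present proposition, the remaining work is simply to verify the hypotheses of \cite{FZ} in our setup: Lipschitz continuity of $u$ is provided by the regularity statement above, the image lies in a single CAT$(1)$ ball after shrinking $r$, and bounded curvature of $g$ is built into the statement.
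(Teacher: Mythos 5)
There is a genuine gap, and it sits exactly where the paper's proof does its real work. Your reduction ``shrink $r$ if necessary so that $u(D_r)$ is contained in a single convex CAT(1) ball'' is not a valid reduction: the proposition asserts the inequality for the \emph{given} disk $D_r$ and for \emph{all} $\eta \in C_0^\infty(D_r)$, and the only hypothesis on $u$ is finite energy, so $u(D_r)$ may well have diameter larger than $\tau(X)$. Proving the statement only after shrinking $r$ yields a strictly weaker, purely local assertion, and in the paper's application (Proposition \ref{GradientProp}, where the rescaled map $\tilde u$ on $D_1$ has $\sup|\nabla\tilde u|^2\le 8$ but image of diameter possibly exceeding $\tau(X)\le\pi/4$) that local version does not suffice. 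The ``integration by parts'' step you describe is vacuous, since the \cite{FZ} inequality is already stated in the weak form; so your proposal contains no mechanism for passing from the local inequality to the global one.

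The missing idea is the localization-and-patching argument: cover $\overline{D_r}$ by finitely many disks $D_{s_i}(x_i)$ with $u(D_{s_i}(x_i))\subset \B_{\tau(X)}(u(x_i))$ (possible by continuity of $u$ and compactness of $\overline{D_r}$), apply the \cite{FZ} inequality on each such disk, and then assemble these using a smooth partition of unity $\{\phi_i\}$ subordinate to the cover. Writing $\eta=\sum_i \eta\phi_i$ and using that each $\eta\phi_i$ is an admissible test function on $D_{s}(x_i)$, one sums the local inequalities to obtain
\[
-\int_{D_r}\nabla|\nabla u|^2\cdot\nabla\eta
=-\sum_i\int \nabla|\nabla u|^2\cdot\nabla(\eta\phi_i)
\ge -C'\sum_i\int \eta\phi_i\,|\nabla u|^2(1+|\nabla u|^2)
=-C'\int_{D_r}\eta\,|\nabla u|^2(1+|\nabla u|^2),
\]
using $\sum_i\phi_i\equiv 1$. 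This patching step is the entire content of the paper's proof and must be supplied; otherwise your argument establishes only the local statement already contained in \cite{FZ}.
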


\begin{proof}
For each $x \in \overline {D_r}$, let $s_x:= \sup\{s>0: u(D_s(x)) \subset \B_{\tau(X)}(u(x))\}$. For the open cover $\{D_{s_x}(x)\}_{x \in \overline{D_r}}$, consider a finite subcover $\left\{D_{s_i}(x_i)\right\}_{1\le i \le m}$ and denote $s:= \min_i\{s_{i}\}$. By \cite {FZ} there exists $C'>0$ depending on the curvature of $M$ such that for each $x_i$, 
\begin{equation}
-\int_{D_s(x_i)} \nabla  |\nabla u|^2 \cdot \nabla \eta \ge -C' \int_{D_s(x_i)} \eta |\nabla u|^2 \left( 1+|\nabla u|^2\right).
\end{equation}
Now let $\{\phi_i\}$ be a smooth partition of unity subordinate to the covering. Then $\phi_i \in C^\infty_0 (D_s(x_i))$ for each $i$. Moreover, $\sum_i \phi_i \equiv 1$, $\sum_i \nabla \phi_i \equiv 0$. Therefore for any test function $\eta \in C^1_0 (D_r)$, 
\begin{eqnarray*}
-\int_{D_r} \nabla |\nabla u|^2  \cdot \nabla \eta &=&- \int_{D_r} \nabla |\nabla u|^2  \cdot \nabla \left(\eta \left(\sum_i \phi _i\right)\right) \\ &=& -\sum_i \int_{D_s(x_i)\cap D_r} \nabla |\nabla u|^2 \cdot \nabla (\eta \phi_i ) \\ &\ge& -C' \sum_i  \int_{D_s(x_i)\cap D_r} \eta \phi_i |\nabla u|^2 \left( 1+|\nabla u|^2\right) \\
&=& -C' \int_{D_r} \; \eta|\nabla u|^2 \left( 1+|\nabla u|^2\right).
\end{eqnarray*}
	
\end{proof}

\begin{proof}[Proof of Proposition \ref{GradientProp}]
Choose $\sigma_0 \in (0 , r] $ and $x_0 \in \overline D_{r - \sigma_0}$ so that
\begin{equation*}
	 \sigma_0^2  \sup_{D_{r - \sigma_0}} |\nabla u|^2 = \max_{\sigma \in (0,r]} \sigma^2 \sup_{D_{r - \sigma}}  |\nabla u|^2,
\end{equation*} 
and
\begin{equation*}
	 |\nabla u|^2 (x_0) \geq \frac 12 \sup_{D_{r - \sigma_0}}  |\nabla u|^2.
\end{equation*}
We deduce that
\begin{equation*}
	\sup_{D_{\frac{\sigma_0}{2}}(x_0)} |\nabla u|^2 \le 8 |\nabla u|^2 (x_0).
\end{equation*}
Notice that if $\sigma_0^2 |\nabla u|^2 (x_0) \le 4$ then the desired result holds. So suppose instead that $ |\nabla u|^2 (x_0)  \ge 4 \sigma_0^{-2}$. Let $\tilde{u}: D_1 \to X $ be given by
\begin{equation*}
	\tilde{u}(x) = u \left( x_0 +|\nabla u|^{-1} (x_0)  x  \right)
\end{equation*} 
Then 
\begin{equation*}
	\sup_{D_1} \left|   \nabla \tilde{u} \right|^2 \le 8     \quad and \quad  \left|   \nabla \tilde{u} \right|^2(0) = 1.
\end{equation*}

By Lemma \ref{SubharmProp}, for all $\eta \in C_0^\infty(D_1)$, 
\begin{equation*}
	-\int_{D_1} \nabla \left|   \nabla \tilde{u}  \right|^2 \cdot \nabla \eta \ge -C'\int_{D_1}\eta |\nabla \tilde u|^2 \left( 1+|\nabla \tilde u|^2\right) \geq -9C' \int_{D_1}\eta \left|\nabla \tilde u\right|^2.
\end{equation*} Finally, Morrey's mean value inequality and the scale invariance of the energy implies that, for $c$ depending only on the domain metric $g$, 
\begin{equation*}
1 =	\left|   \nabla \tilde{u}  \right|^2 (0) \le   c  \int_{D_1}	\left| \nabla  \tilde{u}  \right|^2 \leq c \epsilon. 
\end{equation*}For $\epsilon$ sufficiently small, we get a contradiction. 
\end{proof}

\subsection{Energy Gap}

\begin{prop}[Energy Gap]\label{GapThm}There exists $\epsilon_{\mathrm{gap}}>0$ depending only on $g_0, (X,d)$ (where $g_0$ is the standard metric on $\mathbb S^2$) such that the following holds:

Let $u:\mathbb S^2 \to X$ be a conformal, harmonic map such that $E\left[ u,\mathbb S^2 \right] < \epsilon_{\mathrm{gap}}$. Then $u$ is a constant map.  
\end{prop}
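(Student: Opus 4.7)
My plan is to leverage the scale invariance of the problem to reduce the question to a pointwise decay statement for the energy density, and then apply the sup estimate of Proposition~\ref{GradientProp}. First, I would compose $u$ with an inverse stereographic projection $\sigma^{-1}:\mathbb C\to\mathbb S^2\setminus\{N\}$, where $N$ denotes the north pole, to obtain $v:=u\circ\sigma^{-1}:(\mathbb C,g_{\mathrm{flat}})\to X$. Since the Korevaar--Schoen energy in two dimensions is conformally invariant, $E[v,\mathbb C]=E[u,\mathbb S^2]<\epsilon_{\mathrm{gap}}$, and any $W^{1,2}$ competitor $\tilde v$ with prescribed trace on a relatively compact subdomain of $\mathbb C$ pulls back through $\sigma$ to a competitor for $u$ with matching trace and matching energy. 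Consequently, $v$ inherits the local energy minimizing property from $u$ and is therefore harmonic.

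Next, for any $z_0\in\mathbb C$ and any $R>0$, I would consider the rescaled, translated map $v_{R,z_0}(w):=v(z_0+Rw)$. Translations and dilations of the flat plane preserve both energy comparisons and traces, so $v_{R,z_0}$ is harmonic on $\mathbb C$, and a change of variables gives $E[v_{R,z_0},D_r]=E[v,D_{Rr}(z_0)]\le E[v,\mathbb C]<\epsilon_{\mathrm{gap}}$ for every $r,R>0$. Choosing $\epsilon_{\mathrm{gap}}\le\epsilon_0$ and applying Proposition~\ref{GradientProp} on the disk $D_{\epsilon_0/2}$ (taking the choice $\sigma=\epsilon_0/4$ in the max formula) yields
\[
|\nabla v_{R,z_0}|^2(0)\le \frac{16\,C_0^2}{\epsilon_0^2}.
\]
The scaling identity $|\nabla v_{R,z_0}|^2(0)=R^2|\nabla v|^2(z_0)$ for Korevaar--Schoen directional energies then gives $|\nabla v|^2(z_0)\le 16C_0^2/(\epsilon_0^2 R^2)$, and letting $R\to\infty$ forces $|\nabla v|^2(z_0)=0$. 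Since $z_0$ was arbitrary, $v$ is constant on $\mathbb C$, so $u$ is constant on $\mathbb S^2\setminus\{N\}$. Finally, Proposition~\ref{GradientProp} applied on a small geodesic disk around $N$ provides a Lipschitz bound for $u$ there, and by continuity $u$ is constant on all of $\mathbb S^2$.

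The step I expect to be the main obstacle is the first one: carefully verifying that local energy minimality for a CAT(1) target is preserved under the conformal change of domain metric induced by $\sigma$. In principle this reduces to the fact that in dimension two the Korevaar--Schoen energy density transforms by a conformal factor that exactly cancels the volume form, so that competitors with prescribed trace correspond bijectively between the two gauges; nevertheless, care is required to localize the comparison away from the singular point $N$ of the stereographic chart and to check that the subsequent rescaling $w\mapsto v(z_0+Rw)$ legitimately produces a harmonic map at every scale $R$ without losing the small-energy hypothesis required by the sup estimate.
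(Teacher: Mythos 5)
Your proof is correct, but it takes a genuinely different route from the paper's. The paper argues by a dichotomy on the size of the image: either $u(\mathbb S^2)$ lies in a single convexity ball $\B_{\tau(X)}(P)$, in which case the weak subharmonicity $\Delta d^2(u(\cdot),P)\ge \frac 12 |\nabla u|^2\ge 0$ on the \emph{closed} surface $\mathbb S^2$ forces $d^2(u(\cdot),P)$, and hence $u$, to be constant; or else $\mathrm{diam}(u(\mathbb S^2))>\tau(X)$, and the monotonicity formula for conformal harmonic maps yields $E[u,\mathbb S^2]\ge C\tau(X)^2$, which is excluded by choosing $\epsilon_{\mathrm{gap}}<C\tau(X)^2$. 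Your argument is instead the ``Liouville via scaling'' route: stereographic projection and conformal invariance of the two-dimensional Korevaar--Schoen energy produce an entire harmonic map $v$ on $\mathbb C$ whose energy is globally below $\epsilon_0$, and since the small-energy hypothesis of Proposition \ref{GradientProp} is scale invariant, the sup estimate applies at every scale $R$ and kills the density as $R\to\infty$. What your route buys: it never uses conformality of $u$ (so it proves the stronger statement recorded in Proposition \ref{BTtools}(2), whereas the paper's large-image case genuinely needs conformality for the monotonicity formula), and $\epsilon_{\mathrm{gap}}$ can be taken to be $\epsilon_0$, depending only on the sup estimate. What the paper's route buys: it stays intrinsic to $\mathbb S^2$ and gives an explicit gap in terms of the convexity radius $\tau(X)$ of the target. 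Your flagged obstacle is real but benign: harmonicity in the sense of Definition \ref{def:min} passes to $v$ and to each $v_{R,z_0}$ because the defining neighborhoods, traces, and competitor energies all transform compatibly under conformal changes of the two-dimensional domain; the only other point to state carefully is that the scaling identity $|\nabla v_{R,z_0}|^2(0)=R^2|\nabla v|^2(z_0)$ holds for a.e.\ $z_0$, which suffices to conclude $|\nabla v|^2=0$ a.e.\ and hence that $v$ is constant.
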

\begin{proof}
Suppose first that $u(\mathbb S^2) \subset \mathcal B_{\tau(X)} (P)$ for some $P \in X$. Then, by \cite[Lemma 4.3]{Banff1}, $\Delta d^2(u(x),P) \geq \frac 12 |\nabla u|^2 \geq 0$ holds weakly on all of $\mathbb S^2$. It follows that $d^2(u(x),P) \equiv 0$, i.e. $u$ is constant. 

Now suppose that $u(\mathbb S^2)$ is not contained in $\mathcal B_{\tau(X)}(P)$ for all $P \in X$. Then, $\mathrm{diam}(u(\mathbb S^2)) > \tau(X)$. By the Monotonicity Formula of \cite[Theorem 3.4]{Banff2}, there exists $C>0$, independent of $u$ and $p \in \mathbb S^2$ such that 
\[
E\left[ u,\mathbb S^2 \right] \geq E\left[ u, u^{-1}(\mathcal B_{\tau(X)}(u(p))) \right] \geq C\tau(X)^2.
\] 

Thus, choosing $\epsilon_{\mathrm{gap}} <C \tau(X)^2$ implies the result.
\end{proof}

\subsection{Uniform Convergence}  
\begin{prop}There exists $\epsilon_2>0$, depending only on $g, (X,d)$ such that the following holds:

Let $u_k:D_r \to X$ be a sequence of harmonic maps with $E\left[ u_k,D_r \right]<\epsilon_2$. Then a subsequence $u_k$ converges in $C^0$ uniformly to a harmonic map $u$ on $D_{r/2}$.
\end{prop}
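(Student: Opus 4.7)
The plan is to combine the Sup Estimate with an Arzel\`a--Ascoli argument and a standard Korevaar--Schoen style energy comparison based on CAT(1) geodesic interpolation.

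First, I take $\epsilon_2 \le \epsilon'$. Applying the Sup Estimate of item (1) of Proposition \ref{BTtools} on sufficiently small subdisks exhausting $\overline{D_{r/2}}$ yields a uniform pointwise bound on $|\nabla u_k|$. Hence $\{u_k\}$ is an equi-Lipschitz family of maps into the compact target $X$, and the Arzel\`a--Ascoli theorem produces a subsequence (not relabeled) converging in $C^0$ uniformly on $\overline{D_{r/2}}$ to a Lipschitz map $u:D_{r/2}\to X$.

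It remains to show that $u$ is harmonic in the sense of Definition \ref{def:min}. Fix $x_0\in D_{r/2}$ and choose $s,\rho>0$ small enough that $u(\overline{D_s(x_0)})\subset\mathcal B_\rho(u(x_0))$ with $\rho<\tau(X)$; by uniform convergence, for $k$ large, $u_k(\overline{D_s(x_0)})\subset\mathcal B_{\rho'}(u(x_0))$ for some $\rho'\in(\rho,\tau(X))$, so each $u_k|_{D_s(x_0)}$ is energy minimizing against competitors valued in $\overline{\mathcal B_{\tau(X)}(u(x_0))}$. Given a Lipschitz competitor $v\in W^{1,2}_u(D_s(x_0), \overline{\mathcal B_\rho(u(x_0))})$ and a small $\delta>0$, pick a Lipschitz cutoff $\eta$ with $\eta\equiv 1$ on $D_{s-2\delta}(x_0)$, $\eta\equiv 0$ on $D_s(x_0)\setminus D_{s-\delta}(x_0)$, and $|\nabla\eta|\le 2/\delta$, and define
\[
v_k^\delta(x) := \gamma_{u_k(x),\,v(x)}(\eta(x)),
\]
where $\gamma_{p,q}(t)$ is the unique CAT(1) geodesic from $p$ to $q$. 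Then $v_k^\delta = v$ on $D_{s-2\delta}(x_0)$ and $v_k^\delta = u_k$ near $\partial D_s(x_0)$, so $\mathrm{Tr}(v_k^\delta)=\mathrm{Tr}(u_k)$, making $v_k^\delta$ an admissible competitor for $u_k$.

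The standard directional-energy bound for a geodesic homotopy in a CAT(1) ball (as in the Korevaar--Schoen/Serbinowski framework, and also used in \cite{Banff1,Banff2}) gives
\[
E[v_k^\delta, D_s] \le E[v, D_{s-2\delta}] + C\bigl(E[v, A_\delta] + E[u_k, A_\delta]\bigr) + \frac{C}{\delta^2}\int_{A_\delta} d^2(u_k, v),
\]
where $A_\delta := D_s\setminus D_{s-2\delta}$. The uniform Lipschitz bound on the $u_k$'s forces $E[u_k, A_\delta]=O(\delta)$, absolute continuity of the integral gives $E[v,A_\delta]\to 0$ as $\delta\to 0$, and the Lipschitz regularity of both $u$ and $v$ together with $\mathrm{Tr}(v)=\mathrm{Tr}(u)$ yields $d(u,v)=O(\delta)$ on $A_\delta$, so $\tfrac{1}{\delta^2}\int_{A_\delta} d^2(u,v)=O(\delta)$; by uniform convergence of $u_k\to u$, for fixed $\delta$ we also have $\limsup_k \tfrac{1}{\delta^2}\int_{A_\delta} d^2(u_k, v) = \tfrac{1}{\delta^2}\int_{A_\delta} d^2(u, v) = O(\delta)$. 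Invoking the minimization inequality $E[u_k, D_s]\le E[v_k^\delta, D_s]$ and the Korevaar--Schoen lower semicontinuity of energy under uniform (hence $L^2$) convergence of equibounded-energy maps,
\[
E[u, D_s] \le \liminf_k E[u_k, D_s] \le \limsup_k E[v_k^\delta, D_s] \le E[v, D_s] + o_\delta(1),
\]
and letting $\delta\to 0$ gives $E[u, D_s]\le E[v, D_s]$ for all Lipschitz $v$; density of Lipschitz competitors in $W^{1,2}_u$ extends the inequality to all admissible $v$, proving $u$ is locally energy minimizing and hence harmonic.

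The main obstacle is the quantitative energy estimate for the glued competitor $v_k^\delta$: one must control the directional energies of a Sobolev family of CAT(1) geodesics whose endpoints vary in $W^{1,2}$, absorbing the angle mismatch between $u_k$ and $v$ on the annulus $A_\delta$. These convexity bounds are by now standard in the theory of harmonic maps into upper-curvature-bounded metric spaces and can be quoted from the Korevaar--Schoen and Serbinowski framework rather than re-derived here.
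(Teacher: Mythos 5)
Your proof is correct and the first half (gradient bound from Proposition \ref{GradientProp} $\Rightarrow$ uniform Lipschitz bounds and small image on definite-sized subdisks $\Rightarrow$ Arzel\`a--Ascoli) is exactly what the paper does. Where you diverge is in showing the limit is harmonic: the paper simply covers $D_{r/2}$ by small disks on which each $u_k$ is a genuine Dirichlet minimizer (via \cite[Remark 3.2]{Banff2}) and then quotes the compactness-of-minimizers theorem \cite[Theorem 2.3]{Banff2}, whereas you in-line the proof of that step via the standard geodesic-interpolation (``bridge'') competitor $v_k^\delta$, the convexity energy estimate on the annulus $A_\delta$, and lower semicontinuity of the Korevaar--Schoen energy. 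Your route is self-contained in spirit but trades one citation for two or three: the quantitative energy bound for a $W^{1,2}$ family of geodesics in a small CAT(1) ball (this is genuinely more delicate than the NPC case and is where Serbinowski's and \cite{Banff1}'s convexity lemmas are needed, so it cannot be waved through as purely ``Korevaar--Schoen''), the lower semicontinuity of energy under uniform convergence, and the density of Lipschitz maps in $W^{1,2}_u$ with prescribed trace. The last of these is the one real soft spot in your write-up: density of Lipschitz competitors for metric-space targets is not free, and the cleaner fix within your framework is to compare $u$ only against the unique Dirichlet solution with boundary data $\mathrm{Tr}(u)$ (which is Lipschitz in the interior and continuous up to the Lipschitz boundary data), then invoke uniqueness; this is essentially how \cite[Theorem 2.3]{Banff2} is organized. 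With that adjustment your argument is a complete, if longer, substitute for the paper's citation.
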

\begin{proof}Let $\epsilon_0, C_0$ be as in Proposition \ref{GradientProp}. Set $0<\epsilon_2  \leq \epsilon_0$. Then for all $x,y \in D_{3r/4}$ with $d_g(x,y)<\frac{\tau(X)}{C_0}r$ and all $k$, $d(u_k(x), u_k(y)) < \tau(X)$. Set $s:= \min\left\{\frac{\tau(X)}{C_0}r, \frac r{16}\right\}$. Cover $D_{r/2}$ by disks $\{D_{s/4}(x_j)\}$. By  \cite[Remark 3.2]{Banff2}, for all $k$, $u_k|_{D_{s}(x_i)}$ is energy minimizing. By \cite[Theorem 1.3]{Banff1}, the $u_k$ are equicontinuous on the cover $\{D_{s/2}(x_j)\}$. Therefore a subsequence $u_k \to u$ uniformly on every ball in this cover and thus on $D_{r/2}$. Applying \cite[Theorem 2.3]{Banff2} to each disk $D_{s/2}(x_j)$, we see that $u$ is energy minimizing on each disk $D_{s/4}(x_j)$. It follows that $u$ is harmonic on $D_{r/2}$.
\end{proof}

\subsection{Removable singularity theorem}
Notice that the work of this subsection extends the result of \cite[Theorem 3.6]{Banff2}, where a removable singularity theorem is proven for conformal harmonic maps. 
\begin{theorem}\label{RSThm}
Let $u:D_1 \backslash \{0\} \to X$ be a finite energy harmonic map. Then $u$ extends to a locally Lipschitz harmonic map $u:D_1 \to X$.
\end{theorem}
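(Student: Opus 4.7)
The plan is to adapt the classical Sacks--Uhlenbeck removable singularity strategy to the CAT(1) setting, using the tools of Proposition \ref{BTtools} in place of the Euler--Lagrange equation. First, the sup estimate yields a pointwise gradient bound near the puncture; second, a blow-up argument forces the diameter of the image on small annuli to vanish; third, the CAT(1) subharmonicity of $d^2(u(\cdot), P)$ confines the image of a sufficiently small punctured disk to a single geodesically convex ball around some limit point $P_\infty$; finally, the Dirichlet problem on that disk provides the extension, with uniqueness of energy minimizers together with the capacity-zero property of a point in dimension two identifying $u$ with this extension.

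By absolute continuity of the energy, choose $r_0 \in (0, \epsilon')$ so that $E[u, D_{r_0}] < \epsilon'$, where $\epsilon'$ is the constant from Proposition \ref{BTtools}(1). Applying the sup estimate on the disk $D_{|x|/2}(x) \subset D_{r_0} \setminus \{0\}$ for every $x$ with $0 < |x| < r_0/2$ yields the conformal bound
\[
|\nabla u|(x) \leq \frac{C}{|x|}.
\]
Writing $A(a,b) := \{z \in \mathbb{C} : a < |z| < b\}$, the rescalings $u_n(z) := u(r_n z)$ associated with any sequence $r_n \to 0$ are harmonic on $A(1/2, 2)$ with $E[u_n, A(1/2,2)] = E[u, A(r_n/2, 2r_n)] \to 0$. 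Covering $A(1/2, 2)$ by interior disks and applying Proposition \ref{BTtools}(3), a subsequence converges in $C^0$ to a harmonic map of zero energy, which must be constant. Hence $\diam(u(A(r/2, 2r))) \to 0$ as $r \to 0$. A parallel Courant--Lebesgue/Fubini averaging on the dyadic annuli $A_k := A(2^{-k-1} r_0, 2^{-k} r_0)$ also produces radii $s_k \to 0$ with $\mathrm{osc}_{\partial D_{s_k}}(u) \leq C \sqrt{E[u, A_k]} \to 0$.

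Let $K$ be large enough that the oscillation of $u$ on $\partial D_{s_K}$ is less than $\tau(X)/8$, and fix $P_K \in u(\partial D_{s_K})$. On any subdomain of $D_{s_K} \setminus \{0\}$ where $u$ maps into $\B_{\tau(X)}(P_K)$, the function $d^2(u(\cdot), P_K)$ is subharmonic; combining this subharmonicity with the vanishing annular diameters, a continuity and maximum-principle argument---applied to the open sublevel set $\{x : d(u(x), P_K) < \tau(X)\}$ and using that bounded subharmonic functions remove an isolated singularity in dimension two---shows that $u(D_{s_K} \setminus \{0\}) \subset \overline{\B_{\tau(X)/2}(P_K)}$. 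Having placed the image in a CAT(1) ball, we solve the Dirichlet problem via \cite[Lemma B.2]{Banff2} and \cite{serbinowski} to obtain a unique energy minimizer $v : \overline{D_{s_K}} \to \overline{\B_{\tau(X)}(P_K)}$ with the same boundary trace as $u|_{\partial D_{s_K}}$, and $v$ is locally Lipschitz by the interior regularity recalled before Definition \ref{def:min}. Since singletons have zero $W^{1,2}$-capacity in dimension two, the local energy-minimizing property of $u$ on $D_{s_K} \setminus \{0\}$ (Definition \ref{def:min}) combined with the CAT(1) convexity of energy upgrades, via a cutoff interpolation between $u$ and $v$ on shrinking disks at $0$, to global minimality of the trivial extension of $u$ on $D_{s_K}$; uniqueness of the Dirichlet minimizer then forces $u = v$ on $D_{s_K} \setminus \{0\}$, giving the desired locally Lipschitz harmonic extension.

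The main obstacle is the confinement step: upgrading the qualitative $\diam(u(A(r/2, 2r))) \to 0$ to the quantitative statement that the image of an entire small punctured disk lies in a single CAT(1) ball around some limit point. This requires a non-circular application of the maximum principle, since $d^2(u(\cdot), P_K)$ is subharmonic only on subdomains where $u$ already maps into $\B_{\tau(X)}(P_K)$; the argument is structured by first fixing the Courant--Lebesgue good circle $\partial D_{s_K}$ on which the oscillation is already small, then propagating the ball-containment inward by ruling out---using connectedness of the superlevel sets of $d(u(\cdot), P_K)$ together with the shrinking annular diameters---the possibility that $u$ exits $\B_{\tau(X)/2}(P_K)$ anywhere inside $D_{s_K} \setminus \{0\}$.
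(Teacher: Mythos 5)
Your overall route is genuinely different from the paper's: the paper never attempts a direct Sacks--Uhlenbeck-style argument. Instead it observes that the Hopf function $\Phi_u$ is holomorphic and $L^1$, hence has at worst a simple pole at $0$, passes to a double cover of the punctured disk so that $\sqrt{-\Phi_u}$ is single-valued, builds an auxiliary real harmonic function $v$ so that $(u,v)$ is a \emph{conformal} harmonic map into $X\times\R$, and then invokes the removable singularity theorem for conformal harmonic maps from \cite[Theorem 3.6]{Banff2} (whose proof rests on the monotonicity formula). Your proposal tries to avoid conformalization entirely, and the place where this fails is exactly the step you flag as the main obstacle: the confinement of $u(D_{s_K}\setminus\{0\})$ to a single CAT(1) ball.

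That step has a genuine gap. The information you have going into it is purely qualitative: $\diam\bigl(u(A(r/2,2r))\bigr)\to 0$ and $\mathrm{osc}_{\partial D_{s_k}}(u)\to 0$. This does not imply $\sum_k \diam\bigl(u(A_k)\bigr)<\infty$, so the image may drift by an arbitrarily large total amount across infinitely many dyadic annuli; nothing so far forces $u$ to have a limit at $0$ or to stay in any fixed ball around $P_K$. Your proposed patch cannot supply this. The maximum principle for the subharmonic function $d^2(u(\cdot),P_K)$, applied on an annulus $A(t,s_K)$ where the containment in $\B_{\tau(X)}(P_K)$ is already known, only bounds interior values by boundary values; it is perfectly consistent with $\sup_{\partial D_t} d(u(\cdot),P_K)$ increasing monotonically to $\tau(X)$ as $t\downarrow 0$, since the offending points then sit on the inner boundary circle where the maximum principle gives no information. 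Likewise the open/closed argument on $\{t:\sup_{\partial D_t}d(u(\cdot),P_K)\le \tau(X)/2\}$ fails: that set is closed by continuity but there is no mechanism making it open, and the shrinking annular diameters only say each circle's image is small, not that it is close to $P_K$. What is actually needed is a quantitative decay such as $E[u,D_r]\le Cr^{2\alpha}$ (equivalently, summable annular oscillations), which in the classical setting comes from a Pohozaev/stationarity identity and in this paper is encoded in the holomorphy of $\Phi_u$ together with the monotonicity formula for conformal maps --- precisely the ingredients your argument discards. Your final step (Dirichlet replacement plus the zero capacity of a point and uniqueness of minimizers) is sound once confinement is known, and indeed mirrors how \cite{Banff2} finishes, but the confinement is the heart of the theorem and is not established here.
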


\begin{proof}
Since $u$ has finite energy, the Hopf function $\Phi_u\in L^1(D_1 \backslash \{0\}, \mathbb C \backslash \{0\})$ and therefore $\Phi_u$ can have at worst a simple pole at the origin. Without loss of generality, assume that $\Phi_u$ is nowhere zero on $D_1\backslash \{0\}$. 
 
We now follow the ideas of Schoen \cite[Theorem 10.4]{Schoen-Analytic} to define a conformal harmonic map. Schoen's argument involves taking the square root of $-\Phi_u$, and since in his case the domain is a disk and the image does not contain the origin the square root function is well-defined. We build an admissible cell complex $W$ (see \cite[Section 2.1]{Banff1}, \cite[Section 2.2]{daskal-meseCAG}) such that $W \setminus W^{(0)}$ will be the double cover of $\mathbb C \setminus \{0\}$. We then lift the map $\Phi_u$ to be defined from this double cover, allowing us to take its square root.

Let $H_j:=\{z \in \mathbb C: \im(z) \geq 0\}$, $j=1, \dots, 4$ denote four $2$-cells and let $z_j = x_j+ i y_j$ denote the coordinates in the $2$-cell $H_j$. Define the $2$-complex $W:= \bigsqcup_{j=1}^4H_j / \sim$ where the similarity relations determine the gluing of $1$-cell boundaries and are non-empty relations only in the following cases:
\[
\left\{ \begin{array}{ll}
z_1 \sim z_2 &\text{ iff } \re(z_1)=\re(z_2) \leq 0, \im(z_1)=\im(z_2)=0,\\
z_2\sim z_3 &\text{ iff } \re(z_2)=\re(z_3) \geq 0, \im(z_2)=\im(z_3)=0,\\
z_3 \sim z_4 &\text{ iff } \re(z_3)=\re(z_4) \leq 0,\im(z_3)=\im(z_4)=0,\\
z_4 \sim z_1 &\text{ iff } \re(z_4)=\re(z_1) \geq 0,\im(z_4)=\im(z_1)=0.
\end{array}\right.
\]It is straightforward to see that $W\backslash W^{(0)}$ is a double cover of $\mathbb C\backslash \{0\}$. We will associate each $p \in W$ with a projection onto $\mathbb C$ using isometries of half-spaces.

Let $\psi_j:H_j \to \{z \in \mathbb C: \im(z) \geq 0\}$, $ \psi_j^-:H_j \to \{z \in \mathbb C: \im(\overline z) \geq 0\}$ denote the natural Euclidean isometries. For $p \in W$, we define $\re, \im:W \to \R$ such that 
\[
\re(p):=
\re(\psi_j(z_j)) \text{ if } p=z_j, j=1,\dots, 4,
\]
\[
\im(p):=\left\{ \begin{array}{ll}
\im(\psi_j(z_j)) &\text{ if } p=z_j, j=1,3,\\
\im(\psi_j^-(z_j)) & \text{ if } p = z_j, j=2,4.
\end{array}\right.
\]Let $\Pi: W \to \mathbb C$ such that 
$\Pi(p):= \re(p) + i \im(p)$.
We define $\underline u: W \backslash W^{(0)} \to X$ and $\underline \Phi_u: W \backslash W^{(0)} \to \mathbb C \backslash \{0\}$ such that 
\[
\underline u(p) := u \circ \Pi(p); \quad \quad \underline \Phi_u(p):= \Phi_u \circ \Pi (p).
\]Note that $(\Phi_u)_*(\pi_1(D_1 \backslash \{0\})) = n \mathbb Z$ for some $n \in \mathbb N$. It follows that $(\underline \Phi_u)_*(\pi_1(W \backslash W^{(0)}))=2n\mathbb Z \subset 2\mathbb Z$. Therefore, there exists a map $\Psi_u:W \backslash W^{(0)}\to \mathbb C \backslash \{0\}$ such that $\Psi_u^2(p) = \underline \Phi_u(p)$. 
Define $v:W \backslash W^{(0)} \to \R$ such that
\[
v(p):= \re \int_{p_0}^p\Psi_u(\zeta)    d\zeta
\]where $p_0 \in W \backslash W^{(0)}$. By construction, $v$ is a well-defined, real-valued harmonic function which is minimizing on every compact subset of $W \backslash W^{(0)}$. 
We compute
	\begin{equation*}
	\frac{\partial v}{ \partial z}(p) = \frac{1}{2} \re \Psi(p) \in \R. 
	\end{equation*} It follows that $E[v] \leq C\int_{D_1\backslash \{0\}} |\Phi_u| d\mu_g< \infty$ and thus $v$ has finite energy. Let  $\tilde{u} :W \backslash W^{(0)}\to X \times \R$ where
\[
 \tilde{u}(p) := \left(\underline u(p)  ,  v(p)    \right).
\]  By definition, $\tilde u$ is a finite energy harmonic map and
the Hopf differential of $\tilde{u}$ satisfies
	\begin{equation*}
	{\Phi_{\tilde u}} (p) = \Phi_u(p) + 4 \left(\frac{\partial v}{\partial z} \right)^2 (p) \equiv 0.
	\end{equation*}
Therefore, $\tilde{u} :W \backslash W^{(0)} \to X \times \R$ is a \emph{conformal} harmonic map. We apply \cite[Theorem 3.6]{Banff2} to prove the removable singularity result for $\tilde u$. Observe that the hypothesis of the cited theorem states that the target space is \emph{compact} locally CAT(1) and that the domain is a Riemann surface. Nevertheless, the theorem can still be applied. 

While the metric space $(X \times \R, d \times \delta)$ is obviously not compact, it remains a locally CAT(1) space. Moreover, for each $P \in X \times \R$, the closed geodesic ball $\overline{\mathcal B_{\tau(X)}(P)}\subset X \times \R$ is a compact locally CAT(1) space. It follows that for any $\rho \in (0, \tau(X))$ and any $y \in X \times \R$, $\tilde u$ is energy minimizing on the domain $\tilde u^{-1}(\mathcal B_{\rho}(P))$. The removable singularity theorem for conformal harmonic maps does not in fact require compactness of the target space but does require a uniform lower bound in the target for which harmonic maps are minimizers. (This uniform lower bound is needed in order to appeal to the monotonicity formula.) Our target possesses such a uniform lower bound.

Moreover, while our domain here is a cell complex, away from $W^{(0)}$ the complex is a Riemannian manifold with a smooth Riemannian metric.
Therefore everywhere we apply the arguments of  \cite[Theorems 3.4 and 3.6]{Banff2} the fact that the domain is a complex is irrelevant. It follows that $\tilde u$ extends as a locally Lipschitz harmonic map $\tilde u:W \to X \times \R$ and thus so does $u$.
\end{proof}
\section{Isoperimetric Inequality for Minimal Surfaces with Small Area} \label{IsoperimetricSection}
We prove an isoperimetric inequality for minimal surfaces with small area in a CAT(1) metric space. By a minimal surface we mean a conformal harmonic map $u : \left( \Sigma , g \right) \to X$ which is minimizing in the sense of Definition \ref{def:min}. For such a map $u$, we define the area of its image by integrating the conformal factor $\lambda= \frac 12|\nabla u|^2$:
\begin{equation*}
	\mathrm{Area} \left( \mathrm{image}(u) \right) = \int_{\Sigma}\; \lambda \; d\mu_g.
\end{equation*} 
To prove the isoperimetric inequality we follow the classical arguments of Hoffman-Spruck \cite{HoffmanSpruck} who prove the result by first proving a Sobolev inequality for $C^1$ functions. 

We begin by improving the weak differential inequality satisfied by $d^2(u(x),Q)$ for some fixed $Q\in X$. 
\begin{lemma}\label{lem:2.5-modified} 
Given a geodesic triangle $\triangle PQS\subset X$ and $0 \leq \eta, \eta' \leq 1$, let $P_{\eta'}:= (1-\eta')P + \eta'Q$ and $S_\eta:=(1-\eta)S + \eta Q$. Then\begin{align*}
d^2 \left( P_{\eta'} ,  S_\eta \right) &\le \left( 1 - 2  \eta d_{QS} \cot d_{QS} \right) d^2_{PS} - 2 \left( \eta - \eta' \right) \left( d_{QS} - d_{QP} \right) d_{QS} + (\eta' - \eta)^2 d_{QS}^2  \\ 
		& \quad+\mathrm{Quad}(\eta, \eta') \mathrm{Quad}(d_{PS}, d_{QS}-d_{QP})+ \mathrm{Cub}\left( d_{PS}, d_{QS}-d_{QP}, \eta-\eta' \right).
\end{align*}
\end{lemma}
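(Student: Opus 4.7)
The plan is to pass to the model space $\mathbb S^2$ via the CAT(1) condition and then expand the resulting spherical identity to the appropriate order in the small quantities $\delta := d_{PS}$, $\mu := d_{QS} - d_{QP}$, and $\nu := \eta - \eta'$. Let $\triangle \tilde P \tilde Q \tilde S \subset \mathbb S^2$ be a comparison triangle for $\triangle PQS$ with the same side lengths, and let $\tilde P_{\eta'}$ and $\tilde S_\eta$ be the corresponding interpolated points on the comparison geodesics from $\tilde P$ and $\tilde S$ to $\tilde Q$. By the CAT(1) condition, $d^2(P_{\eta'}, S_\eta) \leq d^2_{\mathbb S^2}(\tilde P_{\eta'}, \tilde S_\eta)$, so it suffices to bound the spherical distance on the right.

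Next, we apply the spherical law of cosines in $\triangle \tilde P_{\eta'} \tilde Q \tilde S_\eta$:
\[
\cos d_{\mathbb S^2}(\tilde P_{\eta'}, \tilde S_\eta) = \cos((1-\eta')d_{QP})\cos((1-\eta)d_{QS}) + \sin((1-\eta')d_{QP})\sin((1-\eta)d_{QS})\cos\alpha,
\]
where $\alpha$ is the angle at $\tilde Q$. Eliminating $\cos\alpha$ via the same law applied to $\triangle \tilde P \tilde Q \tilde S$,
\[
\sin d_{QP}\sin d_{QS}\cos\alpha = \cos d_{PS} - \cos d_{QP}\cos d_{QS},
\]
yields a closed-form expression for $\cos d_{\mathbb S^2}(\tilde P_{\eta'}, \tilde S_\eta)$ depending only on $d_{PS}$, $d_{QP}$, $d_{QS}$, $\eta$, and $\eta'$.

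We then Taylor expand both sides in $(\delta, \mu, \nu)$, treating $d_{QS}$ (assumed bounded away from $\pi$) and $\eta$ as fixed parameters. Using $\cos x = 1 - \tfrac12 x^2 + O(x^4)$ on the left converts the identity to one for $d^2_{\mathbb S^2}(\tilde P_{\eta'}, \tilde S_\eta)$. On the right, the $\delta^2$ coefficient simplifies to $1 - 2\eta d_{QS}\cot d_{QS}$ (the cotangent factor emerges from expanding $\sin((1-\eta)d_{QS})/\sin d_{QS}$), the mixed $\nu\mu$ coefficient becomes $-2 d_{QS}$, and the pure $\nu^2$ coefficient becomes $d_{QS}^2$. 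All other contributions are either at least cubic in $(\delta, \mu, \nu)$, or factor as a quadratic polynomial in $(\eta,\eta')$ multiplied by a quadratic polynomial in $(\delta, \mu)$; these are absorbed into the $\mathrm{Cub}$ and $\mathrm{Quad}\cdot\mathrm{Quad}$ remainders in the statement.

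The main obstacle is the careful bookkeeping of this Taylor expansion. In particular, one must be vigilant that the denominator $\sin d_{QP}\sin d_{QS}$ (which differs from $\sin^2 d_{QS}$ by a term of order $\mu$) still produces the clean $\cot d_{QS}$ factor in the $\delta^2$ coefficient, and that the cross term $\nu\mu$ comes out with the correct sign and prefactor. This requires expanding each trigonometric factor to third order throughout and then grouping terms carefully according to their total degree in $(\delta, \mu, \nu)$; once this bookkeeping is complete, the lemma follows immediately from the CAT(1) comparison above.
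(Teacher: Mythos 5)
Your proposal is correct and takes essentially the same route as the paper: the paper's proof simply invokes \cite[Lemmas 2.4 and 2.5]{Banff1} --- which are precisely the CAT(1) comparison-triangle and spherical law-of-cosines computation you carry out --- and observes that one keeps the identity $\sin^2((1-\eta)d_{QS})/\sin^2 d_{QS} = \left(1-\eta d_{QS}\cot d_{QS}+O(\eta^2)\right)^2$ as an expansion rather than discarding it for an upper bound. Your explicit elimination of $\cos\alpha$ and the Taylor expansion in $(d_{PS},\, d_{QS}-d_{QP},\, \eta-\eta')$ is exactly the content of those cited lemmas, so the two arguments coincide.
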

\begin{proof}
The proof follows from \cite[Lemmas 2.4 and 2.5]{Banff1} by keeping and expanding the equality 
\[
 \frac{\sin^2((1-\eta) d_{QS})}{\sin^2 d_{QS}} = \left(1 - \eta \frac {d_{QS}}{\sin d_{QS}} \cos d_{QS} +O(\eta^2)\right)^2
\]rather than getting an upper estimate.
\end{proof}
We now prove a modification of \cite[Lemma 4.3]{Banff1}, which implies almost subharmonicity for $d(Q,u(x))$. 
\begin{lemma}\label{lem:div-thm}
	Let $0 < t < \tau(X)$ and $u : \left( D_r , g   \right) \to \mathcal{B}_t (P) \subset X$ be an energy minimizing map. For a fixed $Q \in \mathcal{B}_t (P) $, $\eta \in [0,1]$, and all $0 < \sigma \le r$, 
	\begin{equation*}
			\int_{D_\sigma} \;   2 \eta \; \hat{d} \; \cot \hat{d} \;  \left| \nabla u  \right|^2  d\mu_g \le 	-\int_{D_\sigma} \; \left<  \nabla \eta , \nabla \hat{d}^2 \right> d\mu_g ;
	\end{equation*}
where $\hat{d} (x) := d \left(  Q , u(x) \right)$.
\end{lemma}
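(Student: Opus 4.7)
The approach is a first-variation argument along geodesic interpolations with $Q$, exactly in the spirit of \cite[Lemma 4.3]{Banff1}, but now feeding in the sharper local expansion from Lemma \ref{lem:2.5-modified} in place of the cruder bound used there; this is what produces the factor $\hat d\cot \hat d$. For a Lipschitz cutoff $\eta$ supported in $D_\sigma$ with $0\le\eta\le1$ and any $s\in[0,1]$, define the comparison map $u_s(x) := (1-s\eta(x))\, u(x) + s\eta(x)\, Q$. Since $\tau(X)\le \pi/4$, the ball $\mathcal{B}_t(P)$ is geodesically convex, so $u_s$ is an admissible competitor taking values in $\overline{\mathcal{B}_t(P)}$ and agreeing with $u$ on $\partial D_\sigma$. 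Energy minimality of $u$ on $D_\sigma$ therefore yields $E[u,D_\sigma]\le E[u_s,D_\sigma]$.

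Next, I would apply Lemma \ref{lem:2.5-modified} pointwise to the triangle $\triangle u(x)\,Q\,u(y)$ with the choices $\eta' = s\eta(x)$ and $\eta = s\eta(y)$. Writing $d_{QS} = \hat d(y)$, this gives
\begin{align*}
d^2(u_s(x), u_s(y)) &\leq \bigl(1 - 2s\, \eta(y)\, \hat d(y)\cot \hat d(y)\bigr)\, d^2(u(x), u(y)) \\
&\quad - 2s\, (\eta(y) - \eta(x))\bigl(\hat d(y) - \hat d(x)\bigr)\, \hat d(y) + R(x,y;s),
\end{align*}
where $R(x,y;s)$ collects the $s^2(\eta(x)-\eta(y))^2\hat d(y)^2$ term together with the cubic remainder from the lemma. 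Passing to the Korevaar--Schoen directional-density limit (dividing by $|x-y|^2$, averaging over small spheres around $x$, and using that both $u$ and $\hat d$ are Lipschitz on any compactly contained subdomain) produces the pointwise inequality
\[
|\nabla u_s|^2(x) \leq \bigl(1 - 2s\, \eta(x)\, \hat d(x)\cot \hat d(x)\bigr)\, |\nabla u|^2(x) - 2s\, \hat d(x)\,\langle \nabla \eta, \nabla \hat d\rangle(x) + \rho(x;s),
\]
with $\rho(\,\cdot\,;s)$ uniformly of order $O(s^2)$ in $L^1(D_\sigma)$.

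To finish, I would integrate over $D_\sigma$, invoke $E[u,D_\sigma]\le E[u_s,D_\sigma]$, divide by $s$, and let $s\to 0^+$. The minimality inequality collapses to
\[
0 \leq -2\int_{D_\sigma} \eta\, \hat d\cot \hat d\, |\nabla u|^2\, d\mu_g - 2\int_{D_\sigma} \hat d\, \langle \nabla \eta, \nabla \hat d\rangle\, d\mu_g,
\]
and the identity $2\hat d\,\nabla \hat d = \nabla \hat d^2$ rearranges this into the claim for Lipschitz $\eta$ supported in $D_\sigma$; a standard approximation handles the general case. The main obstacle is the bookkeeping of the remainder $\rho(\,\cdot\,;s)$: one must justify that $(\eta(y)-\eta(x))^2\hat d(y)^2$ and the cubic tail from Lemma \ref{lem:2.5-modified} really contribute only at order $O(s^2)$ after KS-averaging, which in turn relies on the a priori Lipschitz bound for $u$ to control $d(u(x),u(y))/|x-y|$ together with the Lipschitz regularity of $\eta$. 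Once this is in hand, the argument is a direct analog of \cite[Lemma 4.3]{Banff1}.
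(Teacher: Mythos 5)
Your proposal is correct and follows essentially the same route as the paper: define the geodesic interpolation with $Q$ cut off by $\eta$, apply Lemma \ref{lem:2.5-modified} pointwise (your choice of which endpoint plays $S$ is just the mirror of the paper's and is immaterial after the Korevaar--Schoen averaging), pass to energy densities, and invoke minimality; the paper simply defers the remaining limit bookkeeping to the proof of \cite[Lemma 4.3]{Banff1}, where the rescaling by $s$ and the $s\to 0^+$ limit that you spell out is exactly what disposes of the quadratic and cubic remainder terms.
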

\begin{proof}
	Define 
$u_{\eta}:(D_\sigma,g) \rightarrow X$ 
by setting
\[
u_{\eta}(x)=(1-\eta(x)) u(x)+\eta(x) Q
\]
for $\eta \in C^{\infty}_c (D_\sigma)$. 
Letting $S = u(x),P = u(y),  \eta' =\eta(y)$, we use the estimate of Lemma \ref{lem:2.5-modified} to observe that  for $\hat d(x):= d(Q, u(x))$, 
\begin{align*}
d^2(u_\eta(y), u_\eta(x)) &\leq (1- 2\eta(x)\hat d(x)\cot(\hat d(x)))d^2(u(x),u(y)) \\& \quad
-2(\eta(x)-\eta(y))(\hat d(x)-\hat d(y))\hat d(x)\\
& \quad +(\eta(y)-\eta(x))^2\hat d^2(x) + \eta^2(x)\mathrm{Quad}(d(u(x),u(y)), \hat d(x)-\hat d(y)) \\
&\quad + \mathrm{Cub}\left( d(u(x),u(y)), \hat d(x)-\hat d(y), \eta(x)-\eta(y) \right).
\end{align*} The rest of the proof is identical to the rest of the proof of \cite[Lemma 4.3]{Banff1}.  
\end{proof}

\begin{lemma}
	Let $u: \Sigma\to X$ be a conformal harmonic map. Suppose $\xi \in C^1 \left( -\infty , \infty  \right)$ is a non-decreasing function such that $\xi(t) = 0$ for $t\le 0$, $h \in C_0^1(\Sigma)$ is a non-negative function, and $\xi h \in [0,1]$. For $x_0 \in \Sigma$ and $0<\rho<\tau(X)$, define
	\begin{equation*}
		\phi_{x_0} (\rho) := \int_\Sigma \;  h(x)  \; \xi (\rho - r(x)) \; \lambda(x) d\mu_g;
	\end{equation*}
and
	\begin{equation*}
	\psi_{x_0}(\rho) := \int_\Sigma  \;  \left| \nabla h \right|(x) \; \xi (\rho - r(x)) \; \lambda^{\frac{1}{2}}(x) d\mu_g
 	\end{equation*}	
 	where $r(x) := d\left( u(x) , u(x_0)  \right)$. Then the following differential inequality holds weakly:
\begin{equation}\label{eq:iso-1}
		- \frac{d}{d \rho} \left( \frac{\phi_{x_0}(\rho)}{\sin^2 \rho} \right) \le \frac{\psi_{x_0}(\rho)}{\sin^2\rho}.
\end{equation}
\end{lemma}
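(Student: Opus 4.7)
The plan is to reduce the stated weak inequality to a pointwise inequality in $\rho$ and then extract that pointwise inequality from Lemma \ref{lem:div-thm} applied with the test function $\eta(x) = h(x)\xi(\rho-r(x))$. A direct computation yields
\[
-\frac{d}{d\rho}\left(\frac{\phi_{x_0}(\rho)}{\sin^2\rho}\right) = \frac{2\phi_{x_0}(\rho)\cot\rho - \phi_{x_0}'(\rho)}{\sin^2\rho},
\]
so on $(0,\tau(X))$ it suffices to prove $2\phi_{x_0}\cot\rho - \phi_{x_0}' \leq \psi_{x_0}$ weakly in $\rho$. Since $\xi \in C^1$ and $h$ is compactly supported, differentiation under the integral sign is legal and gives $\phi_{x_0}'(\rho) = \int_\Sigma h\,\xi'(\rho-r)\,\lambda\,d\mu_g$, recasting the target inequality entirely as an inequality of integrals over $\Sigma$.

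Next I would apply Lemma \ref{lem:div-thm} with $Q := u(x_0)$ (so $\hat d = r$) and test function $\eta := h\,\xi(\rho-r)$, which lies in $[0,1]$ by hypothesis and is compactly supported; its mere $C^1$ regularity is handled by standard mollification. The product and chain rules give
\[
\nabla\eta = \xi(\rho-r)\,\nabla h - h\,\xi'(\rho-r)\,\nabla r, \qquad \nabla(r^2) = 2r\,\nabla r,
\]
so that $-\langle\nabla\eta,\nabla r^2\rangle = -2r\xi(\rho-r)\langle\nabla h,\nabla r\rangle + 2rh\,\xi'(\rho-r)\,|\nabla r|^2$. Conformality and harmonicity of $u$ force the two principal directional energies to coincide, yielding $|\nabla u|^2 = 2\lambda$; moreover, because $u_*(e_1)$ and $u_*(e_2)$ are then orthogonal of equal length $\sqrt\lambda$, pulling the $1$-Lipschitz gradient of $d(Q,\cdot)$ back through $u_*$ yields the sharp bound $|\nabla r|^2 \leq \lambda$.

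Now I would exploit that $\xi(\rho-r)$ vanishes where $r > \rho$, so all integrations occur over $\{r \leq \rho\}$. Since $t\mapsto t\cot t$ decreases on $(0,\pi)$, substituting $|\nabla u|^2 = 2\lambda$ bounds the left-hand side of Lemma \ref{lem:div-thm} below by $4\rho\cot\rho\cdot\phi_{x_0}(\rho)$, while Cauchy-Schwarz with $|\nabla r| \leq \sqrt\lambda$ and $r \leq \rho$ yields
\[
-2\int_\Sigma r\,\xi(\rho-r)\langle\nabla h,\nabla r\rangle\,d\mu_g \leq 2\rho\,\psi_{x_0}(\rho),
\]
\[
2\int_\Sigma rh\,\xi'(\rho-r)|\nabla r|^2\,d\mu_g \leq 2\rho\,\phi_{x_0}'(\rho).
\]
Summing and dividing by $2\rho > 0$ delivers $2\phi_{x_0}\cot\rho - \phi_{x_0}' \leq \psi_{x_0}$, as required. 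The main technical obstacle is the sharp bound $|\nabla r|^2 \leq \lambda$: a naive chain rule only gives $|\nabla r|^2 \leq |\nabla u|^2 = 2\lambda$, and this extra factor of two would break the clean cancellation by $2\rho$ in the last step. Conformality, via orthogonality and equality of the principal directional energies, is precisely what enables the improvement; a secondary but routine point is the applicability of Lemma \ref{lem:div-thm} with a $C^1$ rather than $C^\infty$ test function, handled by mollification.
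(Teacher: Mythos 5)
Your proposal is correct and follows essentially the same route as the paper: reformulate the weak inequality as $2\cot\rho\,\phi_{x_0}\le\psi_{x_0}+\phi'_{x_0}$, apply Lemma \ref{lem:div-thm} with $Q=u(x_0)$ and $\Psi=h\,\xi(\rho-r)$, and use conformality ($|\nabla u|^2=2\lambda$, $|\nabla r|^2\le\lambda$) together with $r\le\rho$ on the support of $\xi(\rho-\cdot)$ and the monotonicity of $t\cot t$. You correctly flag the sharp bound $|\nabla r|^2\le\lambda$ as the point where conformality is essential, which is exactly the step the paper's chain of inequalities relies on.
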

\begin{proof}
First note that \eqref{eq:iso-1} is equivalent to 
\begin{equation}\label{form2}
2 \cot \rho\, \phi_{x_0}(\rho) \le  \psi_{x_0}(\rho) +  \phi'_{x_0}(\rho).
\end{equation} By Lemma~\ref{lem:div-thm}, for any test function $\Psi\in [0,1]$ and $x_0 \in \Sigma$, we have that
\begin{equation*}
\int_{\Omega} \;    2 \Psi r \cot r \left| \nabla u  \right|^2  d\mu_g \le -	\int_{\Omega} \; \left<  \nabla \Psi , \nabla r^2 \right> d\mu_g 
\end{equation*}
where $\Omega:= u^{-1} \left( \B_\rho (u(x_0))      \right) $. 
Let $\Psi (x) = h(x) \xi \left( \rho - r(x) \right)$ so that 
\begin{equation*}
	\nabla \Psi (x) = -h(x) \xi'\left( \rho - r(x)  \right) \nabla r(x) + \xi \left( \rho - r(x)  \right) \nabla h(x).
\end{equation*}
By conformality and given the support of $\xi, \xi'$, it follows that
\begin{eqnarray}
2 \rho \cot \rho \; \int_{\Sigma} \;  \Psi \; \lambda d\mu_g  &\le& \int_{\Sigma} \;   \Psi r \cot r   \; |\nabla u|^2 d\mu_g \notag \\ &\le&    \int_{\Sigma} \; r(x) h(x) \xi'\left( \rho - r(x)  \right) \left|\nabla r(x) \right|^2 \; d\mu_g \notag \\ && -  \int_{\Sigma} \; r(x) \xi \left( \rho - r(x)  \right)\langle \nabla h(x) , \nabla r(x) \rangle\; d\mu_g \notag \\ &\le&    \notag \int_{\Sigma} \; r(x) h(x) \xi'\left( \rho - r(x)  \right) \; \lambda d\mu_g  \notag \\ && + \int_{\Sigma} \; r(x) \xi \left( \rho - r(x)  \right) \left| \nabla h(x) \right|  \lambda^{\frac{1}{2}} \; d\mu_g \notag \\ &\le& \rho \int_{\Sigma} \; h(x) \xi'\left( \rho - r(x)  \right) \; \lambda d\mu_g \notag \\ && + \rho \int_{\Sigma} \;  \xi \left( \rho - r(x)  \right) \left| \nabla h(x) \right|  \lambda^{\frac{1}{2}} \; d\mu_g. \notag 
\end{eqnarray}
Note that the string of inequalities implies that \eqref{form2} holds weakly.
\end{proof}
\begin{lemma}
	Let $u: \left(\Sigma , g \right) \to X$ be a minimal surface. Let $x_0 \in \Sigma$ with $h(x_0) \ge 1$. Let $\alpha$ and $t$ satisfy $0< \alpha < 1 \le t$. Set 
\begin{equation*}
	\rho_0 := \sin^{-1} \left( \frac{\int_\Sigma h(x) \; \lambda (x) \; d \mu_g}{\pi (1 - \alpha)}    \right)^{\frac 12} , 
\end{equation*} 
\begin{equation*}
\overline{\phi}_{x_0} (\rho) := \int_{S_\rho (x_0)} h(x) \lambda(x) d\mu_g , 
\end{equation*}
and 
\begin{equation*}
\overline{\psi}_{x_0}(\rho) := \int_{S_\rho (x_0)} \left| \nabla h(x) \right| \lambda^{\frac{1}{2}}(x) d\mu_g
\end{equation*}		where
\[
S_\rho(x_0):=\{ x \in \Sigma: d(u(x), u(x_0))< \rho\}.
\]
Then there exist $\rho$ with $0 < \rho < \rho_0$ such that 
\begin{equation*}
	\overline{\phi}_{x_0} (t \rho)  \le \alpha^{-1} \rho_0 \overline{\psi}_{x_0}(\rho);
\end{equation*}	
provided that 
\begin{equation*}
	\frac{\int_\Sigma h(x) \; \lambda (x) \; d \mu_g}{\pi (1 - \alpha)} \le 1
\end{equation*}	
and 
\begin{equation*}
	t \rho_0 \le \tau(X). 
\end{equation*}	
\end{lemma}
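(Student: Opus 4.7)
\medskip

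The plan is to argue by contradiction using the weak differential inequality from the preceding lemma, combined with a density lower bound at $x_0$ (coming from $h(x_0) \geq 1$) and the smallness of $\int_\Sigma h\lambda$ built into the definition of $\rho_0$.

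First, I would pass the weak inequality \eqref{eq:iso-1} to the step-function limit: taking a sequence of smooth non-decreasing $\xi_k$ vanishing on $(-\infty,0]$ and increasing to $\chi_{(0,\infty)}$, monotone convergence gives $\phi_{x_0} \to \overline{\phi}_{x_0}$ and $\psi_{x_0}\to\overline{\psi}_{x_0}$ and the distributional inequality
\[
-\frac{d}{d\rho}\left(\frac{\overline{\phi}_{x_0}(\rho)}{\sin^2\rho}\right) \;\leq\; \frac{\overline{\psi}_{x_0}(\rho)}{\sin^2\rho}
\]
on $(0,\tau(X))$.  Next, using $h(x_0)\geq 1$ together with the monotonicity formula \cite[Theorem 3.4]{Banff2}, the density of the image surface at $u(x_0)$ is at least $1$, so by continuity of $h$
\[
\liminf_{\rho\to 0^+}\frac{\overline{\phi}_{x_0}(\rho)}{\sin^2\rho}\;\geq\;\pi,
\]
while the defining property of $\rho_0$ yields
\[
\frac{\overline{\phi}_{x_0}(\rho_0)}{\sin^2\rho_0}\;\leq\;\frac{\int_\Sigma h\lambda\,d\mu_g}{\sin^2\rho_0}\;=\;\pi(1-\alpha).
\]

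Integrating the distributional inequality from $0^+$ to $\rho_0$ and combining the two one-sided bounds produces the key deficit estimate
\[
\int_0^{\rho_0}\frac{\overline{\psi}_{x_0}(s)}{\sin^2 s}\,ds\;\geq\;\pi\alpha.
\]
Now suppose, for contradiction, that the conclusion fails, i.e.\ $\overline{\psi}_{x_0}(\rho)<\alpha\rho_0^{-1}\overline{\phi}_{x_0}(t\rho)$ for all $\rho\in(0,\rho_0)$.  Plugging this into the deficit and changing variables via $u=t\rho$, together with $\sin(u/t)\geq \sin(u)/t$ (valid for $t\geq 1$ and $u\leq\pi/2$ since $\sin x/x$ is decreasing), leads to
\[
\pi\rho_0 \;<\;\int_0^{\rho_0}\frac{\overline{\phi}_{x_0}(t\rho)}{\sin^2\rho}\,d\rho\;\leq\; t\int_0^{t\rho_0}\frac{\overline{\phi}_{x_0}(u)}{\sin^2 u}\,du.
\]

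The main obstacle is closing the last inequality: one needs to establish the upper bound $t\int_0^{t\rho_0}\overline{\phi}_{x_0}(u)/\sin^2u\,du\leq \pi\rho_0$, which cannot come from the naive total-mass bound $\overline{\phi}_{x_0}(u)\leq \int h\lambda=\pi(1-\alpha)\sin^2\rho_0$ alone because of the $1/\sin^2 u$ singularity at the origin.  I would overcome this by a bootstrap in the spirit of Hoffman-Spruck \cite{HoffmanSpruck}: feed the contradiction hypothesis $\overline{\psi}_{x_0}<\alpha\rho_0^{-1}\overline{\phi}_{x_0}(t\cdot)$ back into the integrated form of the differential inequality to produce successively sharper decay estimates $\overline{\phi}_{x_0}(u)/\sin^2u\leq \pi(1-\alpha)+o_{u\to 0}(1)$, using at each stage the conformality of $u$ and the local Lipschitz bound of \cite[Theorem 1.3]{Banff1} to control $\overline{\phi}_{x_0}(u)$ by $Cu^2$ near $u=0$.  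Substituting the resulting pointwise estimate into $t\int_0^{t\rho_0}\overline{\phi}_{x_0}(u)/\sin^2u\,du$ and using $t\rho_0\leq \tau(X)$ so that $\sin$ is comparable to its argument, the right-hand side is strictly less than $\pi\rho_0$, contradicting the displayed inequality above.
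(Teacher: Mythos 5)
Your setup matches the route the paper intends (the paper's own ``proof'' is a one-line deferral to \cite[Lemma 4.2]{HoffmanSpruck}, arguing by contradiction from \eqref{eq:iso-1}), and the first half of your argument is sound: passing $\xi$ to a step function, the density bound $\liminf_{\rho\to 0}\overline{\phi}_{x_0}(\rho)/\sin^2\rho\ge\pi$ from $h(x_0)\ge 1$, the endpoint bound $\overline{\phi}_{x_0}(\rho_0)/\sin^2\rho_0\le\pi(1-\alpha)$ from the definition of $\rho_0$, and the resulting deficit estimate $\int_0^{\rho_0}\overline{\psi}_{x_0}(s)\sin^{-2}s\,ds\ge\pi\alpha$ are all correct and are exactly the ingredients of the Hoffman--Spruck scheme.

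The closing step, however, is a dead end, not merely an unfinished estimate. After inserting the contradiction hypothesis you arrive at $\pi\rho_0<\int_0^{\rho_0}\overline{\phi}_{x_0}(ts)\sin^{-2}s\,ds$, and you then try to show the right-hand side is $<\pi\rho_0$. But this integral is genuinely that large: since $\overline{\phi}_{x_0}$ is non-decreasing and $t\ge1$, the integrand dominates $\overline{\phi}_{x_0}(s)/\sin^2 s$, whose $\liminf$ at $0$ is $\ge\pi$ by the very density bound you invoked two steps earlier; combined with the near-monotonicity of $\overline{\phi}_{x_0}(\rho)/\sin^2\rho$ this forces the integral to be of order $\pi t^2\rho_0\ge\pi\rho_0$, with no contradiction. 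In particular the bootstrap conclusion you propose, $\overline{\phi}_{x_0}(u)/\sin^2 u\le\pi(1-\alpha)+o_{u\to0}(1)$, is false --- it directly contradicts $\liminf_{u\to0}\overline{\phi}_{x_0}(u)/\sin^2u\ge\pi$ --- and even if $\overline{\phi}_{x_0}(u)/\sin^2u\le\pi(1-\alpha)$ held everywhere, the resulting bound $t^2(1-\alpha)\pi\rho_0$ beats $\pi\rho_0$ only when $t^2(1-\alpha)<1$, which fails for the values of $t$ the covering argument needs. The loss occurs when you integrate the pointwise hypothesis $\overline{\psi}_{x_0}(s)<\alpha\rho_0^{-1}\overline{\phi}_{x_0}(ts)$ against the non-integrable weight $\sin^{-2}s$ all the way down to $s=0$. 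The Michael--Simon/Hoffman--Spruck argument avoids this: it feeds the contradiction hypothesis into the differential inequality \eqref{form2} pointwise and uses the boundedness and monotonicity of $\overline{\phi}_{x_0}$ (so that $\overline{\phi}_{x_0}(t\rho)$ is comparable to $\overline{\phi}_{x_0}(\rho)$ on a sufficiently rich set of radii --- this is precisely what the factor $\rho_0$, rather than $\rho$, in the conclusion and the hypothesis $t\rho_0\le\tau(X)$ buy you) to propagate the bound $\overline{\phi}_{x_0}(\rho)/\sin^2\rho>\pi(1-\alpha)$ from $\rho=0^+$ up to $\rho=\rho_0$, contradicting the endpoint bound there. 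You should replace your final paragraph with that selection argument.
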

\begin{proof}
	The proof follows exactly the outline of \cite[Lemma 4.2] {HoffmanSpruck}, taking advantage of the differential inequality in \eqref{eq:iso-1} to establish a contradiction. 
\end{proof}
An argument similar to the covering argument used in \cite[Theorem 2.1]{HoffmanSpruck} (see also \cite{MichaelSimon}) immediately implies the following lemma. 
\begin{lemma}\label{lem:sob-type}
	Let $u: \left( \Sigma,g  \right) \to X$ be a conformal harmonic map with $\mathrm{image}(u) \subset \B_{\tau(X)} (P)$. If $ \mathrm{Area}\left[ u \left(   \Sigma  \right)  \right] \le \frac{\pi}{3} $,
then for any $h \in C^1(\Sigma)$,  
	\begin{equation*}
	\left( \int_\Sigma \; h^2(x) \;  \lambda(x) \; d \mu_g \right)^{\frac{1}{2}} \le \left( \frac{27 \pi}{4} \right)^{\frac{1}{2}} \int_\Sigma \; \left| \nabla h   \right|(x) \; \lambda^{\frac{1}{2}}\; d \mu_g. 
	\end{equation*}
\end{lemma}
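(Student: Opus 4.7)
The plan is to adapt the Vitali $5r$-covering argument of Hoffman-Spruck \cite[Thm.~2.1]{HoffmanSpruck} (see also \cite{MichaelSimon}) to the CAT(1) setting, using the pointwise comparison of the previous lemma as the key input. After replacing $h$ by $|h|$ we may assume $h\ge 0$, and by the scale invariance of both sides of the claim under $h\mapsto ch$ we may normalize so that, for fixed parameters $\alpha\in(0,1)$ and $t\ge 1$ with $t\rho_0\le\tau(X)$, the integrability hypotheses of the previous lemma are met.

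At each $x_0$ of an appropriately rescaled super-level set of $h$, the previous lemma provides a radius $\rho_{x_0}<\rho_0$ with $\overline{\phi}_{x_0}(t\rho_{x_0})\le \alpha^{-1}\rho_0\,\overline{\psi}_{x_0}(\rho_{x_0})$. I would then apply the Vitali $5r$-covering lemma---valid in every metric space---to the family of target balls $\{\mathcal{B}_{\rho_{x_0}}(u(x_0))\}\subset X$, producing a countable pairwise disjoint subfamily $\{\mathcal{B}_{\rho_i}(u(x_i))\}$ whose $5$-fold enlargements still cover the image of the super-level set. Since $u$-preimages of disjoint subsets of $X$ are disjoint subsets of $\Sigma$, summing the pointwise estimate over this disjoint subfamily yields an $L^1$-type bound
\[
\int_{\{h\ge s\}}h\,\lambda\,d\mu_g \;\le\; \alpha^{-1}\rho_0(s)\int_{\Sigma}|\nabla h|\lambda^{1/2}\,d\mu_g
\]
valid for each level $s$ above an admissible threshold; a standard approximation argument converts this into an isoperimetric-type inequality $\bigl(\int_{\Omega}\lambda\,d\mu_g\bigr)^{1/2}\le C\int_{\partial\Omega}\lambda^{1/2}$ for the relevant sublevel sets $\Omega=\{h>t\}$.

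To upgrade to the claimed $L^2$-Sobolev inequality I would use the classical dimension-two route: Minkowski's integral inequality applied to the layer-cake decomposition $h=\int_0^\infty \chi_{\{h>t\}}\,dt$ gives
\[
\Bigl(\int_\Sigma h^2\lambda\,d\mu_g\Bigr)^{1/2}\;\le\;\int_0^\infty \Bigl(\int_{\{h>t\}}\lambda\,d\mu_g\Bigr)^{1/2}dt;
\]
the isoperimetric estimate above bounds each integrand by the pullback perimeter, and the coarea formula $\int_0^\infty \int_{\{h=t\}}\lambda^{1/2}\,dt=\int_\Sigma |\nabla h|\lambda^{1/2}\,d\mu_g$ closes the chain and produces an inequality of the desired form.

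The main obstacle I foresee is not the overall strategy---which is routine given the pointwise lemma---but the bookkeeping required to obtain exactly the sharp constant $(27\pi/4)^{1/2}$. This requires optimizing $\alpha\in(0,1)$ and $t\ge 1$ subject to $t\rho_0\le\tau(X)\le\pi/4$ and to the area bound $\mathrm{Area}(u(\Sigma))\le \pi/3$, while carefully tracking the factor $\sin^{-1}(x)\le(\pi/2)x$ used to estimate $\rho_0$. The covering combinatorics in $X$ and the geometric arguments themselves are standard; the CAT(1) structure of the target has already been absorbed into the pointwise lemma, so the remainder is essentially identical to the smooth case treated in \cite{HoffmanSpruck,MichaelSimon}.
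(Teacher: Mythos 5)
Your proposal follows essentially the same route as the paper, whose entire proof of this lemma is the single sentence that ``an argument similar to the covering argument used in Hoffman--Spruck, Theorem 2.1 (see also Michael--Simon) immediately implies'' the result: one extracts a disjoint subfamily of target balls from the radii supplied by the preceding lemma, sums the resulting estimates over the (automatically disjoint) preimages $S_{\rho_i}(x_i)$, and upgrades the level-set inequality to the $L^2$ bound by the standard layer-cake/coarea argument. The one point you correctly flag but do not resolve --- and which the paper does not address either --- is that the covering enlargement factor (your $5$, or $3$ for a greedy selection covering centers) must be taken as the parameter $t$ in the preceding lemma and hence must satisfy $t\rho_0 \le \tau(X) \le \pi/4$, a constraint that is genuinely delicate under the hypothesis $\mathrm{Area} \le \pi/3$ and is where the sharp constant $(27\pi/4)^{1/2}$ is actually decided.
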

Using the Sobolev type inequality of Lemma \ref{lem:sob-type} and an argument adapted from \cite{mese-iso}, we prove the isoperimetric inequality.
\begin{theorem}\label{thm:isoperimetric}
	Let $u: \left( \Sigma ,g  \right) \to X$ be a conformal harmonic map with $\mathrm{image}(u) \subset \B_{\tau(X)} (P)$. If $ \mathrm{Area}\left[ u \left(   \Sigma  \right)  \right] \le \frac{\pi}{3} $,
 then 
\begin{equation*}
	\frac{1}{2} E(u) = \mathrm{Area} \left[ u\left( \Sigma  \right) \right] \le \left( \frac{27 \pi}{4} \right) \mathrm{length}^2 \left[ u\left( \partial \Sigma \right) \right],
\end{equation*} 
\end{theorem}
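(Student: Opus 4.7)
The plan is to apply the Sobolev-type inequality of Lemma \ref{lem:sob-type} to a family of cutoff functions that vanish on $\partial \Sigma$ and approximate the constant $1$ in the interior, so that the right-hand side reduces in the limit to the boundary length. Let $\rho(x) := d_g(x, \partial \Sigma)$ and for small $\epsilon > 0$ set
\[
h_\epsilon(x) := \min\{1,\, \rho(x)/\epsilon\}.
\]
Each $h_\epsilon$ is $1$-Lipschitz with $|\nabla h_\epsilon| = \epsilon^{-1} \chi_{S_\epsilon}$ almost everywhere, where $S_\epsilon := \{0 < \rho < \epsilon\}$, and $h_\epsilon$ vanishes on $\partial \Sigma$. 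Although Lemma \ref{lem:sob-type} is stated for $C^1$ functions, it extends to such Lipschitz test functions by standard mollification, since both sides are continuous under $W^{1,1}$-convergence.

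On the left-hand side, dominated convergence gives $\int_\Sigma h_\epsilon^2 \lambda\, d\mu_g \to \int_\Sigma \lambda\, d\mu_g = \mathrm{Area}[u(\Sigma)]$. For the right-hand side, parametrize a collar neighborhood of $\partial \Sigma$ as $\partial\Sigma \times [0, \epsilon_0)$ with coordinates $(s,t)$ where $s$ is $g$-arclength along $\partial\Sigma$ and $t = \rho$; then $d\mu_g = J(s,t)\, ds\, dt$ with $J(s, 0) = 1$. We obtain
\[
\int_\Sigma |\nabla h_\epsilon|\, \lambda^{1/2}\, d\mu_g \;=\; \frac{1}{\epsilon} \int_{\partial\Sigma} \!\! \int_0^\epsilon \lambda^{1/2}(s,t)\, J(s,t)\, dt\, ds.
\]
Conformality of $u$ says that for any $g$-unit tangent vector $e_s$ to $\partial\Sigma$, $|u_*(e_s)|^2 = \lambda$; hence at $\mathcal{H}^1$-a.e.\ Lebesgue point $s \in \partial \Sigma$, $\lambda^{1/2}(s, 0)$ is the $g$-arclength speed of $u|_{\partial\Sigma}$ at $s$. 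By Lebesgue differentiation in $t$ and dominated convergence (using the Lipschitz bound on $u$ from Theorem 1.3 of \cite{Banff1}, so that $\lambda^{1/2} \in L^\infty$), the right-hand side converges to $\int_{\partial\Sigma} \lambda^{1/2}\, d\mathcal{H}^1 = \mathrm{length}[u(\partial\Sigma)]$. Passing to the limit in Lemma \ref{lem:sob-type} gives $\mathrm{Area}[u(\Sigma)]^{1/2} \le (27\pi/4)^{1/2}\,\mathrm{length}[u(\partial\Sigma)]$, and squaring yields the theorem.

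The main obstacle is the boundary-layer analysis, which must be carried out carefully because $u$ is only locally Lipschitz, so $\lambda^{1/2}$ lies in $L^\infty$ but need not be continuous. In particular, identifying $\lambda^{1/2}|_{\partial\Sigma}$ with the arclength speed of $u|_{\partial \Sigma}$ relies on the conformality identity $\lambda = |u_*(e)|^2$ holding at Lebesgue points, combined with a boundary Lebesgue differentiation argument for the average $\epsilon^{-1}\int_0^\epsilon \lambda^{1/2}(s,t) J(s,t)\, dt$. A secondary, routine point is the mollification needed to promote Lemma \ref{lem:sob-type} from $C^1$ test functions to the Lipschitz cutoffs $h_\epsilon$.
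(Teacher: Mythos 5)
Your overall strategy --- feed a cutoff concentrating at $\partial\Sigma$ into Lemma \ref{lem:sob-type} so that the gradient term becomes the boundary length --- is the same as the paper's, but your implementation of the boundary limit has a genuine gap. The step
\[
\frac{1}{\epsilon}\int_{\partial\Sigma}\int_0^\epsilon \lambda^{1/2}(s,t)\,J(s,t)\,dt\,ds \;\longrightarrow\; \int_{\partial\Sigma}\lambda^{1/2}(s,0)\,d\mathcal H^1
\]
is not justified by the tools you invoke. Lebesgue differentiation gives convergence of the averages $\epsilon^{-1}\int_{t_0}^{t_0+\epsilon}$ at \emph{a.e.} $t_0$, not at the single point $t_0=0$; for a mere $L^1$ density $\lambda$ there is no reason the averages over the collar converge to a boundary trace at all. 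Your proposed domination also fails: the Lipschitz bound of \cite[Theorem 1.3]{Banff1} is an \emph{interior} estimate (Lipschitz on $D_{r/2}$ for a map defined on $D_r$), so $\lambda^{1/2}$ need not be bounded, or even uniformly integrable at the right rate, in the boundary layer $S_\epsilon$; a priori one only gets $\epsilon^{-1}\int_{S_\epsilon}\lambda^{1/2}\lesssim \epsilon^{-1/2}(\int_{S_\epsilon}\lambda)^{1/2}$ from Cauchy--Schwarz, which can diverge. Finally, identifying $\lambda^{1/2}(s,0)$ with the arclength speed of $u|_{\partial\Sigma}$ is a trace statement, not a consequence of the interior conformality identity.

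The paper avoids exactly this boundary issue. It applies Lemma \ref{lem:sob-type} with cutoffs supported in the region $\Sigma_\epsilon$ bounded by \emph{interior} Lipschitz curves $\Gamma_\epsilon$, where the Korevaar--Schoen identity $\mathrm{length}[u(\Gamma_\epsilon)]=\int_{\Gamma_\epsilon}\lambda^{1/2}\,d\sigma$ (\cite[(1.9xvi)]{korevaar-schoen1}) is available, and then compares $\mathrm{length}[u(\Gamma_\epsilon)]$ to $\mathrm{length}[u(\partial\Sigma)]$ using only the uniform continuity of $u$ up to the closure --- a much weaker requirement than pointwise boundary control of $\lambda$. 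It also handles the passage from smooth test functions to the cutoff via Mese's logarithmic mollification $\lambda^\sigma:=e^{(\log\lambda)_\sigma}$, whose key feature $\lambda^\sigma\ge\lambda$ keeps all inequalities pointing the right way while $\sqrt{\lambda^\sigma}\to\sqrt{\lambda}$ in $W^{1,2}_{loc}$. To repair your argument you would either need to establish continuity of $\lambda$ up to $\partial\Sigma$ (which is not among the regularity results available here) or retreat to interior level sets as the paper does.
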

\begin{proof}
	Since $u$ is uniformly continuous, for any $\epsilon>0$, we can pick a family of Lipschitz closed curves $\Gamma_\epsilon$ that approximate $\partial \Sigma$ i.e. with
	\begin{equation*}
		\left|   \mathrm{length} \left[   u(\Gamma_\epsilon)  \right] -  \mathrm{length} \left[ u\left( \partial \Sigma  \right) \right] \right| < \left(\frac 4{27\pi}\right)^{\frac 12} \epsilon. 
	\end{equation*}
	and such that
	\begin{equation*}
		\mathrm{Area}^{\frac 12}\left[ u\left( \Sigma \right)  \right] < \epsilon + \mathrm{Area}^{\frac 12}\left[ u\left( \Sigma_\epsilon  \right)  \right] 
	\end{equation*}
	where $\Sigma_\epsilon$ is the connected component of $\Sigma \backslash \Gamma_\epsilon$ which is disjoint from $\partial \Sigma$. By \cite[(1.9xvi)]{korevaar-schoen1}, for any Lipschitz closed curve $\Gamma \subset \Sigma$,
	\[
	\mathrm{length}[u(\Gamma)] = \int_\Gamma \lambda^{\frac 12} d\sigma_\Gamma.
	\]
Following the proof of \cite[Theorem 6.1]{meseCAG}, let $\lambda^\sigma:=e^{(\log \lambda)_\sigma}$, where $(\log \lambda)_\sigma$ is a symmetric mollification of $\log \lambda$. Then, $\lambda^\sigma \ge \lambda$ and $\sqrt{\lambda^\sigma} \to \sqrt{\lambda}$ in $W^{1,2}_{loc}(\Sigma)$. Then for any $h \in C^\infty_0(\Sigma)$ with $\|h\|_{L^\infty}<1$ and $\sigma>0$ sufficiently small,
\[
\int_\Sigma |\nabla h| \lambda^{\frac 12} d\mu_g \leq \int_\Sigma |\nabla h| (\lambda^\sigma)^{\frac 12}d\mu_g, 
\]
\[
\int_\Sigma h^2 \lambda^\sigma d\mu_g \leq \int_\Sigma h^2 \lambda d\mu_g +  \int_\Sigma(\lambda^\sigma - \lambda) d\mu_g.
\]By Lemma \ref{lem:sob-type}, 
\[
\left(\int_\Sigma h^2 \lambda^\sigma d\mu_g \right)^{\frac 12} \leq \left(\frac{27\pi}4\right)^{\frac 12} \int_\Sigma |\nabla h|(\lambda^\sigma)^{\frac 12}d\mu_g + O(\sigma).
\]
Using smooth approximations of the cutoff function on $\Sigma_\epsilon$, we observe that
\[
\left(\int_{\Sigma_\epsilon} \lambda^\sigma d\mu_g \right)^{\frac 12} \leq \left(\frac{27\pi}4\right)^{\frac 12} \int_{\Gamma_\epsilon}(\lambda^\sigma)^{\frac 12}d\sigma_{\Gamma_\epsilon} + O(\sigma), 
\]and letting $\sigma \to 0$ we see that
\[
\left(\int_{\Sigma_\epsilon} \lambda d\mu_g \right)^{\frac 12} \leq \left(\frac{27\pi}4\right)^{\frac 12} \int_{\Gamma_\epsilon}\lambda^{\frac 12}d\sigma_{\Gamma_\epsilon}.
\]By the choice of $\Gamma_\epsilon$, 
\[
	\mathrm{Area}^{\frac 12}\left[ u\left( \Sigma \right)  \right]  \leq \left( \frac{27 \pi}{4} \right)^{\frac{1}{2}} \int_{\partial \Sigma} \lambda^{\frac{1}{2}} d\sigma_{\partial \Sigma} + 2 \epsilon,
\]which implies the result.

\end{proof}

\section{Proof of the Main Theorem}\label{BTC}
This section consists of three subsections. In Section \ref{ConvSec}, we prove convergence results that produce the limit map and are applied iteratively to produce the bubble maps. Section \ref{MapsSec} contains a description of the bubble tree and the bubble maps. Finally, in Section \ref{NecSec} we prove the no-neck property and energy quantization result, which finishes the proof of Theorem \ref{MAIN}.

\subsection{Convergence Results}\label{ConvSec}
\begin{lemma}\label{BT1}
Let $u_k:(M,g) \to (X,d)$ be a sequence of harmonic maps such that $E\left[ u_k,M \right]<\Lambda<\infty$ and let $\epsilon_{\mathrm{gap}}$ be as in Proposition \ref{GapThm}. Then there exists a subsequence $\{u_k\}$ and a set of points $\{x_1, \dots, x_\ell\}$ with corresponding masses $\{m_1, \dots, m_\ell\}$ where $\ell \leq \Lambda/\epsilon_{\mathrm{gap}}$, and a harmonic map $u:M \to X$ such that
\begin{enumerate}
\item \label{bct1}$u_k \to u$ in $C^0$ uniformly on compact sets in $M \backslash \{x_1, \dots, x_\ell\}$.
\item \label{bct2}For any open subset $\Omega$ with $\overline{\Omega} \subset M \backslash \{x_1, \dots, x_\ell\}$, 
\[
\lim_{k \to \infty}E\left[ u_k, \Omega \right]=E\left[ u,\Omega \right].
\]
\item \label{bct3}  For all $r>0$ and all $i \in \{1, \dots, \ell\}$, 
\[
\lim_{r \to 0} \; \lim_{k \to \infty}E\left[ u_k, D_r(x_i) \right]:=m_i \geq \epsilon_{\mathrm{gap}}.
\]
\item \label{bct4} The energies satisfy the relation
\[
\lim_{k \to \infty} E[u_k] = E[u] + \sum_{i=1}^\ell m_i.
\]
\end{enumerate}
\end{lemma}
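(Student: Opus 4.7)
The plan is to follow the classical Sacks--Uhlenbeck concentration-compactness scheme, with Proposition~\ref{BTtools} replacing the use of elliptic PDE tools in the smooth setting. Write $\mu_k := |\nabla u_k|^2 d\mu_g$ for the energy density measures on $M$; these have total mass at most $\Lambda$, so by weak-$*$ compactness and passage to a subsequence I may assume $\mu_k \rightharpoonup \mu$ for some Radon measure $\mu$ with $\mu(M) \le \Lambda$. Set $\epsilon^* := \min(\epsilon', \epsilon_{\mathrm{gap}})$, where $\epsilon'$ is the constant from Proposition~\ref{BTtools}, and define the concentration set
\[
S := \{ x \in M : \mu(\overline{D_r(x)}) \ge \epsilon^* \text{ for every } r > 0 \}.
\]
Each $x \in S$ is an atom of $\mu$ of mass at least $\epsilon^*$ (by downward continuity of $\mu$), so $S = \{x_1,\dots,x_\ell\}$ is finite. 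For any $x \in M \setminus S$ there exists $r>0$ with $\mu(\overline{D_r(x)}) < \epsilon^*$, and weak-$*$ convergence on closed sets yields $E[u_k,D_r(x)]<\epsilon^*$ eventually, so Proposition~\ref{BTtools}(3) extracts a subsequence converging in $C^0$ on $D_{r/2}(x)$ to a harmonic map. A diagonal extraction along a countable exhaustion of $M \setminus S$ by compact sets produces a harmonic limit $u:M \setminus S \to X$ with $C^0$ uniform convergence on compact subsets, and Proposition~\ref{BTtools}(4) extends $u$ to a locally Lipschitz harmonic map on $M$, establishing (1). Item (2) follows by the same local covering: on each small-energy disk, \cite[Theorem 2.3]{Banff2} promotes $C^0$ convergence of the locally energy-minimizing $u_k$ to local energy convergence.

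For (3), at each $x_i$ I perform a Sacks--Uhlenbeck-style bubble extraction. Choose scales $\lambda_{i,k} \to 0$ and basepoints $x_{i,k} \to x_i$ via a maximal-energy selection, so that $E[u_k,D_{\lambda_{i,k}}(x_{i,k})]$ equals a prescribed fraction of $\epsilon^*$ and the energy is genuinely captured inside a fixed Euclidean disk of the rescaled domain. The rescaled harmonic maps
\[
v_k(y) := u_k\bigl(\exp_{x_{i,k}}(\lambda_{i,k} y)\bigr)
\]
are then harmonic on domains exhausting $\mathbb{C}$ with uniformly small energy on each fixed disk, so Proposition~\ref{BTtools}(3) produces a subsequential $C^0$ limit $\omega:\mathbb{C} \to X$ that is harmonic and nonconstant. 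By Proposition~\ref{BTtools}(4), $\omega$ extends to a finite-energy harmonic map $\omega:\mathbb{S}^2 \to X$. Since every holomorphic quadratic differential on $\mathbb{S}^2$ vanishes identically, $\Phi_\omega \equiv 0$ and $\omega$ is conformal, so Proposition~\ref{BTtools}(2) yields $E[\omega,\mathbb{S}^2] \ge \epsilon_{\mathrm{gap}}$. Combining the conformal invariance of the two-dimensional energy with the identity $E[v_k,D_R(0)] = E[u_k,D_{R\lambda_{i,k}}(x_{i,k})] \le \mu_k(\overline{D_r(x_i)})$ for any fixed $r>0$ and $k$ large, and taking $k\to\infty$ then $r\to 0$, lower semicontinuity of energy under $C^0$ limits gives $E[\omega,\mathbb{S}^2] \le m_i$. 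Thus $m_i \ge \epsilon_{\mathrm{gap}}$, and summing produces $\ell \cdot \epsilon_{\mathrm{gap}} \le \sum_i m_i \le \Lambda$, which is the advertised bound on $\ell$.

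For item (4), fix $r>0$ small with the $D_r(x_i)$ disjoint and set $\Omega_r := M \setminus \bigcup_i \overline{D_r(x_i)}$. Item (2) gives $\lim_k E[u_k,\Omega_r] = E[u,\Omega_r]$, and the definition of $m_i$ combined with $E[u,D_r(x_i)] \to 0$ (local Lipschitz continuity of the extended $u$) yields the quantization identity after sending $r \to 0$. The principal technical obstacle is the scale selection in (3): with only $C^0$ convergence available, one cannot mimic the smooth argument of renormalizing by $|\nabla u_k|^{-1}$ at a maximum, so the scales $\lambda_{i,k}$ and centers $x_{i,k}$ must be defined intrinsically in terms of the energy measure, and one must verify both that $\omega$ is nonconstant and that no energy escapes to infinity on the rescaled domain before invoking the $\mathbb{S}^2$ removable singularity result. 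The subsidiary non-obvious ingredient is the automatic conformality of any harmonic map from $\mathbb{S}^2$ into a CAT(1) target via vanishing of holomorphic quadratic differentials on $\mathbb{S}^2$, which is exactly what legitimizes the application of the energy gap theorem to the bubble $\omega$.
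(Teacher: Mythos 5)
Your proposal is correct and follows essentially the same route as the paper, which itself dispatches items (1) and (3) as ``standard arguments using Proposition \ref{BTtools}'', proves (2) by appealing to compactness of energy minimizers from the Banff papers, and derives (4) from (2); your concentration-set construction, the use of the removable singularity theorem to extend $u$ across $\{x_1,\dots,x_\ell\}$, and the Sacks--Uhlenbeck rescaling plus the energy gap to get $m_i \ge \epsilon_{\mathrm{gap}}$ are precisely those standard arguments. Your remark that harmonic maps from $\mathbb S^2$ are automatically conformal (since $\Phi_\omega$ is a holomorphic quadratic differential on $\mathbb S^2$, hence zero) is a worthwhile detail that the paper leaves implicit when invoking the gap theorem.
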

\begin{proof}
Items \eqref{bct1} and \eqref{bct3} follow by standard arguments using Proposition \ref{BTtools}.

 For \eqref{bct2}, choose $r'>0$ such that for all $x \in M$, $u(D_{r'}(x)) \subset \B_{\tau(X)/2}(u(x))$. Let $d_\Omega:= d_g(\partial \Omega, \{x_1,\dots, x_\ell\})$. There exists $K_{\Omega} \in \mathbb N$ such that for all $k \geq K_{\Omega}$ and, $x \in \Omega$, $0<r<d_\Omega/2$, $u_k(D_r(x)) \subset \B_{3\tau(X)/4}(u(x))$. Suppose to the contrary that the energy drops in the limit. Then there exists $y \in \Omega$ and $0<t<d_\Omega/2$ such that $\liminf_{k\to \infty}E\left[ u_k, D_t(x) \right]> E\left[ u,D_t(x) \right]$. But this contradicts the proof of compactness of minimizers in \cite[Lemma 2.3]{Banff1} and thus the claim holds.
 
 Finally,  \eqref{bct4} follows immediately from \eqref{bct2} and standard arguments.
 \end{proof}

Fix a constant 
\begin{equation}\label{eq:CR}
0<C_R \leq \min\left\{\frac \pi 3, \frac{\epsilon_{\mathrm{gap}}}2, C_{\mathrm{mon}}\frac{ \tau^2(X)}{16}\right\}.
\end{equation}
 Here $C_{\mathrm{mon}}$ is the monotonicity constant given in \cite[Theorem 3.4]{Banff2}. 
 
To understand the importance of the constants chosen in the following lemma, we provide a brief outline of their significance going forward. Let $r= \min\{d_g(x_i, x_j),i \neq j\}$, for $x_i$ from Lemma \ref{BT1}. In each ball $B_r(x_i)$, there are three regions of interest. In fact these regions will be on the pull back of this domain to $T_{x_i}M$ by the exponential map. We refer to this domain as $B_r(0) \subset T_xM$. 

In the next lemma, we choose these regions by choosing $\epsilon_{k,i}, \lambda_{k,i}, c_{k,i}$ where
\[
B_{k \lambda_{k,i}}(c_{k,i}) \subset B_{\epsilon_{k,i}}(c_{k,i}) \subset B_r(0).
\]By construction $\epsilon_{k,i}/(k\lambda_{k,i}) \to \infty$ so the annular scale is not uniform. The scales and how we choose them will help us to complete our main theorem. On the outer region $B_r(0) \backslash  B_{\epsilon_{k,i}}(c_{k,i})$, $u_k \to u$ uniformly in $C^0$, where $u$ is a harmonic map. On the inner region $B_{k \lambda_{k,i}}(c_{k,i})$, after an appropriate conformal transformation of the domain, the new maps $\overline u_{k,i}$ will converge to a ``bubble map" - a harmonic map from $\mathbb S^2$. The intermediate region $B_{\epsilon_{k,i}}(c_{k,i})\backslash B_{k \lambda_{k,i}}(c_{k,i})$ is called the ``neck region". The behavior of the sequence $u_k$ on this region is not captured by $u$ or by any of the bubble maps. Thus, the main objective is to determine whether any of the limiting information about $u_k$ escapes in the neck regions. Our main theorem demonstrates that no energy or image is lost in these necks.

\begin{lemma}\label{BT2}Consider a single bubble point $x_i$ with mass $m_i$. For simplicity we denote them in what follows as $x, m$. 
There exists a further subsequence, and constants $\epsilon_k \searrow 0$ and $C>0$ such that, identifying $u_k$ with $\exp_x^*u_k$ and $D_k:= D_{2\epsilon_k}(0) \subset T_xM$,   
\begin{enumerate}
\item \label{BT21}$E\left[ u_k, D_k \backslash D_{\epsilon_k/8k^2}(0) \right] \to 0$.
\item $u_k(\partial D_k) \subset \B_{C/k}(u(x))$. 
\item for $c_k:=(c_k^1,c_k^2)$ where 
\[
c_k^j= \frac {\int_{D_k} x^j |\nabla u_k|^2dx}{\int_{D_k}|\nabla u_k|^2 dx}, \quad \quad |c_k| \leq \epsilon_k/2k^2.
\]
\item \label{BT24} for
\[
\lambda_k := \min\{\lambda: \int_{D_{\epsilon_k}(0)\backslash D_{\lambda}(c_k)} |\nabla u_k|^2 dx \leq C_R\},\quad \quad \lambda_k \leq \epsilon_k/k^2.
\]

\end{enumerate}
\end{lemma}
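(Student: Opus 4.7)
The plan is to choose $\epsilon_k\to 0$ by a pigeonhole argument over dyadic scales of ratio $16k^2$, and then refine the subsequence using the $C^0$ convergence from Lemma \ref{BT1} together with the continuity of $u$ at $x$ afforded by Theorem \ref{RSThm}. The main obstacle will be item (2), where the pigeonhole scale $\epsilon_k$ must be matched with the non-uniform rate of $C^0$ convergence on compact subsets of $M\setminus\{x\}$.

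First I would work in exponential coordinates at $x$ and fix $\rho_0>0$ small enough that $D_{\rho_0}(x)$ contains no other bubble point. For each $k$ consider the scales $r_j^k := \rho_0(16 k^2)^{-j}$ for $j=1,\dots,N_k$ with $N_k := k^3$. The disjoint annuli $D_{r_j^k}\setminus D_{r_{j+1}^k}$ lie in $D_{\rho_0}(x)$ and have total energy at most $\Lambda$, so pigeonhole produces some $j_k$ with annular energy at most $\eta_k := \Lambda/k^3$. Setting $2\epsilon_k := r_{j_k}^k$ gives $\epsilon_k/(8k^2) = r_{j_k+1}^k$, $\epsilon_k \leq \rho_0/(32 k^2)\to 0$, and item (1) with the explicit rate $\eta_k \to 0$. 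A standard mass-capture argument using Lemma \ref{BT1} (comparing $E[u_k,D_{2\epsilon_k}]$ with $E[u_k,D_r]$ for a fixed small $r$, via $C^0$ convergence on $D_r\setminus D_{2\epsilon_k}$) also yields $E[u_k,D_k]\to m$.

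For (3), split $D_k$ into the inner disk $D_{\epsilon_k/(8k^2)}(0)$, where $|x^j|\leq \epsilon_k/(8k^2)$, and the outer annulus, where $|x^j|\leq 2\epsilon_k$ and the energy is at most $\eta_k$. This gives
\[
\Bigl|\int_{D_k} x^j|\nabla u_k|^2\,dx\Bigr| \leq \frac{\epsilon_k}{8k^2}\,E[u_k,D_{\epsilon_k/(8k^2)}] + 2\epsilon_k\eta_k,
\]
and dividing by $E[u_k,D_k]\to m$ together with $\eta_k = O(1/k^3)$ yields $|c_k|\leq \epsilon_k/(2k^2)$ for $k$ large. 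For (4), the bound on $|c_k|$ implies $D_{\epsilon_k/k^2}(c_k)\supset D_{\epsilon_k/(8k^2)}(0)$, so $D_{\epsilon_k}(0)\setminus D_{\epsilon_k/k^2}(c_k) \subset D_k \setminus D_{\epsilon_k/(8k^2)}(0)$, and the latter has energy at most $\eta_k\leq C_R$ for $k$ large; the minimality in the definition of $\lambda_k$ forces $\lambda_k\leq \epsilon_k/k^2$.

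Item (2) is the most delicate. By Theorem \ref{RSThm} the limit $u$ extends to a locally Lipschitz harmonic map at $x$, so $\sup_{\partial D_{2\epsilon_k}(x)} d(u,u(x))\leq 2L\epsilon_k = O(1/k^2)$. The $C^0$ convergence $u_k\to u$ is uniform on each fixed compact subset of $M\setminus\{x\}$ but has no uniform rate as the shrinking circle $\partial D_{2\epsilon_k}(x)$ approaches $x$. The plan is to use a diagonal argument to pass to a further subsequence (re-applying pigeonhole as needed so that items (1), (3), (4) are preserved) so that $\sup_{\partial D_{2\epsilon_k}(x)} d(u_k,u)\leq 1/k$. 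The triangle inequality then yields $u_k(\partial D_k)\subset \B_{C/k}(u(x))$ for an appropriate constant $C$.
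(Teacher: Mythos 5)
Your overall architecture (scale selection by pigeonhole, small annular energy, a diagonal subsequence for the boundary estimate, then items (3) and (4) as consequences) is reasonable, and your treatment of item (2) matches the paper's: Lipschitz regularity of $u$ from the removable singularity theorem plus $C^0$ closeness on the (for each $k$, finitely many, fixed) candidate circles. The genuine gap is the mass-capture claim $E[u_k,D_k]\to m$, on which your proof of item (3) depends. Because you select $\epsilon_k$ by pigeonholing over the energy distribution of $u_k$ itself, the chosen scale is only known to lie somewhere in $[\rho_0(16k^2)^{-k^3},\,\rho_0(16k^2)^{-1}]$; nothing prevents it from falling below the scale at which the mass $m$ concentrates, in which case $E[u_k,D_k]$ can be small, the denominator in $c_k^j$ degenerates, and your estimate fails (it requires $2\epsilon_k\eta_k/E[u_k,D_k]\lesssim \epsilon_k/k^2$, i.e. $E[u_k,D_k]\gtrsim \Lambda/k$). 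The justification you offer --- comparing with $E[u_k,D_r]$ for fixed $r$ ``via $C^0$ convergence on $D_r\setminus D_{2\epsilon_k}$'' --- does not work: $C^0$ convergence gives no control on energy, and $D_r\setminus D_{2\epsilon_k}$ is not a fixed compact subset of $M\setminus\{x\}$, so neither item \eqref{bct1} nor item \eqref{bct2} of Lemma \ref{BT1} applies to it.

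The paper avoids this by decoupling the choice of scales from the choice of subsequence: it takes $\epsilon_k$ to be the largest radius with $E[u,D_{2\epsilon_k}]\le m/16k^2$ (a condition on the limit map only), and then, for each fixed $k$, applies Lemma \ref{BT1} \eqref{bct2}--\eqref{bct3} on the now-fixed annulus $A_k=D_k\setminus D_{\epsilon_k/8k^2}(0)$ and fixed inner disk to choose $n_k$ with $E[u_{n_k},A_k]<m/8k^2$ and $E[u_{n_k},D_{\epsilon_k/8k^2}(0)]\ge \frac{8k^2-1}{8k^2}m$. Your scheme is repairable along the same lines, since for each $k$ your candidate radii $r^k_j$, $j\le k^3+1$, form a finite fixed set bounded away from $x$: add to the diagonal choice of $n_k$ the requirement $E[u_{n_k},D_{r^k_{k^3+1}}]\ge m-1/k$, which is attainable by Lemma \ref{BT1} at that fixed radius and guarantees the inner disk captures the mass. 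As written, though, that step is missing, and it is exactly the step item (3) needs.
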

Note that the proof of this lemma will not require $C^1$ convergence of $u_k$ to $u$, but instead uses the weaker convergence given by items \eqref{bct2}, \eqref{bct3} in Lemma \ref{BT1}.
\begin{proof}Let $\rho_0:=\frac 12 \min\{\mathrm{dist}(x_j, x): j \in \{1, \dots, \ell\}, x_j \neq x\}$. Choose $\epsilon_k \leq \min\{\frac 1k, \rho_0, \mathrm{inj}(M)\}$ to be the largest number such that
\[
\int_{D_k}|\nabla u|^2 \leq \frac m{16k^2}.
\]
We determine the subsequence inductively. For each $k \geq 1$, let $A_k:= D_k \backslash D_{\epsilon_k/8k^2}(0)$. With $\Omega:= \overline A_k$ fixed, items \eqref{bct2}, \eqref{bct3} of Lemma \ref{BT1} imply that there exists $N_k$ such that for all $n \geq N_k$, 
\begin{equation}\label{eq:annular}
\int_{A_k}|\nabla u_n|^2 \leq 2\int_{A_k}|\nabla u|^2< \frac m{8k^2}
\end{equation}and
\begin{equation}\label{eq:disk}
 \frac{8k^2-1}{8k^2}m\leq \int_{D_{\epsilon_k/8k^2}(0)}|\nabla u_n|^2 \leq \frac{8k^2+1}{8k^2}m.
\end{equation}Moreover, we may increase $N_k$ if necessary so that for all $n \geq N_k$, 
\begin{equation}\label{eq:boundarydist}
\sup_{y \in \partial D_k} d(u_n(y), u(y)) \leq \frac 1k.
\end{equation}Set $n_k = \max\{N_k , 1+n_{k-1}\}$. Then the first item follows from \eqref{eq:annular}. The existence of $C$ such that the second item holds follows from the Lipschitz regularity of $u$ combined with \eqref{eq:boundarydist}. The estimates on $c_k, \lambda_k$ follow from \eqref{eq:annular}, \eqref{eq:disk} and their definitions (cf. \cite[Section 6]{Parker}).
\end{proof}

We will need a conformal transformation of $D_k$ onto $S_k \subset \mathbb S^2$ such that $c_k \mapsto (0,0,1)$ and $\partial D_{\lambda_k}(c_k)$ maps to the equator. Let $\pi_{S^2}:  \mathbb S^2 \left(\subset \R^3\right) \to \R^2 \cong T_x M$ be a fixed stereographic projection that maps the equator to the unit circle and the north pole to the origin. Let $\Psi_k: \R^2 \to \R^2$ be given by
\[
	\Psi_k (x) := \lambda_k x + c_k.
\]
Then, the map $\Theta_k := \left( \Psi_k \circ \pi_{S^2} \right)^{-1} =  \pi_{S^2}^{-1} \circ \Psi_k^{-1}$ is a conformal transformation under which $c_k \mapsto (0,0,1)$ and $\partial D_{ \lambda_k}(c_k)$ maps to the equator. Define $S_k := \Theta_k \left( D_{2\epsilon_k} (0) \right)$. Now let
\begin{equation}\label{eq:bar-u}
\overline u_k:S_k \to X
\end{equation}
be defined as $\overline u_k \circ \Theta_k (x) = u_k (x)$ for all $x \in D_k$. Applying Proposition \ref{BTtools} to the maps $\overline u_k$ we obtain a result analogous to Lemma \ref{BT1} for maps from domains exhausting $\mathbb S^2$. For ease of notation, let $D_r^{\mathbb S^2}(y)$ denote a geodesic disk in $\mathbb S^2$ of radius $r$ and centered at $y \in \mathbb S^2$.

\begin{lemma}\label{BT3}Let $S^-_k$ represent the portion of $S_k$ in the southern hemisphere, $p^-$ denote the south pole, and $\epsilon_{\mathrm{gap}}$ be as in Proposition \ref{GapThm}. 
There exists a further subsequence $\{\overline u_k\}$ of harmonic maps with $\overline u_k:S_k \to X$, a harmonic map $\overline u:\mathbb S^2 \to X$, a collection of points $\{y_1, \dots, y_l\}\subset \mathbb S^2$ with corresponding masses $\{m_1, \dots, m_l\}$ such that
\begin{enumerate}
\item \label{BT31} $\overline u_k \to \overline u$ in $C^0$ uniformly on compact subsets of $\mathbb S^2 \backslash \{y_1, \dots, y_l, p^-\}$.
\item \label{BT33}  $\lim_{k \to \infty}E\left[ \overline u_k,S_k \right] = m$. 
\item \label{BT32} $\lim_{k \to \infty}E\left[ \overline u_k, S_k^- \right]=C_R$.
\item \label{BT34} for any open set $\Omega$ with $\overline{\Omega} \subset \mathbb S^2 \backslash \{y_1, \dots, y_l, p^-\}$, 
\[\lim_{k \to \infty}E\left[ \overline u_k, \Omega \right] = E\left[ \overline u, \Omega \right].
\]
\item \label{BT35} for all $r>0$ and $j \in \{1, \dots, l\}$,
\[
 \lim_{r \to 0} \; \lim_{k \to \infty} E\left[ \overline u_k, D_r^{\mathbb S^2}(y_j) \right] :=m_j \geq \epsilon_{\mathrm{gap}}.
 \]
 \item \label{BT35b} there exists $\tau(p^-) \geq 0$ such that
 \[
 \lim_{k \to \infty} E[\overline u_k,S_k] = E[\overline u,\mathbb S^2] + \tau(p^-)+ \sum_{i=1}^l m_i
 \]
 \item \label{BT35b2} the map $|\nabla \overline u|^2 + \tau(p^-)\delta_{p^-} + \sum_{i=1}^l m_i\delta_{y_i}$ has center of mass on the $z$-axis.
 \item \label{BT35c} if $E[\overline u,\mathbb S^2]<\epsilon_{\mathrm{gap}}$ then $E[\overline u,\mathbb S^2]=0$. In this case, $l>0$ and if $l=1$ then $\tau(p^-)=C_R$.
\item \label{BT36} $\overline u_k(\partial \Theta_k(D_{k\lambda_k}(c_k))) \subset \B_{C/k}(\overline u(p^-))$.
\item \label{BT37} 
$ E \left[  \overline{u}_k , \Theta_k \left( D_{2k\lambda_k}(c_k)  \backslash D_{k\lambda_k}(c_k) \right) \right]  \to 0$.
\end{enumerate}
\end{lemma}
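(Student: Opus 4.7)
The strategy is to apply the compactness machinery of Proposition~\ref{BTtools} to the rescaled maps $\overline u_k : S_k \to X$, in direct analogy with the proof of Lemma~\ref{BT1}; the new feature is that $S_k$ exhausts $\mathbb S^2 \setminus \{p^-\}$, so the south pole plays the role of an additional accumulation point. Conformal invariance of energy gives $E[\overline u_k, S_k] = E[u_k, D_{2\epsilon_k}(0)] \le \Lambda$, so the Sup Estimate and Uniform Convergence statements of Proposition~\ref{BTtools}(1), (3) yield a subsequence converging in $C^0$ uniformly on compact subsets of $\mathbb S^2 \setminus \{p^-, y_1, \ldots, y_l\}$ to a harmonic map, and Proposition~\ref{BTtools}(4) extends this map across each missing point to produce the harmonic $\overline u : \mathbb S^2 \to X$ of \eqref{BT31}. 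Items \eqref{BT34} and \eqref{BT35} now have proofs identical to those of their analogues in Lemma~\ref{BT1}.

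The energy identities \eqref{BT33} and \eqref{BT32} follow from conformal invariance combined with, respectively, the definition of $m$ (using $\epsilon_k \to 0$) and the combination of Lemma~\ref{BT2}\eqref{BT24} with the annular-decay bound of Lemma~\ref{BT2}\eqref{BT21}. For \eqref{BT35b}, setting
\[
\tau(p^-) := \lim_{r \to 0}\,\lim_{k \to \infty}\, E\bigl[\overline u_k, D_r^{\mathbb S^2}(p^-) \cap S_k\bigr]
\]
and assembling \eqref{BT33}, \eqref{BT34}, \eqref{BT35} produces the stated decomposition. For \eqref{BT35b2}, the planar centering $\int_{D_k}(y - c_k)|\nabla u_k|^2\, dy = 0$ from Lemma~\ref{BT2}(3) is pushed forward under $\Theta_k$, and the weak limit of the resulting pushforward measures on $\mathbb S^2 \subset \mathbb R^3$ (which has atomic parts at $p^-$ and the $y_j$, plus the absolutely continuous part $|\nabla \overline u|^2\, d\mu_{\mathbb S^2}$) has vanishing $x$- and $y$-components of center of mass, by the symmetry built into $\Theta_k$. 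Item \eqref{BT35c} applies the Energy Gap (Proposition~\ref{BTtools}(2)) to $\overline u$ to conclude $\overline u \equiv \mathrm{const}$; combining with $\tau(p^-) \le C_R < \epsilon_{\mathrm{gap}} \le m$ coming from \eqref{eq:CR} rules out $l = 0$, and in the case $l = 1$ the identities \eqref{BT33}, \eqref{BT32}, \eqref{BT35b} pin $\tau(p^-) = C_R$.

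The most delicate items are \eqref{BT36} and \eqref{BT37}, both of which concern the annular neck region. For \eqref{BT36}, one first observes $\overline u(p^-) = u(x)$ by combining $C^0$ uniform convergence on compact subsets approaching $p^-$ with Lemma~\ref{BT2}(2); then the outer-boundary bound $u_k(\partial D_k) \subset \mathcal B_{C/k}(u(x))$ is propagated inward through the small-energy neck $D_{\epsilon_k}(0) \setminus D_{k\lambda_k}(c_k)$ via the Sup Estimate on dyadic sub-annuli combined with the small-area isoperimetric inequality Theorem~\ref{thm:isoperimetric} to bound the image diameter of each concentric circle. The main obstacle is \eqref{BT37}: a pigeonhole argument on the $\ge 2\log_2 k$ dyadic annuli tiling the neck (total energy $\le C_R$) produces, for each $k$, some scale $s_k$ on which the annular energy is $O(C_R/\log k) \to 0$; a further subsequence extraction, together with a slight relabeling of the scales in Lemma~\ref{BT2}, arranges $s_k = k\lambda_k$, and conformal invariance then converts the annular decay into \eqref{BT37}. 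The bookkeeping needed to make all subsequence and rescaling choices simultaneously compatible across items \eqref{BT31}--\eqref{BT37} is standard but technically involved.
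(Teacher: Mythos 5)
Your handling of items \eqref{BT31}--\eqref{BT35c} follows the paper's route (run the Lemma~\ref{BT1} machinery on the rescaled maps, read \eqref{BT32} off the definition of $\lambda_k$ plus the annular decay of Lemma~\ref{BT2}\eqref{BT21}, and use the gap theorem plus the center-of-mass item for \eqref{BT35c}; note that for $l=1$ you do need \eqref{BT35b2} to force $y_1=p^+$, since \eqref{BT33}, \eqref{BT32}, \eqref{BT35b} alone do not exclude $y_1$ sitting on the equator). The genuine problems are in your last paragraph. For \eqref{BT36} you begin with ``$\overline u(p^-)=u(x)$'' and then propagate the outer-boundary bound inward through the neck. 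Neither step is available here: the identity $\overline u(p^-)=u(x)$ is precisely the ``images meet'' half of the no-neck property, proved only in Section~\ref{NecSec}; and the neck $D_{\epsilon_k}(0)\setminus D_{k\lambda_k}(c_k)$ carries energy close to the fixed constant $C_R$, not $o(1)$, so the Sup Estimate on dyadic sub-annuli only bounds each circle's image length by a constant and the diameters can a priori accumulate to $\sim\log(\epsilon_k/(k\lambda_k))$; the isoperimetric fix is exactly the final section's argument, which relies on Lemma~\ref{lem:length} and hence on item \eqref{BT37} of the very lemma you are proving. None of this is needed: the ball in \eqref{BT36} is centered at $\overline u(p^-)$, not at $u(x)$. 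Since $\Theta_k(\partial D_{k\lambda_k}(c_k))=\pi_{S^2}^{-1}(\partial D_k(0))$ is a fixed circle at spherical distance $\sim 2/k$ from $p^-$, for each $k$ it is a compact subset of $\mathbb S^2\setminus\{y_1,\dots,y_l,p^-\}$ on which $\overline u_n\to\overline u$ uniformly; a diagonal extraction gives $\sup d(\overline u_k,\overline u)\le 1/k$ there, and the removable singularity theorem makes $\overline u$ Lipschitz across $p^-$, so $d(\overline u(y),\overline u(p^-))\le C/k$ on that circle. This is what the paper means by ``follows as in Lemma~\ref{BT2}.''

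For \eqref{BT37} your pigeonhole over the $\gtrsim\log_2 k$ dyadic annuli of the neck produces a low-energy annulus at some \emph{uncontrolled} scale $2^j\lambda_k$, which could be a bounded multiple of $\lambda_k$ or sit up near $\epsilon_k$. You cannot ``relabel'' it to $[k\lambda_k,2k\lambda_k]$: $\lambda_k$ is pinned by its definition in Lemma~\ref{BT2}\eqref{BT24} (and $\Theta_k$, $S_k^-$, and item \eqref{BT32} all depend on it), and the factor multiplying $\lambda_k$ must tend to infinity for the inner domains to exhaust $\mathbb S^2$, which a pigeonhole scale need not satisfy. The paper's argument runs in the opposite direction: $\Theta_k(D_{2k\lambda_k}(c_k)\setminus D_{k\lambda_k}(c_k))$ is a fixed spherical annulus shrinking to $p^-$, so $E[\overline u,\cdot]$ of it tends to $0$ because $\overline u$ extends across $p^-$ with locally finite energy; item \eqref{BT34} applied to each such annulus plus a diagonal extraction transfers this to $\overline u_k$. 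It is this established \eqref{BT37} that later feeds the mean-value argument of Lemma~\ref{lem:length} to locate the good circle $s_k\in[k\lambda_k,2k\lambda_k]$, not the other way around.
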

\begin{remark}
Following the usual convention, if $E[\overline u,\mathbb S^2]=0$ then we say that $\overline u$ is a \emph{ghost bubble}.
\end{remark}
\begin{proof}Item \eqref{BT31} follows from arguments as in Lemma \ref{BT1} and item \eqref{BT33} follows from the choice of $D_k$ earlier. Observe also that
\begin{align*}
E\left[ \overline u_k, S_k^- \right] = E\left[ u_k, D_k \backslash D_{\lambda_k}(c_k) \right] 
=E\left[ u_k, D_k \backslash D_{\epsilon_k}(0) \right] + E\left[ u_k,  D_{\epsilon_k}(0)\backslash D_{\lambda_k}(c_k) \right].
\end{align*} Item \eqref{BT32} now follows from Lemma \ref{BT1} \eqref{bct2} and Lemma \ref{BT2} \eqref{BT21}, \eqref{BT24}. Items \eqref{BT34} -- \eqref{BT35b} follow the logic as in Lemma \ref{BT2}, though we must include $\tau(p^-)$ in \eqref{BT35b} since energy may concentrate at $p^-$. 
Item \eqref{BT35b2} holds since for $f \in C^\infty(\mathbb S^2, \mathbb R)$, by approximating by characteristic functions and appealing to the logic that gives \eqref{BT35b}, we conclude that
\[
\lim_{k \to \infty}\int_{\mathbb S^2 \cap S_k} f|\nabla \overline u_k|^2 d\mu_g = \int_{\mathbb S^2} f|\nabla \overline u|^2 d\mu_g + f(p^-) \tau(p^-)+ \sum_{i=1}^l f(y_i)m_i.
\]
The first part of item \eqref{BT35c} is immediate from the gap theorem. In that case, $l>0$ by items \eqref{BT33} and \eqref{BT32} and the fact that the $y_j$'s are in the northern hemisphere. When $l=1$, items \eqref{BT32} and \eqref{BT35b2} and the fact that $y_1$ must be in the northern hemisphere imply the result on $\tau(p^-)$. Item \eqref{BT36} follows as in Lemma \ref{BT2}. For item \eqref{BT37}, first notice that 
	\[
	E \left[  \overline{u} , \Theta_k \left( D_{2k\lambda_k}(c_k)  \right) \right]  \to 0 \quad as \quad k \to \infty.
	\]
	By item \eqref{BT34}, for each fixed $k\geq 1$ we can choose $N_k$ such that for all $n \ge N_k$,
	\[
\left|	E \left[  \overline{u}_n , \Theta_k \left( D_{2k\lambda_k} (c_k) \backslash D_{k\lambda_k}(c_k) \right) \right]  - E \left[  \overline{u} , \Theta_k \left( D_{2k\lambda_k}(c_k)  \backslash D_{k\lambda_k} (c_k)\right)  \right] \right|< \frac{1}{k}.
	\]
	Letting $n_k := \max\{n_{k-1}+1 , N_k\}$, we see that 
	\[
\lim_{k \to \infty}E \left[  \overline{u}_{n_k} , \Theta_k \left( D_{2k\lambda_k} (c_k) \backslash D_{k\lambda_k} (c_k)\right)  \right]  \to 0.
	\]
Renaming the sequence implies item \eqref{BT37}.
\end{proof}

\subsection{The bubble tree}\label{MapsSec}

Given a bubble point $x_i \in M$ from Lemma \ref{BT1}, by Lemma \ref{BT3} the maps $\overline u_{k,i}:S_{k,i} \to X$ converge to a map $\overline u_i:\mathbb S^2 \to X$ except at $\{y_{i1}, \dots, y_{il_i}, p^-\}$. The process then iterates at each $y_{ij}$ which allows us to obtain bubbles on bubbles. By Lemma \ref{BT3}, item \eqref{BT35c}, there can be at most $\min\{\Lambda/C_R, \log_2(\Lambda/\epsilon_{\mathrm{gap}})\}$ ghost bubbles. Since every non-ghost bubble contains at least $\epsilon_{\mathrm{gap}}$ energy, the process terminates. 

Prior to constructing the bubble tree, we prove two technical facts. First, given image curves $\gamma (\partial D_r)$ with small length and energy bounded by $C\delta/r$, there exists a coning off of $\gamma$ in $X$ which has energy bounded by $C\delta$. Second, the sequence of maps $u_k$ possess such curves on scale $\epsilon_k$ and $k\lambda_k$. These technical facts will be useful in the construction of our bubble tree.

\subsubsection{Coning off a curve}
We first demonstrate the existence of a coning off of a Lipschitz curve with energy depending on the energy of the curve.
\begin{definition}\label{def:coneoff}
Let $\gamma: \partial D_r \to \B_{\tau(X)}(P) \subset X$ be a Lipschitz map. We define the \emph{cone extension map} $\mathrm{Cone}(\gamma_{\partial D_r}): D_r \to X$ such that 
\[
\mathrm{Cone}(\gamma_{\partial D_r})(s,\theta) = \eta_\theta \left( \frac{s}{r} \right),
\] where $\eta_\theta:[0,1] \to X$ is the constant speed geodesic connecting $c_\gamma$ to $\gamma(\theta)$ and $c_\gamma$ is the circumcenter of $\gamma$.
\end{definition}

\begin{lemma}\label{lem:coneoff}
		Let $\left( D_{2r} , ds^2 + \mu^2(s,\theta)s^2 d\theta^2     \right)$ be a smooth disk such that $s^{-2}|1-\mu^2| + s^{-1} |\partial \mu^2| + |\partial^2 \mu^2| \leq c< \frac 14$ and $r<1$. Let $X$ be a compact locally CAT(1) space with injectivity radius $\tau(X)$. There exist $C, \delta >0$ depending on $X$ such that the following holds: for any  Lipschitz loop $\gamma: \partial D_r \to X$ with $E[\gamma, \partial D_r] < \frac{\delta}{r} $, the cone extension map $\mathrm{Cone}(\gamma_{\partial D_r}): D_r \to X$ exists and satisfies $E[\mathrm{Cone}(\gamma_{\partial D_r}),D_r] \le C\delta$. 
\end{lemma}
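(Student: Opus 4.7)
The plan is to first ensure that the cone extension is well-defined by exploiting the smallness of $\gamma$, and then split and estimate its energy directly in the $(s,\theta)$ coordinates.

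First I would convert the energy bound into a length bound. Since $\partial D_r$ has arc length comparable to $2\pi r$ (thanks to $|1-\mu^2|\leq c<1/4$), Cauchy--Schwarz together with the hypothesis $E[\gamma,\partial D_r] < \delta/r$ yields $\mathrm{length}(\gamma) \leq C_1\sqrt{\delta}$. Choosing $\delta$ small in terms of $\tau(X)$, the image of $\gamma$ then sits inside a geodesic ball of radius much less than $\tau(X)\leq \pi/4$, which is geodesically convex. This convexity guarantees that the circumcenter $c_\gamma$ exists, is unique, and satisfies $\rho := \max_\theta d(c_\gamma,\gamma(\theta)) \leq C_2\sqrt{\delta}$. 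All the geodesics $\eta_\theta$ from $c_\gamma$ to $\gamma(\theta)$ then lie inside this convex ball, so the map $\mathrm{Cone}(\gamma_{\partial D_r})$ is a well-defined Lipschitz map.

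Next I would split the energy density in the orthonormal frame $\{\partial_s,(\mu s)^{-1}\partial_\theta\}$ associated with $ds^2 + \mu^2 s^2\,d\theta^2$. The radial derivative is immediate: since $\eta_\theta$ is constant-speed of length $\ell_\theta = d(c_\gamma,\gamma(\theta)) \leq \rho$, we have $|\partial_s \mathrm{Cone}(s,\theta)| = \ell_\theta/r$, and integration against $\mu s\,ds\,d\theta$ over $D_r$ contributes at most $C\rho^2 \leq C\delta$ to $E[\mathrm{Cone},D_r]$.

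The heart of the argument, and the main obstacle, is controlling the angular derivative $|\partial_\theta \eta_\theta(s/r)|$. For this I would use CAT(1) triangle comparison with $\mathbb{S}^2$ applied to $\triangle c_\gamma\gamma(\theta)\gamma(\theta')$, which has perimeter at most $4\rho \ll \pi$. A direct application of the spherical law of cosines, both to the sub-triangle at fraction $t = s/r$ and to the full triangle, shows that for two geodesics in $\mathbb{S}^2$ of length $\leq \rho$ emanating from a common point, the distance between their points at parameter $t$ is bounded by $t\, d(\tilde\gamma(\theta),\tilde\gamma(\theta'))(1 + C_3 \rho^2)$. Pulling this comparison back to $X$ and letting $\theta'\to\theta$ yields
\[
|\partial_\theta \mathrm{Cone}(s,\theta)| \leq \tfrac{s}{r}\,|\partial_\theta \gamma(\theta)|\,(1 + C_3 \rho^2).
\]
Assembling the pieces, the angular contribution reduces, after the elementary integral $\int_0^r s\,ds = r^2/2$ and cancellation of the Jacobian factor, to $\tfrac{(1+C_3\rho^2)^2}{2}\int_0^{2\pi} \mu^{-1}|\partial_\theta\gamma|^2\,d\theta = \tfrac{(1+C_3\rho^2)^2}{2}\,r\,E[\gamma,\partial D_r] \leq \tfrac{\delta}{2} + O(\delta^2)$. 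Combined with the radial piece this gives $E[\mathrm{Cone},D_r] \leq C\delta$ for $\delta$ sufficiently small. The only nontrivial ingredient is the CAT(1) spreading estimate for a family of geodesics sharing a common endpoint; everything else is a routine coordinate computation plus Cauchy--Schwarz.
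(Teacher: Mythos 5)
Your proposal is correct and follows essentially the same route as the paper: length bound via Cauchy--Schwarz, a geodesic-spreading estimate of the form $d(\eta_{\theta}(t),\eta_{\theta'}(t))\le (1+C\delta)\,t\,d(\gamma(\theta),\gamma(\theta'))$, and then a direct radial/angular splitting of the energy density. The only difference is cosmetic: the paper cites Ohta's $L$-convexity of CAT(1) spaces for that spreading estimate (and is slightly more careful, via the first variation formula, about the fact that the exponential map in the $\partial_\theta$ direction is not exactly the coordinate circle), whereas you derive it directly from spherical triangle comparison, which amounts to the same thing.
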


\begin{proof}By the Cauchy-Schwarz inequality,
\[
\mathrm{Length}^2(\gamma) \leq 2\pi \int_0^{2\pi} \left|\frac{\partial \gamma}{\partial \theta}\right|^2 d\theta \leq 2\pi \mu(r, \theta) \delta< \frac{\tau^2(X)}2
\]for sufficiently small $\delta$.  Thus $\gamma \subset \B_{C\delta^{1/2}}(c_\gamma)$ and $\mathrm{Cone}(\gamma_{\partial D_r})(D_r) \subset \B_{C\delta^{1/2}}(c_\gamma)$. For convenience, let $u:=\mathrm{Cone}(\gamma_{\partial D_r})$.

 By the $L-$convexity of CAT(1) spaces \cite[Definition 2.6 and Proposition 3.1]{ohta}, we deduce that 
\[
d\left(  \eta_{\theta_1}(t) , \eta_{\theta_2}(t)    \right) \le \left( 1 + C \delta    \right) t\, d \left( \gamma \left( \theta_1 \right) ,  \gamma\left( \theta_2 \right) \right).
\]

Now we estimate the directional derivative $\left| u_*\left( {\partial_\theta}   \right) \right|\left( s,\theta_0 \right) $. Let $\mu := \mu (s , \theta_0)$ and let $\|  \cdot \|$ denote the distance to $0$ with respect to the metric $g = ds^2 + \mu^2s^2d\theta^2$. Then by \cite[Section 1.9]{korevaar-schoen1}, for a.e. $(s, \theta_0) \in D_r$, 
\begin{align*}
\left| u_*\left( {\partial}_{ \theta}   \right) \right|\left( s,\theta_0 \right) &=  \lim_{h \to 0}\frac{d\left( u\left( s,\theta_0 \right)  ,  u\left( \mathrm{exp}_{(s,\theta_0)}\left(h\partial_\theta \right) \right) \right) }{|h|}  \notag \\  
&\le \lim_{h \to 0} \frac{d\left( u\left( s,\theta_0 \right)  ,  u\left( \|\mathrm{exp}_{(s,\theta_0)}\left({h\partial_\theta} \right) \| , \theta_0 \right) \right)}{|h|} \notag \\ 
&+ \lim_{h \to 0} \frac{d\left( u \left(\|\mathrm{exp}_{(s,\theta_0)}\left(h\partial_\theta\right) \|,\theta_0 \right)   ,  u \left(\|\mathrm{exp}_{(s,\theta_0)}\left(h{\partial_\theta} \right) \|, \mathrm{arg}_\theta\left(  \mathrm{exp}_{(s,\theta_0)}\left(h{\partial_\theta} \right)  \right)  \right) \right) }{|h|}. \notag 
\end{align*}
As the radial geodesics are constant speed, and using the $L-$convexity estimate, we deduce that
\begin{align*}
 \left| u_*\left({\partial_\theta}   \right) \right| \left( s,\theta_0 \right) & \le \frac{d\left(c_\gamma , \gamma\left(  \theta_0 \right) \right) }r \lim_{h \to 0} \frac{\| \mathrm{exp}_{(s,\theta_0)}\left(h{\partial_\theta}\right)  \| - s}{|h|}  \notag \\ &+ \lim_{h \to 0}\left(  1 + C \delta  \right) \frac{ \| \mathrm{exp}_{(s,\theta_0)}\left(h{\partial_\theta}\right)  \| }{r}\;   \frac{d \left(   \gamma\left( \theta_0 \right) ,    \gamma \left(   \mathrm{arg}_\theta\left(  \mathrm{exp}_{(s,\theta_0)}\left(h{\partial_\theta}\right)  \right)    \right)  \right)   }{ |h|}. \notag 
\end{align*}

Since ${\partial_s}$ and ${\partial_\theta}$ are perpendicular, the first variation formula implies that
\[
    \lim_{h \to 0} \frac{\| \mathrm{exp}_{(s,\theta_0)}\left(h{\partial_\theta} \right)  \| - s}{ |h|} = 0.
\]
Thus, 
\[
\| \mathrm{exp}_{(s,\theta_0)}\left(h{\partial_\theta}\right)  \| = s + o(|h|)|h|.
\]Let $\Delta \theta(h):=  \mathrm{arg}_\theta\left(  \mathrm{exp}_{(s,\theta_0)}\left(h{\partial_\theta}\right) \right)-\theta_0$. Then,
\begin{align*}
\lim_{h \to 0} \frac{d \left( \gamma\left( \theta_0 \right) ,    \gamma \left(   \theta_0+ \Delta \theta(h) \right)    \right)  }{h} &= \lim_{h \to 0} \frac{d \left(   \gamma\left( \theta_0 \right) ,    \gamma \left(   \theta_0+ \Delta \theta(h) \right)      \right) }{\Delta \theta(h)} \cdot \lim_{h \to 0} \frac{\Delta \theta(h)}{h}\\
&= \left| \frac{d \gamma}{d\theta}   \right|_{\theta = \theta_0} \cdot \mu.
\end{align*}
Therefore, the directional derivative $\left| u_*\left( \frac{1}{\mu s} \frac{\partial}{\partial \theta}   \right) \right|$ satisfies
\begin{equation*}
 \frac{1}{\mu s}\left|u_*(\partial_\theta)\right| = \left| u_*\left( \frac{1}{\mu s}{\partial_\theta}   \right) \right|\left( s,\theta_0 \right)  \le \frac{1 + C \delta}r\left| \frac{d \gamma}{d\theta}   \right|_{\theta = \theta_0} .
\end{equation*}
Moreover, one easily calculates, using the constant speed of $\eta$,  
\[
\left|  u_*(\partial_s)   \right| \left( s , \theta_0 \right) = \frac{1}{r} d\left( c_\gamma , \gamma\left(\theta_0  \right)   \right) \le \frac{C\delta^{1/2}}{r}. 
\]
It follows that
\begin{align*}
\left| \nabla u  \right|^2\left( s , \theta_0\right) 
&=\left| u_*({\partial_s} )\right|^2(s,\theta_0)+ \frac {1}{\mu^2 s^2}  \left| u_*\left( {\partial_\theta}   \right) \right|^2(s,\theta_0)\\
&\leq \frac {C^2\delta}{r^2} + \frac{( 1 + C \delta)^2}{r^2}\left| \frac{d \gamma}{d\theta}\right|^2(\theta_0).
\end{align*}
Increasing $C$ as necessary, 
\begin{align*}
E[u,D_r] &= \int_{D_r}  \left| \nabla u  \right|^2\left( s , \theta\right) \mu(s,\theta) s ds d\theta\\
& \leq \frac{C^2\delta}{r^2}\int_{D_r} \mu(s,\theta)\, s\,ds d\theta + \frac{\left(  1 + C\delta  \right)^2}{r^2}\int_{D_r} \left|  \frac{d \gamma}{d\theta} \right|^2  \mu(s,\theta)\, s\, ds\, d\theta  \\ 
&\le   2\pi C^2 \delta(1+cr^2) + (1+C\delta)^2 (1+cr^2)^2r E[\gamma, \partial D_r]\\
& \leq C \delta.
\end{align*}
\end{proof}

\subsubsection{Curves with small length}
In subsection Section \ref{NecSec}, we prove the no-neck property and energy quantization using the isoperimetric inequality which applies to conformal harmonic maps. Thus, we want to find scales with small image length for a conformal suspension of $u_k$ which in turn implies small image length for the original maps $u_k$. 
We begin by recalling a modification of the previous conformalization scheme (see \cite[Theorem 2.3.4]{jost}). 

\begin{lemma}\label{lem:conformal-suspension-2}
Let $u: D_1 \to X$ be a harmonic map with $E\left[ u , D_1 \right] \le \Lambda$. Then, there exists a conformal harmonic map $\tilde{u}: D_1 \to X \times \mathbb{C}$ of $u$ and a universal constant $c_1>0$ such that for all $x \in D_{1/2}$, 
\begin{equation*}
	\left|   \nabla \tilde{u}  (x)\right|^2 \le \left|   \nabla u (x)\right|^2+ 1 + c_1^2 \Lambda^2.
\end{equation*}
\end{lemma}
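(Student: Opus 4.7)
The plan is to follow the classical conformal suspension construction: produce $v = v_1 + i v_2 : D_1 \to \mathbb C$ whose components are real-valued harmonic functions chosen so that the Hopf differentials cancel. Since $u$ is harmonic, $\Phi_u$ is holomorphic on $D_1$; and since any real harmonic $v_j$ on the simply connected disk satisfies $\Phi_{v_j} = 4\,((v_j)_z)^2$ with $(v_j)_z$ holomorphic, additivity of the Hopf differential on product maps gives
\[
\Phi_{\tilde u} = \Phi_u + 4\bigl((v_1)_z^2 + (v_2)_z^2\bigr) = \Phi_u + 4(a+ib)(a-ib),
\]
where $a := (v_1)_z$ and $b := (v_2)_z$. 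It therefore suffices to produce holomorphic $a, b$ on $D_1$ with $(a+ib)(a-ib) = -\Phi_u/4$.

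The algebraic trick is to set $a + ib \equiv c_0$ for a fixed constant $c_0 := 1/\sqrt 2$; then $a - ib := -\Phi_u/(4 c_0)$ is automatically holomorphic, which determines $a, b$ as holomorphic functions on $D_1$. Because $D_1$ is simply connected I may choose holomorphic primitives $G_1, G_2$ of $2a, 2b$ and define $v_j := \re G_j$. These are real harmonic with $(v_j)_z$ equal to the prescribed $a, b$, so $\tilde u := (u, v_1, v_2) : D_1 \to X \times \mathbb C$ is conformal by construction. It is also harmonic: on any small ball where $u$ minimizes into some $\overline{\mathcal B_\rho(P)} \subset X$, every competitor $\tilde w = (w_1, w_2)$ with the same trace satisfies $E[\tilde w] = E[w_1] + E[w_2] \ge E[u] + E[v] = E[\tilde u]$, since $v$ is a minimizing harmonic map into $\mathbb C$. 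The target $X \times \mathbb C$ inherits a uniform local CAT convexity radius from $X$, exactly as was used for $X \times \mathbb R$ in the proof of Theorem~\ref{RSThm}.

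For the pointwise gradient bound, a direct computation using $|\nabla v_j|^2 = 4|(v_j)_z|^2$ gives
\[
|\nabla v|^2 = 4(|a|^2 + |b|^2) = 2\bigl(|a+ib|^2 + |a-ib|^2\bigr) = 2 c_0^2 + \frac{|\Phi_u|^2}{8 c_0^2} = 1 + \frac{|\Phi_u|^2}{4}.
\]
Since $\Phi_u$ is holomorphic on $D_1$, Cauchy's mean value inequality on $D_{1/2}(z) \subset D_1$ yields, for any $z \in D_{1/2}$,
\[
|\Phi_u(z)| \le \frac{4}{\pi}\int_{D_{1/2}(z)} |\Phi_u|\, dA \le \frac{4}{\pi}\int_{D_1} |\nabla u|^2\, dA \le \frac{4 \Lambda}{\pi},
\]
where the pointwise bound $|\Phi_u| \le |\nabla u|^2$ follows from $4\pi(\partial_x,\partial_y)^2 \le 4\pi(\partial_x,\partial_x)\pi(\partial_y,\partial_y)$ (Cauchy--Schwarz for the non-negative tensorial operator $\pi$), which implies $|\Phi_u|^2 \le (\pi(\partial_x,\partial_x) + \pi(\partial_y,\partial_y))^2 = |\nabla u|^4$. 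Combining these on $D_{1/2}$, one obtains $|\nabla \tilde u|^2 = |\nabla u|^2 + |\nabla v|^2 \le |\nabla u|^2 + 1 + c_1^2 \Lambda^2$ with $c_1 = 2/\pi$.

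The only point requiring care is the algebraic step of producing holomorphic $a, b$ on all of $D_1$: writing $a^2 + b^2 = (a+ib)(a-ib)$ and choosing the first factor to be a nonzero constant avoids the standard obstruction of extracting a holomorphic square root of $-\Phi_u/4$ in the presence of odd-order zeros of $\Phi_u$. Everything else is essentially a calculation together with the mean value inequality for holomorphic functions.
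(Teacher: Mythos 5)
Your proof is correct and follows essentially the same route as the paper: the paper also factors the Hopf function by taking one of $\partial_z\nu,\ \partial_z\overline{\nu}$ to be a nonzero constant and the other proportional to $\Phi_u$ (it writes $\nu(z)=\overline z+\Psi(z)$ with $\Psi'=-\tfrac14\Phi_u$, which is your construction with the roles of the two factors swapped and $c_0=1$), and then bounds $|\Phi_u|$ on $D_{1/2}$ by the mean value property of holomorphic functions together with $|\Phi_u|\le|\nabla u|^2$. Your explicit verification of harmonicity of $\tilde u$ via the splitting of energy over the product target, and your normalization $c_0=1/\sqrt2$ making the additive constant exactly $1$, are welcome refinements but not a different argument.
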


\begin{proof}
We construct $\nu: D_1 \to \C$ to satisfy
	\begin{equation*}
		\begin{cases}
			\partial_{{z}} \overline \nu = 1 & \quad in \quad D_1,\\
			\partial_z \nu = -\frac{1}{4}\Phi_u  & \quad in \quad D_1, \\
			\Delta \nu = 0 & \quad in \quad D_1.
		\end{cases}
	\end{equation*}
To do this, let $\Psi$ be a holomorphic function with $\partial_z \Psi = -\frac{1}{4} \Phi_u$ where $\Phi_u$ is the Hopf function. Since $\Phi_u$ is holomorphic, $\nu(z) := \overline{z} + \Psi(z) $ satisfies the above conditions. Moreover, $\Phi_\nu = 4 \partial_z \nu \partial_z \overline{\nu} = -  \Phi_u$. Let $\tilde u:D_1 \to X \times \mathbb C$ such that $\tilde u(x):= (u(x), \nu(x))$. By construction, 
\begin{equation*}
	\Phi_{\tilde{u}} = \Phi_u + \Phi_\nu = \Phi_u - \Phi_u = 0
\end{equation*}and thus $\tilde u$ is conformal. 
Since $\Phi_u \in L^1\left( D_1 \right)$ is holomorphic on $D_1$, using the Cauchy integral formula there exists $c_1>0$ such that for all $x \in D_{1/2}$,  
\begin{equation*}
	\left|  \Phi_u(x)  \right| \le 4c_1 \Lambda.
\end{equation*}
Therefore, for all $x \in D_{1/2}$, 
\begin{equation*}
		\left|   \nabla \tilde{u}  \right|^2 = \left|   \nabla u \right|^2 + \left|  \nabla \nu    \right|^2 = \left|   \nabla u \right|^2 + 1 + \frac{\left|  \Phi_u  \right|^2}{16} \le \left|   \nabla u \right|^2 + 1 + {c_1^2}\Lambda^2.
\end{equation*}

\end{proof}
\begin{definition}
Henceforth we refer to the above constructed $\tilde u$ as the \emph{conformal suspension} of $u$.
\end{definition}

\begin{lemma}\label{lem:susp-area}
	Let $u$ and $\tilde{u}$ be as in Lemma~\ref{lem:conformal-suspension-2}. For any $ \Omega \subset D_{1/2}$ and any $0 < r <\frac 12$, 
		\begin{equation*}
		2 \mathrm{Area} \left[ \tilde{u} \left(\Omega \right) \right] = E\left[  \tilde{u} , \Omega   \right] \le E\left[  u ,\Omega \right] + \mathrm{Area}\left[\Omega\right] \left(  1 + {c_1^2}\Lambda^2  \right)
	\end{equation*}
and
\begin{equation*}
		\mathrm{length} \left[ \tilde{u} (\partial D_{r}) \right] \le \mathrm{length} \left[u (\partial D_{r}) \right] + \mathrm{length}[\partial D_r] \left(   1 + {c_1}\Lambda \right).
\end{equation*}Note that length and area of domain regions are taken with respect to the metric $g$. 
\end{lemma}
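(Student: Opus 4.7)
The first inequality is essentially a direct consequence of Lemma \ref{lem:conformal-suspension-2} together with the definition of energy for a conformal map. The plan is to observe that because $\tilde u$ is conformal with conformal factor $\tilde \lambda = \tfrac{1}{2}|\nabla \tilde u|^2$, by definition $E[\tilde u,\Omega] = 2\,\mathrm{Area}[\tilde u(\Omega)]$; this is exactly the equality asserted. Then I would integrate the pointwise bound $|\nabla \tilde u|^2 \le |\nabla u|^2 + 1 + c_1^2\Lambda^2$ (valid on $D_{1/2}\supset \Omega$ by Lemma \ref{lem:conformal-suspension-2}) over $\Omega$ with respect to $d\mu_g$. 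The first term yields $E[u,\Omega]$ and the constant term integrates to $\mathrm{Area}[\Omega](1 + c_1^2\Lambda^2)$, giving the stated inequality. No obstacle here beyond bookkeeping.

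For the length estimate, the key is that $\tilde u = (u,\nu)$ lands in the product metric space $X\times \mathbb{C}$, so along any curve $\gamma \subset D_{1/2}$ with unit tangent $\tau$ the infinitesimal speed satisfies
\begin{equation*}
|\tilde u_*(\tau)|^2 = |u_*(\tau)|^2 + |\nu_*(\tau)|^2,
\end{equation*}
and therefore $|\tilde u_*(\tau)| \le |u_*(\tau)| + |\nu_*(\tau)|$ by the elementary inequality $\sqrt{a^2+b^2}\le a+b$ for $a,b\ge 0$. Integrating over $\partial D_r$ with respect to arclength in $g$ gives
\begin{equation*}
\mathrm{length}[\tilde u(\partial D_r)] \le \mathrm{length}[u(\partial D_r)] + \int_{\partial D_r} |\nu_*(\tau)|\, d\sigma.
\end{equation*}

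It then remains to control $|\nu_*(\tau)|$ pointwise on $D_{1/2}$. From the construction $\nu(z) = \bar z + \Psi(z)$ with $\partial_z\Psi = -\tfrac14 \Phi_u$ and the Cauchy integral bound from the proof of Lemma \ref{lem:conformal-suspension-2}, one has $|\Phi_u(x)|\le 4c_1\Lambda$ on $D_{1/2}$, hence
\begin{equation*}
|\nabla \nu|^2 = 1 + \tfrac{|\Phi_u|^2}{16} \le 1 + c_1^2\Lambda^2 \le (1 + c_1\Lambda)^2,
\end{equation*}
so that $|\nu_*(\tau)|\le |\nabla \nu|\le 1 + c_1\Lambda$. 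Plugging this bound into the displayed inequality gives the claimed estimate. The step most likely to require care is the pointwise identity $|\tilde u_*(\tau)|^2 = |u_*(\tau)|^2 + |\nu_*(\tau)|^2$, since the Korevaar--Schoen directional energy for a map into the product $X\times \mathbb C$ must be identified with the sum of directional energies into the two factors; this is standard but should be cited explicitly from \cite{korevaar-schoen1}. Everything else is a routine consequence of the already-established bounds.
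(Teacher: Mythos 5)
Your proposal is correct and follows essentially the same route as the paper: the conformal identity $E[\tilde u,\Omega]=2\,\mathrm{Area}[\tilde u(\Omega)]$ plus integration of the pointwise gradient bound for the first claim, and the splitting $|\tilde u_*(\tau)|^2=|u_*(\tau)|^2+|\nu_*(\tau)|^2$ with $|\nu_*(\tau)|\le 1+c_1\Lambda$ for the second. If anything, your use of $\sqrt{a^2+b^2}\le a+b$ is slightly more careful than the paper's write-up, which records that step as an equality, and your remark about citing the product-space decomposition of the Korevaar--Schoen directional energy is a reasonable point of rigor.
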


\begin{proof}
	Since $\tilde{u}$ is conformal, twice its area coincides with the total energy therefore,
	\begin{align*}
		2 \mathrm{Area} \left[ \tilde{u} \left(\Omega  \right]   \right) &= E\left[  \tilde{u} , \Omega \right] = \int_{\Omega} \; \left|   \nabla \tilde{u}  \right|^2 d\mu_g\\  &\le \int_{\Omega} \;\left( \left|   \nabla u \right|^2+ 1 + {c_1^2}\Lambda^2 \right) d\mu_g\\ &= E\left[  u ,\Omega  \right] +  \mathrm{Area}\left[\Omega  \right] \left(  1 + {c_1^2}\Lambda^2  \right). 
	\end{align*}
Similarly, letting $d\sigma_r$ denote the length measure on $\partial D_r$, 
\begin{align*}
 \mathrm{length} \left[ \tilde{u} (\partial D_{r}) \right] &= \int_{\partial D_{r}} \; \left|  \partial_\theta \tilde{u}   \right| \; d\sigma_r = \int_{\partial D_{r}} \;  \left|  \partial_\theta u  \right| + \left|  \partial_\theta \nu   \right| \; d\sigma_r \\  &\le \mathrm{length} \left[ u (\partial D_{r}) \right]  + \int_{\partial D_r} \; 1 + {c_1} \Lambda  \; d\sigma_r \\  &= \mathrm{length} \left[ u (\partial D_{r}) \right] + \mathrm{length}[\partial D_r]\left(   1 + {c_1}\Lambda \right). 
\end{align*}
\end{proof}

Let $x$ be a fixed bubble point and choose $\rho_0<\frac 12$ so that $D_{2 \rho_0}(x)$ does not contain any other bubble points. Let $\tilde u_k$ denote the conformal suspension of each $u_k|_{D_{2\rho_0}(x)}$ as in Lemma \ref{lem:conformal-suspension-2}. Recall that $\epsilon_k, \lambda_k$ are chosen in Lemma \ref{BT2} and an outline of their significance is given in the paragraph preceding that lemma. The next lemma provides precise scales, comparable to $\epsilon_k$ and $k \lambda_k$, on which we can apply our cone extension lemma. These two scales will determine the boundary of the neck region.
\begin{lemma}\label{lem:length}There exist sequences $r_k \in [\epsilon_k/4, \epsilon_k/2]$ and  $s_k \in [k\lambda_k, 2 k \lambda_k]$ such that 
\[
\lim_{k \to \infty}{r_k}E[\tilde u_k, \partial D_{r_k}(c_k)] = 0,
\]
\[
\lim_{k \to \infty}s_k E[\tilde u_k, \partial D_{s_k}(c_k)] = 0.
\]As a consequence,
\[
\lim_{k \to \infty}\mathrm{length}[\tilde u_k(\partial D_{r_k}(c_k))]  =0,
\]
\[
\lim_{k \to \infty}\mathrm{length}[\tilde u_k(\partial D_{s_k}(c_k))]  =0.
\]
\end{lemma}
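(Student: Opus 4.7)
The plan is to derive both estimates by a standard Fubini/averaging argument, once one knows that the total $\tilde u_k$-energy on each of the two annuli $A_k := D_{\epsilon_k/2}(c_k)\setminus D_{\epsilon_k/4}(c_k)$ and $B_k := D_{2k\lambda_k}(c_k)\setminus D_{k\lambda_k}(c_k)$ tends to zero.

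First I would verify that the original maps $u_k$ have vanishing energy on these annuli. For $A_k$: since $|c_k|\le \epsilon_k/(2k^2)$, every point $x\in A_k$ satisfies $\epsilon_k/8\le |x|\le 2\epsilon_k$ for large $k$, so $A_k\subset D_k\setminus D_{\epsilon_k/(8k^2)}(0)$ and Lemma~\ref{BT2}(1) gives $E[u_k,A_k]\to 0$. For $B_k$: conformal invariance of energy in two dimensions together with $\overline u_k\circ \Theta_k = u_k$ identifies $E[u_k,B_k]$ with $E[\overline u_k,\Theta_k(B_k)]$, which tends to zero by Lemma~\ref{BT3}\,\eqref{BT37}. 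Lemma~\ref{lem:susp-area} then yields
\[
E[\tilde u_k,A_k]\le E[u_k,A_k]+(1+c_1^2\Lambda^2)\,\mathrm{Area}(A_k),
\]
and the analogous bound on $B_k$. Since $\mathrm{Area}(A_k)\lesssim \epsilon_k^2$ and $\mathrm{Area}(B_k)\lesssim (k\lambda_k)^2\le \epsilon_k^2/k^2$, both annular $\tilde u_k$-energies vanish in the limit.

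Next, working in polar coordinates $(r,\theta)$ centered at $c_k$ (the pulled-back metric being asymptotically Euclidean at the scales in play, with multiplicative corrections absorbed into a $1+o(1)$ factor), the circumferential boundary energy satisfies $E[\tilde u_k,\partial D_r(c_k)] = r^{-1}\int_0^{2\pi}|\partial_\theta \tilde u_k|^2\,d\theta$. Combined with the pointwise bound $|\nabla \tilde u_k|^2\ge r^{-2}|\partial_\theta \tilde u_k|^2$, this gives
\[
\int_a^b E[\tilde u_k,\partial D_r(c_k)]\,dr\le (1+o(1))\,E[\tilde u_k,\mathrm{annulus}(a,b)].
\]
Applying the mean value theorem on $[\epsilon_k/4,\epsilon_k/2]$ produces $r_k$ with $E[\tilde u_k,\partial D_{r_k}(c_k)]\le (4/\epsilon_k)(1+o(1))E[\tilde u_k,A_k]$, hence $r_k E[\tilde u_k,\partial D_{r_k}(c_k)]\le 2(1+o(1))E[\tilde u_k,A_k]\to 0$. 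The identical averaging on $[k\lambda_k,2k\lambda_k]$, of width $k\lambda_k$, produces $s_k$ with $s_k E[\tilde u_k,\partial D_{s_k}(c_k)]\le 2(1+o(1))E[\tilde u_k,B_k]\to 0$.

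Finally, the length claim follows from Cauchy--Schwarz applied to the arclength parametrization of $\partial D_r(c_k)$, giving
\[
\mathrm{length}^2[\tilde u_k(\partial D_r(c_k))]\le 2\pi r\cdot E[\tilde u_k,\partial D_r(c_k)],
\]
which we evaluate at $r=r_k$ and $r=s_k$. The only delicate step is passing from $u_k$-energy to $\tilde u_k$-energy on the two annuli, but Lemma~\ref{lem:susp-area} handles this cleanly with an error proportional to the annular area, which is negligible at both scales; the rest is the classical ``good radius'' Fubini trick, so I do not anticipate any real obstacle.
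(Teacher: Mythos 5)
Your proposal is correct and follows essentially the same route as the paper: vanishing annular energy from Lemma \ref{BT2}(1) at the outer scale and Lemma \ref{BT3}\eqref{BT37} at the inner scale, transferred to $\tilde u_k$ via Lemma \ref{lem:susp-area} with an area error, then the coarea/good-radius averaging and Cauchy--Schwarz for the lengths. The only cosmetic difference is that the paper selects $r_k$ as the minimizer of the boundary energy over the interval rather than invoking the mean value theorem, which is the same estimate.
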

\begin{proof}
As $\epsilon_k \to 0$, for each map $\tilde u_k$ we consider the metric in the tangent space $(D_{2\epsilon_k}(0), ds^2 + \mu^2_k(s,\theta)s^2 d\theta^2)$ where $s^{-2}|1- \mu_k^2|+s^{-1} |\partial \mu_k^2| + |\partial^2 \mu_k^2| \leq \alpha_k$ where $\alpha_k \to 0$. Let $r_k \in [\epsilon_k/4, \epsilon_k/2]$ such that $E[\tilde u_k, \partial D_{r_k}(c_k)] = \min_{r \in [\epsilon_k/4, \epsilon_k/2]}E[\tilde u_k, \partial D_{r}(c_k)] $. Then
\begin{align*}
\frac {\epsilon_k}4E[\tilde u_k, \partial D_{r_k}(c_k)]   &\leq \int_{\epsilon_k/4}^{\epsilon_k/2}\int_0^{2\pi} \frac 1{s \mu_k}\left|\frac{\partial \tilde u_k}{\partial \theta}\right|^2 d\theta ds   \\
&\leq E\left[ \tilde u_k, D_{\epsilon_k/2}(c_k) \backslash D_{\epsilon_k/4}(c_k) \right]\\
& \leq E\left[ u_k, D_k \backslash D_{\epsilon_k/8k^2}(0) \right] + \mathrm{Area}\left[ D_k \right] \left(1+ {c_1^2}\Lambda^2 \right) 
\end{align*}where the last inequality follows from Lemma \ref{lem:susp-area} and the fact that $ D_{\epsilon_k/2}(c_k) \backslash D_{\epsilon_k/4}(c_k)\subset D_k \backslash D_{\epsilon_k/8k^2}(0)$. By Item \eqref{BT21} of Lemma \ref{BT2} and the fact that $\mathrm{Area}\left[ D_k \right] \leq c \epsilon_k^2$, the final expression tends to zero in $k$. Since $r_k/2 \leq {\epsilon_k}/4$, the desired result follows.

To find the $s_k$'s we use item \eqref{BT37} of Lemma~\ref{BT3} in place of item \eqref{BT21} of Lemma \ref{BT2} and follow a similar reasoning as above.

The length estimates then follow immediately from Cauchy-Schwarz.
\end{proof}

\subsubsection{The base, neck, and bubble maps}

Around each bubble point $x_i \in \{x_1, \dots, x_\ell\}$, there are three domains of interest. In $D_k(x_i) = D_{2\epsilon_{k,i}}(0)\subset T_{x_i}M$, we consider the disks $D_{r_{k,i}}(c_{k,i}), D_{s_{k,i}}(c_{k,i})$ and the annulus between them 
\[
A_{k,i}':=D_{r_{k,i}}(c_{k,i})\backslash D_{s_{k,i}}(c_{k,i}).
\] 
Here $\epsilon_{k,i}, \lambda_{k,i}, c_{k,i}$ are given by Lemma \ref{BT2} and $r_{k,i}, s_{k,i}$ are given by Lemma \ref{lem:length}.

We define the \emph{neck maps} $u_{k,i}|_{ A_{k,i}'}:A_{k,i}' \to X$. To define the \emph{extended base maps}, let
\[
\underline u_k(x):= \left\{ \begin{array}{ll} u_k(x) & \text{if } x \in M \backslash \cup_{i=1}^\ell \mathrm{exp}(D_{r_{k,i}}(c_{k,i}))\\
\mathrm{Cone}(u_k|_{\partial D_{r_{k,i}}(c_{k,i})}) & \text{if } x \in \mathrm{exp}(D_{r_{k,i}}(c_{k,i})).
\end{array}\right.
\]By Lemmas \ref{BT1}, \ref{lem:length}, and \ref{lem:coneoff}, $\underline u_k \to u$ in $C^0$ uniformly on $M$ and
\[
\lim_{k \to \infty} E[\underline u_k, M] = E[u, M].
\]

Similarly, the \emph{extended bubble maps} will cone off the maps $\overline u_{k,i}$. Let $\underline{\overline u}_{k,i}:\mathbb S^2 \to X$ such that
\[
\underline{\overline u}_{k,i}(x):= \left\{ \begin{array}{ll}
\overline u_{k,i}(x) & \text{if } x \in \Theta_{k,i} (D_{s_{k,i}}(c_{k,i}))\\
\mathrm{Cone}(\overline u_{k,i}|_{\Theta_{k,i} ( \partial D_{s_{k,i}}(c_{k,i}))}(x) &\text{otherwise}.
\end{array}\right.
\]
By Lemmas \ref{BT3}, \ref{lem:length}, and \ref{lem:coneoff}, for each $i \in \{1 , \dots, \ell\}$, $\underline{\overline u}_{k,i} \to {\overline u}_{i}$ uniformly in $C^0$ on $\mathbb S^2 \backslash \{y_{i1}, \dots y_{il_i}\}$ and 
\[
\lim_{k \to \infty} E[\underline{\overline u}_{k,i} ,\mathbb S^2] = E[ {\overline u}_i,\mathbb S^2] + \sum_{j=1}^{l_i} m_{ij}.
\]
Note that the term $\tau_i(p^-)$ is lacking from the above limit and the uniform convergence happens across $p^-$. This occurs since we have removed the neck map portion from the extended bubble maps and replaced it by the coning off which has energy and diameter converging to zero. Moreover,  the energy contained in the neck maps is exactly
\[
\tau_i(p^-) = \lim_{k \to \infty} E[u_{k,i}, A_{k,i}'].
\] We will show in the next subsection that $\tau_i(p^-)=0$ and $\mathrm{diam}(u_{k,i}(A_{k,i}')) \to 0$ and thus the $C^0$ limit and the limit of the energies of the extended bubble maps are the same as the limit for the original maps. 

As mentioned previously, the extended bubble map process now iterates and we construct maps $\underline {\overline u}_{k,ij}:\mathbb S^2 \to X$ where $j \in \{1, \dots, l_i\}$. These maps converge, away from finitely many points $y_{ijm}$, $m \in \{1, \dots, l_{ij}\}$, to some map $ {\overline u}_{ij}:\mathbb S^2 \to X$ with an analogous energy limit to what we saw above. 

\subsubsection{Constructing the bubble tree}\label{BTconst}
We now construct the bubble tree and the bubble domain. The bubble tree consists of vertices and edges where each vertex represents a harmonic map and each edge represents a bubble point. The base vertex of the tree is the map $u:M \to X$ and the $\ell$ edges emanating from the base vertex are the points $x_i$. The edges $x_i$ connect to the vertices $ {\overline u}_{i}:\mathbb S^2 \to X$ and the edges emanating from each of these vertices are the bubble points $y_{ij}$, $j \in \{1, \dots, l_i\}$. The tree is finite as the process terminates. 

The bubble tower is the disjoint union of $M$ and a collection of $\mathbb S^2$'s, where each $\mathbb S^2$ is associated with a vertex in the bubble tree. Indeed, following \cite{Parker}, we may consider a \emph{bubble tower} $T$ in the following manner. Let $SM$ be an $\mathbb S^2$ bundle over $M$. Compactifying the vertical tangent space of $S M \to M$ yields an $\mathbb S^2$ bundle $S^2M$ over $SM$. Iterating this process then yields a tower of $\mathbb S^2$ fibrations. For clarity, the point $y_{i_1i_2\dots i_n}$ lies in $S^{n-1}M$.

A \emph{bubble domain} at level $n$ is a fiber $F$ of $ S^nM \to  S^{n-1}M$ and a \emph{bubble tower} is a finite union of bubble domains such that the projection of $T \cap  S^nM$ lies in $T \cap  S^{n-1}M$. 

Given the sequence $u_k:M \to X$, applying Lemmas \ref{BT1}, \ref{BT2}, \ref{BT3} determines a unique bubble tower $T= M \bigcup \left(\cup_I \mathbb S^2_I\right)$ where $I$ is indexed over all of the bubble points in the process. We define a sequence of bubble tower maps $\overline{\underline u}_{k,I}:T \to X$ such that ${\underline u}_k:M \to X$ and $\overline{\underline u}_{k,I}:\mathbb S^2_I \to X$. Letting $u, \overline {\underline u}_I$ index the limit maps, observe that
\begin{equation}
\lim_{k \to \infty}E[\overline{\underline u}_{k,I},T] = E[\overline{\underline u}_I,T]
\end{equation}and $\overline{\underline u}_{k,I} \to \overline{\underline u}_I$ in $C^0$ uniformly on $T$.

\subsection{Energy quantization and the no neck property}\label{NecSec}
In this subsection, we study the neck maps and use the isoperimetric inequality to prove that the energy of neck maps vanish in the limit. Then by monotonicity, the diameter of the neck maps must also vanish. Taken with the previous subsections, these results immediately imply Theorem \ref{MAIN}.

Consider a single neck map $u_k: A_k' \to X$ where $A_k':=D_{r_k}(c_k) \backslash D_{s_k}(c_k)$. 

\begin{lemma}[Vanishing Neck Energy and Length] The following holds:
	\[
	\limsup_{k \to \infty} E\left[ u_k, A_k' \right] =0,
	\]
	\[
	\limsup_{k \to \infty}\mathrm{diam}\left[u_k(A_k')\right] =0.
	\]
\end{lemma}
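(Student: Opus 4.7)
The plan is to apply the isoperimetric inequality of Theorem \ref{thm:isoperimetric} to the conformal suspension $\tilde u_k : A_k' \to X \times \mathbb{C}$ of $u_k$ produced by Lemma \ref{lem:conformal-suspension-2}. Since $|\nabla u_k|^2 \le |\nabla \tilde u_k|^2$ pointwise, it suffices to show $E[\tilde u_k, A_k'] \to 0$; the diameter statement will follow from an iterated version of the same argument.

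First I would verify the hypotheses of Theorem \ref{thm:isoperimetric}. By Lemma \ref{BT2}\,\eqref{BT24}, $E[u_k, A_k'] \le C_R$. The bound $C_R \le C_{\mathrm{mon}} \tau(X)^2/16$ from \eqref{eq:CR}, combined with the monotonicity argument used in the proof of Proposition \ref{GapThm}, forces the image $u_k(A_k')$ into a CAT(1) ball $\mathcal B_{\tau(X)}(P) \subset X$. The $\mathbb{C}$-coordinate $\nu_k$ of the suspension is Lipschitz with constant depending only on $\Lambda$, so on $A_k' \subset D_{2\epsilon_k}(0)$ its image has diameter $O(\epsilon_k) = o(1)$; hence $\tilde u_k(A_k')$ lies in a CAT(1) ball of $X \times \mathbb{C}$. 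By Lemma \ref{lem:susp-area},
\[
E[\tilde u_k, A_k'] \le E[u_k, A_k'] + O(\mathrm{Area}(A_k')) \le C_R + o(1) \le 2\pi/3
\]
(using $C_R \le \pi/3$), so $\mathrm{Area}[\tilde u_k(A_k')] \le \pi/3$. Theorem \ref{thm:isoperimetric} then gives
\[
E[\tilde u_k, A_k'] \le \tfrac{27\pi}{2}\,\mathrm{length}^2[\tilde u_k(\partial A_k')],
\]
and Lemma \ref{lem:length} forces both boundary image lengths to zero, yielding $E[u_k, A_k'] \to 0$.

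For the diameter, I would iterate on dyadic sub-annuli $A_k^{(j)} = D_{2^{j+1} s_k}(c_k) \setminus D_{2^j s_k}(c_k)$. Within each shell, the averaging procedure of Lemma \ref{lem:length} produces a good intermediate circle with small image length under $\tilde u_k$. Applying Theorem \ref{thm:isoperimetric} to each sub-annulus between consecutive good circles, and using the subharmonicity of $d^2(\tilde u_k(\cdot), P)$ from \cite[Lemma 4.3]{Banff1} to bound the image diameter of each piece by its boundary image diameter, one controls $\mathrm{diam}[u_k(A_k^{(j)})]$ by a quantity tending to $0$. A telescoping argument along the dyadic scales then gives $\mathrm{diam}[u_k(A_k')] \to 0$.

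The main obstacle is the verification of the CAT(1) ball hypothesis for Theorem \ref{thm:isoperimetric}, which is the precise reason the constant $C_R$ was chosen below the monotonicity threshold in \eqref{eq:CR}. The diameter argument is the more delicate part: a naive radial Cauchy--Schwarz estimate across the neck carries a divergent $\log(r_k/s_k)$ factor, and the iterated dyadic argument must compensate for this using fast enough decay of the boundary lengths at the intermediate scales.
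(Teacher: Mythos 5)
Your argument for the vanishing of the neck energy is essentially the paper's: conformal suspension, confinement of the image in a CAT(1) ball via the choice of $C_R$ below the monotonicity threshold, the isoperimetric inequality, and Lemma \ref{lem:length} to kill the boundary lengths. One point of order you should make explicit: the monotonicity formula of \cite[Theorem 3.4]{Banff2} can only be applied to a ball $\B_\delta(R_k)$ centered at an interior image point whose closure avoids the image of $\partial A_k'$; on an annulus (unlike on $\mathbb S^2$ in the Gap Theorem) this is not automatic, and the paper first uses Lemma \ref{lem:length} to place $\tilde u_k(\partial A_k')$ inside $\B_\delta(P_k)\cup\B_\delta(Q_k)$ and only then applies monotonicity at a hypothetical point $R_k$ at distance greater than $2\delta$ from both. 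So the boundary-length control must come \emph{before} the confinement step, not after.

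The diameter half of your proposal has a genuine gap. Your dyadic scheme must sum diameter contributions over roughly $\log_2(r_k/s_k)$ shells, and this ratio is unbounded by construction ($\epsilon_k/(k\lambda_k)\to\infty$). Choosing a good circle in each shell $A_k^{(j)}$ by averaging gives $\mathrm{length}[\tilde u_k(\partial D_{\rho_j})]^2 \lesssim E[\tilde u_k, A_k^{(j)}]$, and the telescoped diameter is then bounded by $\sum_j E[\tilde u_k, A_k^{(j)}]^{1/2} \lesssim \bigl(\log(r_k/s_k)\cdot E[\tilde u_k, A_k']\bigr)^{1/2}$ by Cauchy--Schwarz; since you have no rate for $E[\tilde u_k, A_k']\to 0$ against the diverging logarithm, the sum is not controlled. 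You identify this obstacle yourself but supply no mechanism to overcome it. The paper avoids the issue entirely: once $\mathrm{Area}[\tilde u_k(A_k')]\to 0$ is known from the first half, one fixes an arbitrary $\delta>0$, takes $k$ large enough that $\mathrm{Area}[\tilde u_k(A_k')]<C_{\mathrm{mon}}\delta^2/2$ and $\tilde u_k(\partial A_k')\subset \B_\delta(P_k)\cup\B_\delta(Q_k)$, and reruns the same monotonicity argument: any image point outside $\B_{2\delta}(P_k)\cup\B_{2\delta}(Q_k)$ would force area at least $C_{\mathrm{mon}}\delta^2$, a contradiction, whence $\tilde u_k(A_k')\subset \B_{4\delta}(P_k)$. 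No decomposition of the neck is needed; the small area does all the work in one step.
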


\begin{proof} Let $\tilde u_k$ denote the conformal suspension of each $u_k|_{D_{r}(x)}$ as in Lemma \ref{lem:conformal-suspension-2}. 
By \eqref{eq:CR} and Lemma \ref{lem:susp-area}, for all sufficiently large $k$,  $\mathrm{Area}\left[\tilde{u}_k(A_k')  \right]< \frac \pi 3$.

By Lemma \ref{lem:length}, for any $0<\delta\leq \tau(X)/4$ there exists a $K$ such that for all $k \geq K$ there exist points $P_k, Q_k \in X \times \mathbb C$ such that $\tilde u_k(\partial A_k') \subset \B_\delta(P_k) \cup \B_\delta(Q_k)$. Now suppose that there exists $R_k \in \tilde u_k(A_k')$ such that $R_k \notin 
\B_{2\delta}(P_k) \cup \B_{2\delta}(Q_k)$. Then, applying the monotonicity formula of \cite[Theorem 3.4]{Banff2} to $\tilde u_k(A_k') \cap \B_\delta(R_k)$, 
\[
 C_{\mathrm{mon}} \delta^2 \leq \mathrm{Area}\left[\tilde u_k(A_k')\right] .
\]On the other hand, by Lemma \ref{lem:susp-area}, using the fact that $A_k' \subset D_{\epsilon_k}(0)\backslash D_{\lambda_k}(c_k)$, and recalling the definition of $C_R$ from \eqref{eq:CR}, 
\[
2\mathrm{Area}\left[\tilde u_k(A_k')\right] \leq E[u_k, D_{\epsilon_k}(0)\backslash D_{\lambda_k}(c_k)]+ \mathrm{Area}[A_k'](1+c_1^2\Lambda^2)\leq C_{\mathrm{mon}}\frac{\tau^2(X)}{16}+ \frac {C}{k^2}(1+c_1^2\Lambda^2).\]
This implies a contradiction for $\delta = \tau(X)/4$ and $k$ sufficiently large. It follows that for $k$ large enough, $\tilde u_k( A_k')  \subset \B_{\tau(X)}(P_k)$.  Thus each $\tilde u_k:A_k' \to X$ satisfies the hypotheses of the isoperimetric inequality, Theorem \ref{thm:isoperimetric}. By Lemma \ref{lem:length}, it thus follows that
\[
E\left[ u_k, A_k' \right] \leq E\left[ \tilde u_k,A_k' \right] = 2\mathrm{Area}\left[\tilde u_k(A_k')\right] \leq \frac{27\pi}2 \mathrm{length}^2\left[\tilde u_k(\partial A_k')\right]\to 0.
\]

With this improvement on the area estimate, for any fixed $\delta>0$ we may choose $N$ large enough so that for all $k \geq N$,
$\mathrm{Area}\left[\tilde u_k(A_k')\right] < C_{\mathrm{mon}}\delta^2/2$ and there exist points $P_k, Q_k \in X \times \mathbb C$ such that $\tilde u_k(\partial A_k') \subset \B_\delta(P_k) \cup \B_\delta(Q_k)$. If there exists $R_k \in \tilde u_k(A_k')$ such that $R_k \notin 
\B_{2\delta}(P_k) \cup \B_{2\delta}(Q_k)$ then by the same argument as above, the monotonicity formula implies a contradiction. Therefore, $\tilde u_k(A_k') \subset \B_{4\delta}(P_k)$. It follows that
\[
\lim_{k \to \infty} \mathrm{diam}\left[u_k(A_k')\right] \leq \lim_{k \to \infty} \mathrm{diam}\left[\tilde u_k(A_k')\right]=0.
\]
\end{proof}


\end{document}